\documentclass[10pt]{article}
\usepackage{amsfonts}
\usepackage{mathrsfs,enumerate,amssymb,amsmath,color,lineno}
\usepackage{indentfirst}
\usepackage{amsmath}
\usepackage{amssymb}
\usepackage[amsmath,thmmarks]{ntheorem}
\usepackage{graphicx}
\usepackage{tikz}
\usepackage{caption}
\usepackage{subcaption}
\usepackage{caption}
\usepackage{algorithm}
\usepackage{algpseudocode}
\makeatletter
\@addtoreset{algorithm}{section}
\makeatother
\usepackage{booktabs}
\usepackage{hyperref}

\newtheorem{definition}{\bf Definition}[section]
\newtheorem{lemma}{\bf Lemma}[section]
\newtheorem{theorem}{\bf Theorem}[section]
\newtheorem{remark}{\bf Remark}[section]
\newtheorem{corollary}{\bf Corollary}[section]
\newtheorem{example}{\bf Example}[section]

\newtheorem{proposition}{\bf Proposition}[section]

\begin{document}
\setcounter{page}{1}

\title{{\textbf{Bounds of triangular subnorms and their algorithms}}\thanks {Supported by
the National Natural Science Foundation of China (No.12471440)}}
\author{Ting Tang\footnote{\emph{E-mail address}: Tangting199099@cwnu.edu.cn}, Xue-ping Wang\footnote{Corresponding author. xpwang1@hotmail.com; fax: +86-28-84761502},\\
\emph{(School of Mathematical Sciences, Sichuan Normal University,}\\
\emph{Chengdu 610066, Sichuan, People's Republic of China)}}

\newcommand{\pp}[2]{\frac{\partial #1}{\partial #2}}
\date{}
\maketitle
\begin{quote}
{\bf Abstract} This article deals with the upper and lower bounds of triangular subnorms generated by continuous, strictly decreasing additive generators. It first establishes necessary and sufficient conditions for the comparability of such triangular subnorms. It then explores the existence of strict (resp. nilpotent) bounds of a finite family of strict (resp. nilpotent) triangular subnorms generated by continuous, strictly decreasing additive generators. By duality, completely analogous results are derived for triangular superconorms generated by continuous, strictly increasing additive generators. In particular, it supplies the corresponding algorithms for computing those bounds, which are illustrated by several examples.

{\textbf{\emph{Keywords}}:} Triangular subnorm; Comparison of triangular subnorms, Additive generator; Upper bound; Algorithm\\
\end{quote}

\section{Introduction}
Triangular norms (t-norms for short), introduced by Schweizer and Sklar \cite{BS1960} in order to generalize the triangle inequality towards probabilistic metric spaces, are indispensable tools not only in several other areas of mathematics such as many-valued logics, fuzzy set theory, non-additive measure and integral theory, but also in many branches of information science such as fuzzy control, neural networks and multi-criteria decision
aid (see e.g. \cite{PH1998,LS2008,HN2006,MO2003,EP1995,MS1985}).

Before Jenei \cite{SJ2001} who introduced triangular subnorms (t-subnorms, in brief), most research focuses on t-norms. From the practical point of view in information theory, the strict boundary condition of t-norms, which can be treated as a perfect interaction with certain information, is overly restrictive. Therefore, relaxing this condition may enable us to discount the effects in aggregation, but providing a more flexible aggregative framework. T-subnorms exactly formalizes this idea, extending t-norm theory to accommodate a broader range of applications. On the other hand, T-subnorms also play a crucial role in the ordinal-sum based construction of left-continuous t-norms \cite{SJ2002}, as well as in other construction methods \cite{SJ2005}. One can even associate with each t-norm a (usually infinite) set of t-subnorms \cite{KCM2009}, conversely, redefining the boundary values of a t-subnorm always yields a t-norm \cite{EP2000}. In particular, Urba\'{n}ski and Wasowski \cite{MK2005} investigated boundary weak t-norms in the framework of fuzzy arithmetics to sum up fuzzy numbers and Ouyang \cite{YO2007} further provided their construction methods based on additive generators, and see \cite{RG2004,RG2014,KC2009,AM2004} for more details on t-subnorms and their applications.

In some application contexts of t-subnorms such as fuzzy number operations, fuzzy partition and fuzzy equivalence relations, the following practical issue arises: Given a finite family of t-subnorms belonging to a specific class, one needs to find their upper and lower bounds within the same class. This problem becomes more challenging when the supremum or infimum is sought. Till now, these problems remain open except some cases \cite{VM1999,VM2001}. In this article, we focus on the upper and lower bounds of t-subnorms generated by continuous, strictly decreasing additive generators. Note that, unlike continuous Archimedean t-norms, continuous Archimedean t-subnorms are not necessarily generated by additive generators as shown by Example 4.2 of \cite{RG2014}. Meanwhile, a continuous Archimedean t-subnorm with a continuous, strictly decreasing additive generator may have additive generators that are neither continuous nor strictly decreasing as illustrated by Example 2.13 (iv) in \cite{HL2023}. Some continuous Archimedean t-subnorms even don't admit continuous additive generator as shown by Example 4 of \cite{AM2004}. On the other hand, it is well-known that a continuous Archimedean t-norm is either strict or nilpotent, but continuous Archimedean t-subnorms aren't restricted to these two categories. Indeed, there exist continuous Archimedean t-subnorms that are neither strict nor nilpotent, as illustrated by the following example.
\begin{example}\label{exp2.2}\emph{
Let $S:[0,1]^{2}\rightarrow[0,1]$ be given by
\begin{equation*}
S(x,y)=
\begin{cases}
xy & \hbox{if }\ (x,y)\in[0,0.5]^{2},\\
0.25 & \hbox{if }\ (x,y)\in[0.5,1]^{2},\\
0.5{\mbox{min}(x,y)}& \hbox{otherwise.}\,
\end{cases}
\end{equation*}
Then $S$ is a continuous, Archimedean proper t-subnorm that is neither nilpotent nor strict (see Fig. \ref{fig1}).}
\end{example}
\begin{figure}[!h]
\centering
\includegraphics[width=1\textwidth, keepaspectratio]{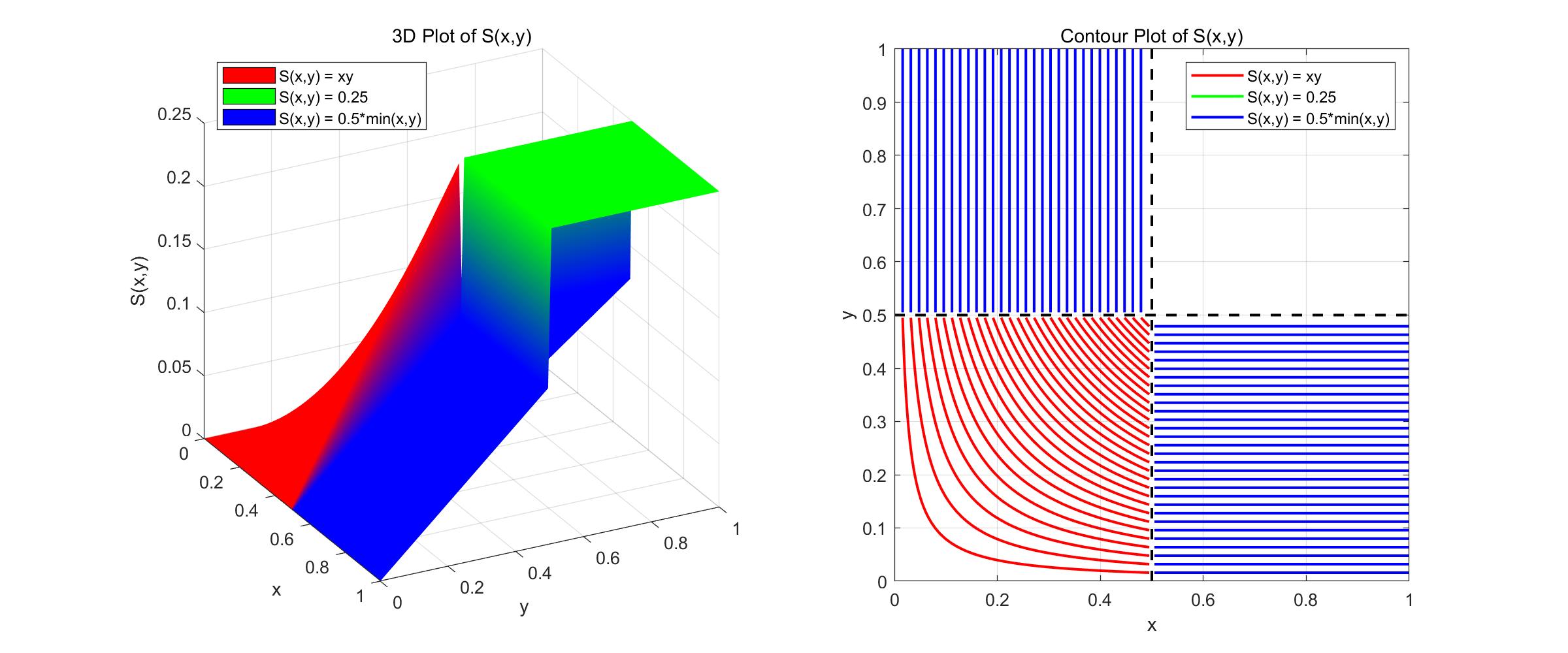}
\caption{3D plot (left) and contour plot of the t-subnorm in Example \ref{exp2.2}.}
\label{fig1}
\end{figure}
However, Proposition 2.14 of \cite{HL2023} shows that a t-subnorm $S$ generated by a continuous, strictly decreasing additive generator is necessarily continuous and Archimedean and, $S$ is either strict or nilpotent.
It is worth pointing out that for two t-subnorms $S_{1}:[0,1]^2\rightarrow[0,1]$ and $S_{2}:[0,1]^2\rightarrow[0,1]$ with continuous, strictly decreasing additive generators, it is trivial that the minimum t-norm $T_{M}:[0,1]^2\rightarrow[0,1]$ with $T_{M}(x,y)=\mbox{min}(x,y)$ is an upper bound of both $S_{1}$ and $S_{2}$, and the zero t-subnorm $Z:[0,1]^2\rightarrow[0,1]$ with $Z(x,y)=0$ is a lower bound of both $S_{1}$ and $S_{2}$, i.e.,
$$Z\leq \mbox{min}(S_{1},S_{2})\leq \mbox{max}(S_{1},S_{2})\leq T_{M}.$$
Therefore, in what follows we mainly consider the following interesting problem: Given two t-subnorms $S_{1}$ and $S_{2}$ with continuous, strictly decreasing additive generators, can we find two respective non-trivial bounds $S_{u}$ and $S_{l}$ with continuous, strictly decreasing additive generators such that
$$S_{l}\leq \mbox{min}(S_{1},S_{2})\leq \mbox{max}(S_{1},S_{2})\leq S_{u}?$$

The rest of this article is organized as follows. In Section 2, we recall some basic concepts and results concerning t-norms and t-subnorms, respectively. In Section 3, we explore the comparison of t-subnorms generated by continuous, strictly decreasing additive generators and establish the necessary and sufficient condition for such comparisons. In Section 4, we demonstrate the existence of a strict (resp. nilpotent) upper bound for any two strict (resp. nilpotent) t-subnorms generated by continuous, strictly decreasing additive generators, and provide a corresponding algorithm to compute such an upper bound. In Section 5, we further discuss the existence of a strict (resp. nilpotent) lower bound for two such strict (resp. nilpotent) t-subnorms, along with algorithm to compute it. A conclusion is drawn in Section 6.

\section{Preliminaries}
In this section, we recall some definitions and results about t-norms and t-subnorms which will be used in the sequel.

A binary operation $T:[0,1]^{2}\rightarrow[0,1]$ is a $t$-$norm$ if it is commutative, associative, non-decreasing in both variables and 1 is its neutral element. A binary operation $C:[0,1]^{2}\rightarrow[0,1]$ is a $t$-$conorm$ if it is commutative, associative, non-decreasing in both variables and 0 is its neutral element \cite{EP2000}.

The duality between t-norms and t-conorms is expressed by the fact that from any t-norm $T$ we can obtain its dual t-conorm $C$ by the equation $C(x,y)=1-T(1-x,1-y)$ and vice versa.

\begin{definition}[\cite{EP2000}]\label{def:2.3}
 \emph{A t-norm $T:[0,1]^{2}\rightarrow[0,1]$ is said to be
 \begin{enumerate}
\renewcommand{\labelenumi}{(\roman{enumi})}
  \item \emph{Archimedean} if for each $(x,y)\in(0,1)^2$ there is an $n\in \mathbb{N}=\{1,2,\cdots, n,\cdots\}$ with $x_{T}^{(n)}<y$, where $x_{T}^{(1)}=x$ and $x_{T}^{(n+1)}=T(x,x_{T}^{(n)})$;
  \item \emph{strict} if it is continuous and $T(x,y)<T(x,z)$ whenever $x>0$ and $y<z$;
  \item \emph{nilpotent} if it is continuous and for each $x\in(0,1)$ there exists some $n\in\mathbb{N}$ such that $x_{T}^{(n)}=0$.
\end{enumerate}}
\end{definition}

It is well known that each continuous Archimedean t-norm (resp. t-conorm) is either strict or nilpotent.

\begin{definition}[\cite{EP1999,EP2000}]\label{def2.4}
\emph{Let $a, b, c, d\in [-\infty, \infty]$ with $a<b, c<d$ and $f:[a,b]\rightarrow[c,d]$ be a non-increasing function. Then the function $f^{(-1)}:[c,d]\rightarrow[a,b]$ defined by
\begin{equation*}
f^{(-1)}(y)=\sup\{x\in [a,b]\mid f(x)>y\}
\end{equation*}
is called a \emph{pseudo-inverse} of the non-increasing function $f$. In particular, $\sup\emptyset=a$.}
\end{definition}
%\begin{definition}
%\emph{An \emph{additive generator} $t :[0,1]\rightarrow [0,\infty]$ of a t-norm $T$ is a strictly decreasing function which is also right-continuous in $0$ and satisfies $t(1) = 0$, such that for all $(x,y)\in [0,1]^2$ we have
%\begin{equation*}
%\label{eq:1.7}
%t(x)+t(y)\in \mbox{Ran}(t)\cup [t(0),\infty],
%\end{equation*}
%\begin{equation*}
%\label{eq:1.8}
% T(x,y)= t^{(-1)}(t(x)+t(y)).
%\end{equation*}}
%\end{definition}

The following is the representation theorem of continuous Archimedean t-norms.
\begin{theorem}[\cite{EP2000}]\label{theorem:2.1}
For a function $T:[0,1]^{2}\rightarrow[0,1]$ the following are equivalent:
\begin{enumerate}
\renewcommand{\labelenumi}{(\roman{enumi})}
\item $T$ is a continuous Archimedean t-norm.
\item $T$ has a continuous additive generator, i.e., there exists a continuous, strictly decreasing function $t:[0,1]\rightarrow[0,\infty]$ with $t(1)=0$, which
is uniquely determined up to a positive multiplicative constant, such that for all $(x,y)\in[0,1]^{2}$
\begin{equation*}
T(x,y) = t^{(-1)}(t(x)+t(y)).
\end{equation*}
\end{enumerate}
\end{theorem}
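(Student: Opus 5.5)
The plan is to prove the two implications of Theorem \ref{theorem:2.1} separately. The direction (ii)$\Rightarrow$(i) is the routine one. The direction (i)$\Rightarrow$(ii) carries the real content, namely constructing the generator and proving its uniqueness.

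\emph{(ii)$\Rightarrow$(i).} Let $t$ be a continuous, strictly decreasing generator with $t(1)=0$ and $T(x,y)=t^{(-1)}(t(x)+t(y))$. Since $t$ is continuous and strictly decreasing, its pseudo-inverse is simply $t^{(-1)}(u)=t^{-1}(\min(u,t(0)))$, which is continuous and non-increasing on $[0,\infty]$.
\begin{enumerate}
\item Commutativity is immediate.
\item Monotonicity follows by composing a non-increasing map with a sum of non-increasing maps.
\item The neutral element is $1$, because $t(1)=0$ gives $T(x,1)=t^{-1}(t(x))=x$.
\item Continuity holds because $T$ is a composition of continuous maps.
\item For associativity, both $T(T(x,y),z)$ and $T(x,T(y,z))$ equal $t^{-1}(\min(t(x)+t(y)+t(z),t(0)))$. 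To see this, note that truncating at $t(0)$ before adding $t(z)\ge 0$ does not change the truncation afterwards.
\item For the Archimedean property, take $x,y\in(0,1)$. Then $x_T^{(n)}=t^{-1}(\min(nt(x),t(0)))$. Since $t(x)>0$, the value $nt(x)$ eventually exceeds $t(y)$, so $x_T^{(n)}<y$.
\end{enumerate}

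\emph{(i)$\Rightarrow$(ii).} I would follow the classical Ling/Acz\'el construction. Fix $a\in(0,1)$.
\begin{enumerate}
\item Define the powers $a^{(n)}=a_T^{(n)}$. Then define $a^{(1/m)}$ as the unique $b$ with $b_T^{(m)}=a$. Existence comes from continuity and the intermediate value theorem applied to $b\mapsto b_T^{(m)}$, which runs from $0$ to $1$.
\item Uniqueness of this root requires strict monotonicity of $b\mapsto b^{(m)}$ on the region where the value is positive. This follows from the Archimedean property together with continuity: any continuous Archimedean t-norm satisfies $T(x,y)<x$ for $x,y\in(0,1)$, and is cancellative on the positivity region, i.e.\ $T(x,y)=T(x,z)>0$ implies $y=z$.
\item Set $a^{(p/q)}$ for positive rationals and check that it is well defined and additive: $T(a^{(r)},a^{(s)})=a^{(r+s)}$, truncated at $0$.
\item Show that $r\mapsto a^{(r)}$ is strictly decreasing while positive and continuous. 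Extend it by continuity to a map $\varphi:[0,\infty)\to[0,1]$ with $\varphi(0)=1$, and define $t=\varphi^{-1}$ on $(0,1]$.
\item Put $t(0)=\lim_{x\to0}t(x)$. This limit is either $\infty$ (the strict case) or finite (the nilpotent case, where $\varphi$ hits $0$ at a finite point).
\item The representation $T(x,y)=t^{(-1)}(t(x)+t(y))$ then holds on the dense set of rational powers by additivity, and everywhere by continuity.
\end{enumerate}

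\emph{Uniqueness up to a positive constant.} Suppose $t_1,t_2$ both generate $T$. Then $h=t_2\circ t_1^{-1}$ satisfies Cauchy's equation $h(u+v)=h(u)+h(v)$ on the relevant interval, and $h$ is continuous and increasing. Hence $h(u)=cu$ for some $c>0$.

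\emph{Main obstacle.} The hardest step is establishing cancellativity and strict monotonicity of powers on the positive region. Everything else rests on this step: both the well-definedness of the $m$-th roots and the density argument depend on it. My first attempt would be by contradiction. Suppose $T(x,y)=T(x,z)$ with $y<z$ and a positive value. Continuity then gives $w$ with $T(z,w)=y$. Iterating yields $T(x,z)=T(x,z,w^{(n)})$ for all $n$, which tends to $0$ by the Archimedean property. This contradicts positivity.
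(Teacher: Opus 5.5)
The paper does not prove this statement. It quotes it from \cite{EP2000}, so there is no in-paper argument to compare yours with. Judged on its own, your plan is the classical Ling-type construction via rational $T$-powers, and it is sound.

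The direction (ii)$\Rightarrow$(i) is complete as written. The formula $t^{(-1)}(u)=t^{-1}(\min(u,t(0)))$ is correct, as are the associativity identity and $x_T^{(n)}=t^{-1}(\min(nt(x),t(0)))$. This easy direction is also the only one the paper actually invokes: in the proof of Proposition~\ref{pro4.3} it is used to see that $t_i=s_{n_i}-1$ generates a continuous Archimedean t-norm. Your cancellation argument is also correct. From $c=T(x,z)=T(x,T(z,w))=T(c,w)$ you get $c=T(c,w_T^{(n)})\le w_T^{(n)}\to 0$, and $w\in(0,1)$ holds because $0<y<z$.

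The one substantive step you state without argument is continuity of $r\mapsto a^{(r)}$, together with $\varphi(0)=1$ and $\varphi(r)\to 0$. This is a second place where the Archimedean property is genuinely needed, besides cancellativity. It can be filled as follows.
\begin{itemize}
\item Since $r\mapsto a^{(r)}$ is non-increasing, $a^{(1/q)}$ is non-decreasing in $q$; let $L$ be its limit. If $L<1$, then $a=(a^{(1/q)})_T^{(q)}\le L_T^{(q)}\to 0$, a contradiction. Hence $a^{(\varepsilon)}\to 1$ as rational $\varepsilon\downarrow 0$, which gives $\varphi(0)=1$.
\item For rationals $s<r<s'$ with $s'-s\to 0$, write $a^{(s')}=T(a^{(s)},a^{(s'-s)})$. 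Continuity of $T$ then forces the left and right limits at $r$ to coincide, so there are no jumps.
\item $a_T^{(n)}\to 0$ makes $\varphi$ map onto $(0,1]$, so $t=\varphi^{-1}$ is defined on all of $(0,1]$.
\end{itemize}
With these filled in, the rest goes through: the representation, the dichotomy $t(0)=\infty$ versus $t(0)<\infty$, and uniqueness. For uniqueness, the restricted Cauchy equation for $h=t_2\circ t_1^{-1}$ holds for $u+v<t_1(0)$. There $T(x,y)>0$, which forces $t_2(T(x,y))=t_2(x)+t_2(y)$.
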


In Theorem \ref{theorem:2.1}, if $t(0)=\infty$ then $t^{(-1)}$ is the inverse of $t$ and $T$ is strict. If $t(0)<\infty$ then $T$ is nilpotent.

Next we recall the definitions of t-subnorm and its dual t-superconorm, respectively.
\begin{definition}[\cite{SJ2001}]\label{def2.5}
 \emph{A binary operation $S:[0,1]^2\rightarrow[0,1]$ is a $t$-$subnorm$ if it is commutative, associative, non-decreasing in both variables and $S(x,y)\leq \min(x,y)$ for all $(x,y)\in[0,1]^2$.}
\end{definition}

\begin{definition}[\cite{AM2016}]\label{def2.6}
\emph{A binary operation $M:[0,1]^2\rightarrow[0,1]$ is a $t$-$superconorm$ if it is commutative, associative, non-decreasing in both variables and $M(x,y)\geq \max(x,y)$ for all $(x,y)\in[0,1]^2$.}
\end{definition}

Between t-subnorms and t-superconorms there is the same dual relation as between t-norms and t-conorms. Obviously, each t-norm (resp. t-conorm) is also a t-subnorm (resp. t-superconorm). A t-subnorm is said to be \emph{proper} if it is not a t-norm (that is, there exists some $x\in[0,1]$ such that $S(x,1)<x$). In particular, Mesiarov\'{a} \cite{AM2004} proved that a continuous t-subnorm $S$ is proper if and only if $S(1,1)<1$.

The zero t-subnorm $Z$, where $Z(x,y)=0$ for all $(x,y)\in[0,1]^{2}$, is the weakest t-subnorm, i.e., $Z\leq S\leq T_{M}$ for an arbitrary  t-subnorm $S$. In the following, every t-subnorm $S$ with $S\neq Z$ is said to be non-constant.

The following definition is completely analogous to Definition \ref{def:2.3}.
\begin{definition}\label{def2.7}
 \emph{A t-subnorm $S:[0,1]^2\rightarrow[0,1]$ is said to be
 \begin{enumerate}
\renewcommand{\labelenumi}{(\roman{enumi})}
  \item \emph{Archimedean} \cite{AM2016} if for all $x, y\in(0,1)$ there exists an $n\in\mathbb{N}$ such that $x_{S}^{(n)}<y$, where $x_{S}^{(1)}=x$ and $x_{S}^{(n+1)}=S(x,x_{S}^{(n)})$;
  \item \emph{strict} if it is continuous and $S(x,y)<S(x,z)$ whenever $x>0$ and $y<z$;
  \item \emph{nilpotent} if it is continuous and for each $x\in(0,1)$ there exists some $n\in\mathbb{N}$ such that $x_{S}^{(n)}=0$.
\end{enumerate}}
\end{definition}

Observe that the fact that $1$ is the neutral element of t-norms forces the strict monotonicity of any additive generator $t:[0,1]\rightarrow[0,\infty]$ of a t-norm as well as $t(1)=0$. However, for t-subnorms this restriction can be relaxed as follows.
\begin{definition}[\cite{AM2004}]\label{def2.8}
\emph{A non-increasing function $s:[0,1]\rightarrow[0,\infty]$ is called an \emph{additive generator} of a t-subnorm $S:[0,1]^2\rightarrow[0,1]$ if for all $(x,y)\in[0,1]^2$
\begin{equation}\label{eq:1}
S(x,y)=s^{(-1)}(s(x)+s(y)).
\end{equation}}
\end{definition}

Obviously, a continuous and strictly decreasing function $s:[0,1]\rightarrow[0,\infty]$ is an additive generator of a t-subnorm through Eq.\eqref{eq:1}. Moreover, one can easily see that the following basic properties hold.
\begin{proposition}[\cite{HL2023}]\label{pro2.1}
Let $S$ be a t-subnorm with a continuous, strictly decreasing additive generator $s$. Then
\renewcommand{\labelenumi}{(\roman{enumi})}
\begin{enumerate}
\item $S$ is both continuous and Archimedean.
\item $S$ is proper if and only if $s(1)>0$, and $S$ is a t-norm if and only if $s(1)=0$.
\item $S$ is strict if and only if $s(0)=\infty$, and $S$ is nilpotent if and only if $s(0)<\infty$. Hence, $S$ is either strict or nilpotent.
\item $S= Z$ if and only if $s(0)\leq2s(1)$.
\item For all constant $c>0$, $c\cdot s$ is also an additive generator of $S$.
\end{enumerate}
\end{proposition}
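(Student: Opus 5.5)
We prove the five items of Proposition \ref{pro2.1} one at a time. Everything comes from the explicit form of the pseudo-inverse of a continuous, strictly decreasing $s:[0,1]\to[0,\infty]$. Put $a=s(1)\ge 0$ and $b=s(0)\in(a,\infty]$. Then $s$ is a homeomorphism of $[0,1]$ onto $[a,b]$, and Definition \ref{def2.4} gives
\[
s^{(-1)}(y)=1 \text{ for } y<a,\qquad s^{(-1)}(y)=s^{-1}(y) \text{ for } a\le y\le b,\qquad s^{(-1)}(y)=0 \text{ for } y\ge b .
\]
Since $s(x)+s(y)\ge 2a\ge a$, only the last two branches ever occur in Eq.~\eqref{eq:1}. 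Hence
\[
S(x,y)=s^{-1}\big(\min(s(x)+s(y),\,b)\big),
\]
where $s^{-1}(b)=0$, and when $b=\infty$ this is simply $s^{-1}(s(x)+s(y))$.

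With this formula, the t-subnorm axioms and item (i) are routine.
\begin{itemize}
\item Continuity: $S$ is a composition of continuous maps, truncation at $b$ included. When $b=\infty$, continuity at points with $x=0$ or $y=0$ follows because $s(x)\to\infty$ forces $s^{-1}\to 0$.
\item Archimedean property: $x_S^{(n)}=s^{-1}(\min(n\,s(x),b))$. For $x\in(0,1)$ we have $s(x)>a\ge 0$, so $n\,s(x)\to\infty$. Therefore $x_S^{(n)}$ eventually drops below any $y\in(0,1)$; if $b<\infty$ it even reaches $0$.
\end{itemize}

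For item (ii), $S(x,1)=s^{-1}(\min(s(x)+a,b))$. If $a=0$ this equals $x$, so $1$ is neutral and $S$ is a t-norm. If $a>0$, then for $x=1$ we get $S(1,1)=s^{-1}(\min(2a,b))<1$, because $\min(2a,b)>a$. So $S$ is proper. The two cases are exhaustive and exclusive, which gives both equivalences.

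For item (iii), the computation of $x_S^{(n)}$ above shows that $b<\infty$ implies nilpotency. Now suppose $b=\infty$.
\begin{itemize}
\item $S(x,y)=s^{-1}(s(x)+s(y))>0$ whenever $x,y>0$, so $S$ is not nilpotent.
\item Strict monotonicity: for $x>0$ and $y<z$ we have $s(y)>s(z)$, and $s(x)<\infty$, so the sums differ strictly and $s^{-1}$ is strictly decreasing.
\end{itemize}
Conversely, if $b<\infty$ then strictness fails, since $S(x,y)=0$ for small positive $x,y$. This gives the dichotomy.

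For item (iv), $S\equiv Z$ iff $\min(s(x)+s(y),b)=b$ for all $x,y$. The smallest possible sum is $2a$, attained at $x=y=1$, so $S\equiv Z$ iff $2a\ge b$.

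For item (v), $(cs)^{(-1)}(cy)=s^{(-1)}(y)$ directly from the definition of the pseudo-inverse. Therefore $(cs)^{(-1)}\big(cs(x)+cs(y)\big)=s^{(-1)}\big(s(x)+s(y)\big)$, so $c\,s$ generates the same $S$.

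The main obstacle is verifying that $S$ is actually a t-subnorm, specifically associativity, because of the truncation. First, $S(x,y)\le\min(x,y)$ holds since $\min(s(x)+s(y),b)\ge s(x)$ and $s^{-1}$ is decreasing. For associativity, note that
\[
s(S(x,y))=\min(s(x)+s(y),b).
\]
Consequently
\[
s\big(S(S(x,y),z)\big)=\min\big(\min(s(x)+s(y),b)+s(z),\,b\big)=\min\big(s(x)+s(y)+s(z),\,b\big).
\]
This expression is symmetric in $x,y,z$, which yields associativity. Commutativity is immediate, and monotonicity follows from composing monotone maps.
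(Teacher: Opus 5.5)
Your proof is correct. The paper gives no proof of its own: it imports the proposition from \cite{HL2023} with the remark that the properties are easily seen, so there is no competing argument to compare against.

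Your reduction to the truncated formula $S(x,y)=s^{-1}(\min(s(x)+s(y),s(0)))$ is the natural route. It is also exactly the form the paper itself works with later, in Eq.~\eqref{eq:3} in the proof of Theorem~\ref{theorem:3.1}. With that formula in hand, each of (i)--(v) follows as you describe.

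Two small remarks:
\begin{itemize}
\item Since $S$ is assumed to be a t-subnorm, your verification of the axioms (including associativity) is not needed for the proposition as stated. It does, however, justify the paper's unproved sentence after Definition~\ref{def2.8}.
\item In (iii) you could make explicit why strictness fails when $b<\infty$. Because $b>a\ge 0$ forces $b>0$, we have $s(x)+s(y)\to 2b>b$ near the origin. Hence $S(x,0)=0=S(x,y)$ for small $0<y$, which contradicts strictness.
\end{itemize}
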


Since the order relation of t-subnorms is closely associated with the existence of their upper and lower bounds, we first present, in the following section, the necessary and sufficient conditions for the comparability of two t-subnorms with continuous, strictly decreasing additive generators.
\section{Orders of t-subnorms with continuous, strictly decreasing additive generators}
In this section, we establish a necessary and sufficient condition for the comparison of two t-subnorms with continuous, strictly decreasing additive generators.

From Proposition \ref{pro2.1} it is evident that many fine properties of continuous Archimedean t-subnorms are shown by their continuous, strictly decreasing additive generators. This naturally reminds us to guess that the pointwise comparability of two t-subnorms may also be deducible from their respective continuous, strictly decreasing additive generators. Observe that each additive generator $s:[0,1]\rightarrow[0,\infty]$ of such t-subnorms, viewed as a function from $[0,1]$ to $[s(1),s(0)]$, is a strictly decreasing bijection, and subsequently, its inverse function $s^{-1}: [s(1),s(0)]\rightarrow[0,1]$ always exists.

For convenience, let $\mu:A\rightarrow B$ and $\nu:B\rightarrow C$ be two functions. Define $(\nu\circ\mu)(x)=\nu(\mu(x))$ for any $x\in A$. Then we have the following theorem that is a foundation of forthcoming sections.
\begin{theorem}\label{theorem:3.1}
Let $S_{1}:[0,1]^{2}\rightarrow[0,1]$ and $S_{2}:[0,1]^{2}\rightarrow[0,1]$ be two non-constant t-subnorms with their respective continuous, strictly decreasing additive generators $s_{1},s_{2}:[0,1]\rightarrow[0,\infty]$. Then $S_{1}\leq S_{2}$ if and only if the function $s_{1}\circ s_{2}^{-1}:[s_{2}(1), s_{2}(0)]\rightarrow[s_{1}(1), s_{1}(0)]$ is subadditive, i.e., for all $u,v\in[s_{2}(1), s_{2}(0)]$ with $u+v\in[s_{2}(1), s_{2}(0)]$ we have
\begin{equation}\label{eq:2}
s_{1}\circ s_{2}^{-1}(u+v)\leq s_{1}\circ s_{2}^{-1}(u)+s_{1}\circ s_{2}^{-1}(v).
\end{equation}
\end{theorem}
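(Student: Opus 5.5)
The plan is to work directly with Eq.~\eqref{eq:1}, using the explicit form of the pseudo-inverse of a continuous, strictly decreasing generator. Put $f=s_{1}\circ s_{2}^{-1}$. As a composition of two strictly decreasing continuous bijections, $f$ is a strictly increasing continuous bijection from $[s_{2}(1),s_{2}(0)]$ onto $[s_{1}(1),s_{1}(0)]$.

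The first step is to compute the pseudo-inverse. By Definition \ref{def2.4}, for a continuous, strictly decreasing $s$ one has $s^{(-1)}(w)=s^{-1}(w)$ for $w\in[s(1),s(0)]$ and $s^{(-1)}(w)=0$ for $w\geq s(0)$. Since $s(x)+s(y)\geq 2s(1)\geq s(1)$, this gives
\[
S_{i}(x,y)=s_{i}^{-1}\bigl(\min(s_{i}(x)+s_{i}(y),\,s_{i}(0))\bigr),\qquad i=1,2,
\]
with the usual conventions when $s_{i}(0)=\infty$. Because $s_{1}$ is strictly decreasing, the inequality $S_{1}(x,y)\leq S_{2}(x,y)$ is equivalent to $s_{1}(S_{1}(x,y))\geq s_{1}(S_{2}(x,y))$. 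Writing $u=s_{2}(x)$ and $v=s_{2}(y)$, so that $s_{1}(x)=f(u)$ and $s_{1}(y)=f(v)$, the comparison $S_{1}\leq S_{2}$ becomes
\[
\min\bigl(f(u)+f(v),\,s_{1}(0)\bigr)\;\geq\; f\bigl(\min(u+v,\,s_{2}(0))\bigr)\quad\text{for all } u,v\in[s_{2}(1),s_{2}(0)].
\]

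\emph{Necessity.} Take $u,v$ with $u+v\in[s_{2}(1),s_{2}(0)]$ and set $x=s_{2}^{-1}(u)$, $y=s_{2}^{-1}(v)$. The reformulated inequality gives
\[
f(u+v)\leq \min\bigl(f(u)+f(v),s_{1}(0)\bigr)\leq f(u)+f(v),
\]
which is exactly Eq.~\eqref{eq:2}.

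\emph{Sufficiency.} Assume $f$ is subadditive and split into two cases.

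If $u+v\leq s_{2}(0)$, we need both $f(u+v)\leq f(u)+f(v)$ and $f(u+v)\leq s_{1}(0)$. The first is subadditivity, and the second holds because $f$ maps into $[s_{1}(1),s_{1}(0)]$.

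If $u+v>s_{2}(0)$, then $S_{2}(x,y)=0$, and we must show $S_{1}(x,y)=0$, i.e.\ $f(u)+f(v)\geq s_{1}(0)$. In the strict case $s_{2}(0)=\infty$ this case arises only when $x=0$ or $y=0$, where $S_{1}(x,y)=0$ trivially. In the nilpotent case, Proposition \ref{pro2.1}(iv) and the non-constancy of $S_{2}$ give $2s_{2}(1)<s_{2}(0)$. Hence, by continuity, we can choose $u'\in[s_{2}(1),u]$ and $v'\in[s_{2}(1),v]$ with $u'+v'=s_{2}(0)$. Monotonicity and subadditivity of $f$ then give
\[
f(u)+f(v)\geq f(u')+f(v')\geq f(s_{2}(0))=s_{1}(0).
\]

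I expect this last case to be the main obstacle. Subadditivity in the statement is only required when $u+v$ lies in the domain, so the saturated region must be reached by the shrinking argument above. This is precisely where non-constancy, via $s_{2}(0)>2s_{2}(1)$, is needed, and where the conventions for infinite values must be handled with care.
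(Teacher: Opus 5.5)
Your proof is correct and follows the same route as the paper: write $S_i(x,y)=s_i^{-1}(\min(s_i(x)+s_i(y),s_i(0)))$, apply $s_1$, substitute $u=s_2(x)$, $v=s_2(y)$, and compare $\min(f(u)+f(v),s_1(0))$ with $f(\min(u+v,s_2(0)))$. Your sufficiency argument is more complete than the paper's, which checks this inequality only for $u+v\le s_2(0)$ and never treats $u+v>s_2(0)$; your shrinking argument via $s_2(0)>2s_2(1)$ closes that case and shows where non-constancy of $S_2$ is needed (when $s_2(0)=\infty$ the case is simply vacuous, since $u=\infty$ gives $u+v=s_2(0)$, not $u+v>s_2(0)$).
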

\begin{proof}
By Eq.(\ref{eq:1}), $S_{1}\leq S_{2}$ if and only if
\begin{equation} \label{eq:3}
s_1^{-1}(\mbox{min}(s_{1}(0), s_1(x)+s_1(y)))\leq s_2^{-1}(\mbox{min}(s_{2}(0), s_2(x)+s_2(y)))
\end{equation} for all $x,y\in[0,1]$.
Let $x,y\in[0,1]$ and set $u=s_{2}(x)$ and $v=s_{2}(y)$. Taking into account that $s_{1}$ is strictly decreasing and applying $s_{1}$ to
both sides of Eq.(\ref{eq:3}), we have that Eq.(\ref{eq:3}) is equivalent to the fact that
\begin{equation} \label{eq:4}
\mbox{min}(s_{1}(0), s_{1}\circ s_{2}^{-1}(u)+s_{1}\circ s_{2}^{-1}(v))\geq s_{1}\circ s_{2}^{-1}(\mbox{min}(s_{2}(0),u+v))
\end{equation}
for all $u,v\in[s_{2}(1), s_{2}(0)]$.

If $S_{1}\leq S_{2}$, then Eq.(\ref{eq:4}) holds for all $u,v\in[s_{2}(1), s_{2}(0)]$ with $u+v\in[s_{2}(1), s_{2}(0)]$. There are two cases as below.
\renewcommand{\labelenumi}{(\roman{enumi})}
\begin{enumerate}
\item If $u+v<s_{2}(0)$, then $s_{1}\circ s_{2}^{-1}(\mbox{min}(s_{2}(0),u+v))=s_{1}\circ s_{2}^{-1}(u+v)$. Thus, by Eq.(\ref{eq:4}), we get
$$s_{1}\circ s_{2}^{-1}(u+v)=s_{1}\circ s_{2}^{-1}(\mbox{min}(s_{2}(0),u+v))\leq s_{1}\circ s_{2}^{-1}(u)+s_{1}\circ s_{2}^{-1}(v).$$
\item If $u+v=s_{2}(0)$, then by Eq.(\ref{eq:4}), Eq.(\ref{eq:2}) hold for all $u,v\in[s_{2}(1), s_{2}(0)]$.
\end{enumerate}

Therefore, in any case, Eq.(\ref{eq:2}) hold for all $u,v\in[s_{2}(1), s_{2}(0)]$, i.e., $s_{1}\circ s_{2}^{-1}$ is subadditive.

Conversely, if the function $s_{1}\circ s_{2}^{-1}:[s_{2}(1), s_{2}(0)]\rightarrow[s_{1}(1), s_{1}(0)]$ is subadditive, then from Eq.(\ref{eq:2}), for all $u,v\in[s_{2}(1), s_{2}(0)]$ with $u+v\in[s_{2}(1), s_{2}(0)]$ we have
$$s_{1}\circ s_{2}^{-1}(\mbox{min}(s_{2}(0),u+v))=s_{1}\circ s_{2}^{-1}(u+v)\leq s_{1}\circ s_{2}^{-1}(u)+s_{1}\circ s_{2}^{-1}(v)$$
and $$s_{1}\circ s_{2}^{-1}(\mbox{min}(s_{2}(0),u+v))\leq s_{1}(0).$$
Therefore, $$s_{1}\circ s_{2}^{-1}(\mbox{min}(s_{2}(0),u+v))\leq\mbox{min}(s_{1}(0), s_{1}\circ s_{2}^{-1}(u)+s_{1}\circ s_{2}^{-1}(v)),$$
i.e., Eq.(\ref{eq:4}) holds. This follows that $S_{1}\leq S_{2}$.
\end{proof}

\begin{remark}\label{remark:3.1} \emph{In Theorem \ref{theorem:3.1}, if both $S_{1}$ and $S_{2}$ are not proper, then they are continuous Archimedean t-norms and their continuous, strictly decreasing additive generators satisfy that $s_{1}(1)=s_{2}(1)=0$ by Proposition \ref{pro2.1} (ii). Consequently, by Theorem \ref{theorem:3.1}, $S_{1}\leq S_{2}$ if and only if the function $s_{1}\circ s_{2}^{-1}:[0, s_{2}(0)]\rightarrow[0, s_{1}(0)]$ is subadditive. Therefore, Theorem \ref{theorem:3.1} generalizes Theorem 6.2 of \cite{EP2000}.}
\end{remark}

The following example illustrates Theorem \ref{theorem:3.1}.
\begin{example}\label{exp3.1}\emph{Consider two t-subnorms $S_{1}:[0,1]^2\rightarrow[0,1]$ with $S_{1}(x,y)=\mbox{max}(x+y-\frac{3}{2},0)$ and $S_{2}:[0,1]^2\rightarrow[0,1]$ with $S_{2}(x,y)=\frac{1}{2}xy$, which are generated by additive generators $s_{1}:[0,1]\rightarrow[0,\infty]$ with $s_{1}(x)=1-\frac{2}{3}x$ and $s_{2}:[0,1]\rightarrow[0,\infty]$ with $s_{2}(x)=-\ln\frac{x}{2}$, respectively. Obviously, both $s_{1}$ and $s_{2}$ are continuous, strictly decreasing. A straightforward calculation shows that the function $s_1 \circ s_2^{-1}$, given by $(s_1\circ s_2^{-1})(u)=1-\frac{4}{3}e^{-u}$, is subadditive for all $u,v\in[\ln2, \infty]$. Therefore, by Theorem \ref{theorem:3.1} we have $S_{1}\leq S_{2}$.}
\end{example}

\section{Upper bounds of t-subnorms with continuous, strictly decreasing additive generators}
This section investigates the existence of an upper bound of both t-subnorms $S_{1}$ and $S_{2}$  with continuous, strictly decreasing additive generators, and an algorithm is given for calculating it.

With Proposition \ref{pro2.1} in hand, for convenience we present the following definition.
\begin{definition}\label{def4.1}
\emph{Let $S:[0,1]^{2}\rightarrow[0,1]$ be a proper t-subnorm with continuous, strictly decreasing additive generator $s:[0,1]\rightarrow[s(1),\infty]$. The function $s_{n}:[0,1]\rightarrow[1,\infty]$ defined by
\begin{equation}\label{eq5}
s_{n}(x)=\frac{s(x)}{s(1)}
\end{equation}
is called a \emph{normalized additive generator} of $S$.}
\end{definition}

Then from Proposition \ref{pro2.1}, we have the following remark.
\begin{remark}\label{remark:4.1} \emph{
\renewcommand{\labelenumi}{(\roman{enumi})}
\begin{enumerate}
\item In Definition \ref{def4.1}, the normalized additive generator $s_{n}$ can be viewed as a strictly decreasing bijection from $[0,1]$ to $[1,s_{n}(0)]$ with $1<s_{n}(0)\leq \infty$. Moreover, from Proposition \ref{pro2.1} (iii), $S$ is strict if and only if $s_{n}(0)=\infty$, and $S$ is nilpotent if and only if $s_{n}(0)<\infty$.
\item In Theorem \ref{theorem:3.1}, if both $S_{1}$ and $S_{2}$ are proper, and $s_{n_{1}}, s_{n_{2}}:[0,1]\rightarrow[1,\infty]$ are their respective normalized additive generators, then $S_{1}\leq S_{2}$ if and only if the function $s_{n_{1}}\circ s_{n_{2}}^{-1}:[1, s_{n_{2}}(0)]\rightarrow[1, s_{n_{1}}(0)]$ is subadditive, where $1<s_{n_{1}}(0), s_{n_{2}}(0)\leq \infty$.
\end{enumerate}}
\end{remark}

It is well-known that each t-subnorm $S:[0,1]^2\rightarrow[0,1]$ can be transformed into a t-norm by redefining (if necessary) its values on the upper right boundary of the unit square, i.e., the following function $T:[0,1]^2\rightarrow[0,1]$ is a t-norm:
\begin{equation*}
T(x,y) =
\begin{cases}
S(x,y) & \text{if } (x,y) \in [0,1)^2, \\
\min(x,y) & \text{otherwise}.
\end{cases}
\end{equation*}
Note that the above t-norm $T$ isn't necessarily continuous even if $S$ is continuous. For example, even if $S:[0,1]^2\rightarrow[0,1]$ with $S(x,y)=\frac{1}{2}xy$ is continuous proper t-subnorm, the following function $T:[0,1]^2\rightarrow[0,1]$ defined by
\begin{equation*}
T(x,y)=
\begin{cases}
\frac{1}{2}xy & \hbox{if }\ (x,y)\in[0,1)^{2},\\
\min(x,y) & \hbox{otherwise},\
\end{cases}
\end{equation*}
is a non-continuous t-norm. But the following result is clearly.
\begin{proposition}\label{pro4.2}
Let $S_{1}:[0,1]^2\rightarrow[0,1]$ and $S_{2}:[0,1]^2\rightarrow[0,1]$ be two non-constant t-subnorms with continuous, strictly decreasing additive generators and, $S_{1}\leq S_{2}$. Then there exists a t-norm $T$ such that
$$T\geq \max(S_{1}, S_{2}).$$
\end{proposition}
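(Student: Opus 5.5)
Since $S_{1}\leq S_{2}$, we have $\max(S_{1},S_{2})=S_{2}$, so it suffices to produce a t-norm $T$ with $T\geq S_{2}$. The plan is to take $T$ to be the t-norm obtained from $S_{2}$ by redefining its values on the upper right boundary, as in the construction recalled just before the proposition:
\begin{equation*}
T(x,y)=
\begin{cases}
S_{2}(x,y) & \text{if } (x,y)\in[0,1)^{2},\\
\min(x,y) & \text{otherwise}.
\end{cases}
\end{equation*}
We use the fact stated there that this $T$ is a t-norm; it needs neither continuity nor an additive generator. Alternatively, the trivial choice $T=T_{M}$ works at once, since every t-subnorm satisfies $S\leq \min$. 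Presenting the redefinition construction gives a tighter bound, but the proposition only asks for existence.

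The verification of $T\geq S_{2}$ splits into two cases. On $[0,1)^{2}$ we have equality by definition. Off $[0,1)^{2}$, at least one coordinate equals $1$, and there $T(x,y)=\min(x,y)\geq S_{2}(x,y)$ by the defining inequality of t-subnorms (Definition \ref{def2.5}). Together with $S_{2}\geq S_{1}$ this gives $T\geq \max(S_{1},S_{2})$.

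The only possible obstacle is confirming that the redefined $T$ is associative when a boundary value enters an iterated product. This is the standard fact cited before the proposition (from \cite{EP2000}), so I would invoke it rather than reprove it. If one wants to avoid relying on it, falling back to $T=T_{M}$ removes the issue entirely. Note also that the hypotheses that the generators are continuous and strictly decreasing, and that $S_{1},S_{2}$ are non-constant, are not needed beyond guaranteeing that $S_{1},S_{2}$ are t-subnorms.
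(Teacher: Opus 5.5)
Your proposal is correct and matches the paper's intended argument. The paper gives no written proof: it calls the result clear immediately after recalling that redefining a t-subnorm on the upper-right boundary yields a t-norm, and applying that construction to $S_2=\max(S_1,S_2)$ is exactly what you do. Your side remarks are also accurate: $T_M$ already suffices, the extra hypotheses are superfluous, and associativity of the redefined $T$ is immediate, since $S_2(x,y)\le\min(x,y)<1$ for $x,y<1$ keeps iterated values inside $[0,1)^2$.
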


On the other hand, the following theorem is well-known.
\begin{theorem}[\cite{EP2000}]\label{thm4.1}
Let $T_{1}:[0,1]^2\rightarrow[0,1]$ and $T_{2}:[0,1]^2\rightarrow[0,1]$ be two continuous Archimedean t-norms. Then there exists a continuous Archimedean t-norm $T$ such that $T\geq \max(T_{1}, T_{2})$.
\end{theorem}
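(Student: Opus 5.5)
The statement is Theorem \ref{thm4.1}, cited from \cite{EP2000}. I would prove it by exhibiting a single continuous Archimedean t-norm that dominates every t-norm whatsoever, except $T_M$, which is not Archimedean. No such universal dominating Archimedean t-norm exists: already on the strict side, the Aczél--Alsina or Frank families approach $T_M$ without a largest member. So instead I would build $T$ from $T_1$ and $T_2$ via their generators.

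By Theorem \ref{theorem:2.1}, $T_1$ and $T_2$ have continuous, strictly decreasing additive generators $t_1,t_2:[0,1]\to[0,\infty]$ with $t_i(1)=0$. By Remark \ref{remark:3.1}, i.e.\ Theorem \ref{theorem:3.1} in the t-norm case, $T_i\le T$ holds if and only if $t_i\circ t^{-1}$ is subadditive on $[0,t(0)]$, where $t$ is a generator of $T$.

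The key simplification is to aim for a $T$ of the form $T(x,y)=t^{-1}(t(x)+t(y))$ with $t=\min(t_1,t_2)$ raised to a suitable power, or more robustly $t=\varphi\circ\min(t_1,t_2)$ with $\varphi$ concave. The cleaner route I would actually take is the following.

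First, reduce to a strict $T$. A strict t-norm with generator $t$ dominates $T_i$ if $t_i\circ t^{-1}$ is subadditive on $[0,\infty]$ (on the nilpotent side one must also check the truncation at $t_i(0)$, but this is automatic because $\min$ with a constant preserves subadditivity of a nondecreasing function). A sufficient condition for subadditivity of a function $g$ with $g(0)=0$ is that $g(u)/u$ is nonincreasing, which holds in particular when $g$ is concave.

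Second, choose $t(x)=\bigl(\max(\tau_1(x),\tau_2(x))\bigr)^{p}$, or better to work with the multiplicative generators $\theta_i=e^{-t_i}$ and construct $\theta$. Concretely, I would set $h(x)=\min(t_1(x),t_2(x))$, which is continuous, strictly decreasing, and satisfies $h(1)=0$. Then $t_i\circ h^{-1}(u)\ge u$, but $u\mapsto t_i(h^{-1}(u))/u$ need not be monotone. To fix this, define $t=\psi\circ h$ with $\psi$ chosen so that $t_i\circ t^{-1}=(t_i\circ h^{-1})\circ\psi^{-1}$ has nonincreasing ratio to the identity. One natural choice is $\psi^{-1}(w)=\sup_{0<u'\le \,\cdot}$, i.e.\ take $\psi$ to be the inverse of the least star-shaped majorant type construction. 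Explicitly, I would let $G(u)=\max_i t_i(h^{-1}(u))$, which is bounded by $\max t_i(0)$ where finite, define $\rho(u)=\inf_{0<v\le u}G(v)/v$, and take $\psi(u)=$ something like $\int_0^u \rho$-adjusted so that the composite ratios become monotone.

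This last step is the main obstacle: producing an explicit increasing homeomorphism $\psi$ of $[0,\infty]$ making both $g_i=t_i\circ h^{-1}\circ\psi^{-1}$ satisfy $g_i(w)/w$ nonincreasing, while $t=\psi\circ h$ remains a continuous generator. I would first try the logarithmic rescaling $\psi(u)=\log(1+u)$, combined near $0$ with a power $u^{\varepsilon}$ with $\varepsilon$ small. The idea is that composing with a sufficiently slowly growing $\psi^{-1}$ pushes everything toward $T_M$; this is justified since $t^{\varepsilon}$-type generators converge to $T_M$ as $\varepsilon\to0$. I would verify domination pointwise via the equivalent form $T_i(x,y)\le T(x,y)$, checking on compact subsets of $(0,1)^2$ and near the boundary separately.

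If that becomes unmanageable, the fallback I would cite is that the paper's own Section 4 algorithm (for the subnorm version) produces exactly this object, so the t-norm case follows by restricting to $s_i(1)=0$.
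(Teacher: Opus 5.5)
\textbf{There is a genuine gap: the proposal never constructs the generator $t$.}

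Your reduction is right. By Theorem~\ref{theorem:3.1} (cf.\ Remark~\ref{remark:3.1}) it suffices to find a continuous, strictly decreasing $t$ with $t(1)=0$ for which each $g_i=t_i\circ t^{-1}$ is subadditive. By the argument of Lemma~\ref{lemm5.1} it is enough that $g_i(w)/w$ be nonincreasing. The step you yourself call the main obstacle is the whole proof, and it is left as ``$\int_0^u\rho$-adjusted''. Note that $u\rho(u)$ with $\rho(u)=\inf_{0<v\le u}G(v)/v$ is only a star-shaped minorant of $G$, not a change of variable making $G_i\circ\psi^{-1}$ star-shaped.

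Your concrete candidates point the wrong way. Both $\log(1+u)$ and $u^{\varepsilon}$ with $\varepsilon<1$ are concave with $\psi(0)=0$, hence subadditive. By Theorem~\ref{theorem:3.1} (with $s_1=\psi\circ h$, $s_2=h$), the t-norm generated by $\psi\circ h$ therefore lies \emph{below} the one generated by $h$. Generators $h^{\lambda}$ approach $T_M$ as $\lambda\to\infty$; as $\lambda\to 0$ they tend to the drastic product.

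More fundamentally, no rescaling fixed in advance can work, and pointwise convergence to $T_M$ gives no domination. One checks that $t_1\circ h^{-1}=\max(\phi,\mathrm{id})$ with $\phi=t_1\circ t_2^{-1}$ arbitrary. So for any given $\psi$ (e.g.\ with $t_2(x)=-\ln x$) you can pick $t_1$ with $\phi\ge\mathrm{id}$ and $\phi(\psi^{-1}(2))>2\phi(\psi^{-1}(1))$, which violates $g_1(2)\le 2g_1(1)$.

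The fallback also fails. The paper does not prove this statement: it quotes it and then uses it in Proposition~\ref{pro4.3} and Theorem~\ref{theorem:4.2}. Moreover, the proof of Theorem~\ref{theorem:4.1} divides by $s(1)$ and anchors $s_n(1)=1$, which is impossible when $s_i(1)=0$, and it handles only strict inputs.

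\textbf{The missing idea} is a generator built from $t_1,t_2$ at all scales at once. It is exactly the chain infimum $D$ of that proof, read in the original variable. Star-shapedness of $g_i$ amounts to $t(y)/t(x)\le\min_i t_i(y)/t_i(x)$ for $x<y$. That is, $-\ln t$ must increase at least as fast as each $-\ln t_i$ on every subinterval, and the least such choice accumulates the larger logarithmic increment. For $0\le x<y\le1$ put
\[
E(x,y)=\inf\Bigl\{\prod_{k=1}^{n}\min_{i=1,2}\frac{t_i(x_k)}{t_i(x_{k-1})}\ :\ x=x_0<x_1<\cdots<x_n=y,\ n\in\mathbb{N}\Bigr\}.
\]
As in steps (I)--(III) there, $E(x,y)=E(x,z)E(z,y)$ for $x<z<y$, and $\prod_i t_i(y)/t_i(x)\le E(x,y)\le\min_i t_i(y)/t_i(x)<1$.

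Fix $a\in(0,1)$ and set $t(a)=1$, $t(x)=E(a,x)$ for $x>a$, and $t(x)=1/E(x,a)$ for $x<a$. Then $t(y)/t(x)=E(x,y)$ for all $x<y$, so:
\begin{itemize}
\item $t$ is strictly decreasing, and continuous by the two-sided bounds;
\item $t(1)=0$, since $E(a,1)\le t_1(1)/t_1(a)=0$;
\item $t(0)<\infty$ exactly when both $t_i(0)<\infty$.
\end{itemize}
Thus $t$ generates a continuous Archimedean t-norm $T$. The inequality $t(y)/t(x)\le t_i(y)/t_i(x)$ says that $x\mapsto t_i(x)/t(x)$ is nondecreasing, i.e.\ $w\mapsto g_i(w)/w$ is nonincreasing. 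Hence $g_i$ is subadditive and $T_i\le T$.

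The one genuine change from the subnorm argument is that the anchor must be an interior point $a$ rather than $1$, because $t$ has to vanish at $1$.
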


Moreover, we fortunately have the following proposition.
\begin{proposition}\label{pro4.3}
Let $S_{1}:[0,1]^2\rightarrow[0,1]$ and $S_{2}:[0,1]^2\rightarrow[0,1]$ be two non-constant proper t-subnorms with continuous, strictly decreasing additive generators. Then there exists a continuous Archimedean t-norm $T$ such that
$$T\geq \max(S_{1}, S_{2}).$$
\end{proposition}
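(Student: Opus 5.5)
Our plan is to turn each $S_i$ into a continuous Archimedean t-norm that dominates it, and then apply Theorem \ref{thm4.1}.

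Let $s_i$ be a continuous, strictly decreasing additive generator of $S_i$ ($i=1,2$). Since $S_i$ is proper, Proposition \ref{pro2.1} (ii) gives $s_i(1)>0$. We set
$$t_i(x)=s_i(x)-s_i(1), \qquad x\in[0,1].$$
Then $t_i:[0,1]\to[0,\infty]$ is continuous and strictly decreasing with $t_i(1)=0$. By Theorem \ref{theorem:2.1}, the function $T_i(x,y)=t_i^{(-1)}(t_i(x)+t_i(y))$ is a continuous Archimedean t-norm. If $s_i(0)=\infty$ it is strict; otherwise it is nilpotent.

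The key step is to show $S_i\le T_i$. We argue directly rather than through Theorem \ref{theorem:3.1}, although that theorem would also work: there $s_i\circ t_i^{-1}(u)=u+s_i(1)$, and subadditivity amounts to $u+v+c\le u+v+2c$ with $c=s_i(1)>0$.

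For the direct argument, note that $t_i^{-1}(w)=s_i^{-1}(w+s_i(1))$ on $[0,t_i(0)]$. Hence
$$T_i(x,y)=s_i^{-1}\bigl(\min(s_i(0),\,s_i(x)+s_i(y)-s_i(1))\bigr),$$
while
$$S_i(x,y)=s_i^{-1}\bigl(\min(s_i(0),\,s_i(x)+s_i(y))\bigr).$$
Since $s_i(1)>0$, the argument of $s_i^{-1}$ in $T_i$ is at most the one in $S_i$. Because $s_i^{-1}$ is decreasing, this gives $S_i\le T_i$ pointwise.

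Two points need checking here, though neither is a real obstacle. First, the pseudo-inverse conventions must match the truncation at $s_i(0)$, including the case $s_i(0)=\infty$. Second, $t_i(0)=s_i(0)-s_i(1)>0$, which is automatic from strict monotonicity. The non-constancy hypothesis is not actually needed for this construction.

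Finally, Theorem \ref{thm4.1} provides a continuous Archimedean t-norm $T\ge\max(T_1,T_2)$. Then
$$T\ge\max(T_1,T_2)\ge\max(S_1,S_2),$$
which proves the proposition.
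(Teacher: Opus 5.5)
Your proof is correct and follows the paper's argument. The paper first normalizes to $s_{n_i}=s_i/s_i(1)$ and takes $t_i=s_{n_i}-1$; this is your $t_i=s_i-s_i(1)$ divided by the positive constant $s_i(1)$, so it generates the same t-norm $T_i$. The paper then shows $T_i\ge S_i$ by the same comparison of the arguments of $s_i^{-1}$ and concludes via Theorem \ref{thm4.1}.
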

\begin{proof}Since both $S_{1}$ and $S_{2}$ are generated by continuous, strictly decreasing additive generators, by Proposition \ref{pro2.1} (i) they are continuous, Archimedean proper t-subnorms. Let $s_{n_{1}}$, $s_{n_{2}} :[0,1]\rightarrow[1, \infty]$ be their respective normalized additive generators. Define $t_{i}:[0,1]\rightarrow [0,\infty]$ by $t_{i}(x)=s_{n_{i}}(x)-1$ for $i=1,2$. Then each $t_{i}$ is continuous, strictly decreasing with $t_{i}(1)=0$ due to the properties of $s_{n_{i}}$. Thus, by Theorem \ref{theorem:2.1}, every $t_{i}$ is a continuous additive generator of a continuous Archimedean t-norm $T_{i}$. Furthermore, for $i=1,2$ we have
\begin{align*}
T_{i}(x,y)&=t_{i}^{-1}(\mbox {min}(t_{i}(0), t_{i}(x)+ t_{i}(y)))\\
&=t_{i}^{-1}(\mbox {min}(s_{n_{i}}(0)-1, s_{_{n_{i}}}(x)+ s_{n_{i}}(y)-2))\mbox{ since }t_{i}(x)=s_{n_{i}}(x)-1\\
&=s_{n_{i}}^{-1}(\mbox {min}(s_{n_{i}}(0), s_{_{n_{i}}}(x)+ s_{n_{i}}(y)-1))\mbox{ since }t_{i}^{-1}(x)=s_{n_{i}}^{-1}(x+1)\\
&\geq s_{n_{i}}^{-1}(\mbox {min}(s_{n_{i}}(0), s_{_{n_{i}}}(x)+ s_{n_{i}}(y)))\mbox{ since }s_{n_{i}} \mbox{ is strictly decreasing}\\
&=S_{i}(x,y).
\end{align*}
Consequently, from Theorem \ref{thm4.1} there exists a continuous Archimedean t-norm $T$ such that
$$T\geq \max(T_{1},T_{2})\geq \max(S_{1}, S_{2}).$$
\end{proof}

Moreover, from Proposition \ref{pro4.3} and its proof and Theorem \ref{thm4.1} the following corollary is immediately.
\begin{corollary}\label{coro4.1}
Let $S_{1}:[0,1]^2\rightarrow[0,1]$ and $S_{2}:[0,1]^2\rightarrow[0,1]$ be two non-constant t-subnorms with continuous, strictly decreasing additive generators. If one of them is a proper t-subnorm, then there exists a continuous Archimedean t-norm $T$ such that
$$T\geq \max(S_{1}, S_{2}).$$
\end{corollary}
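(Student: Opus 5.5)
The plan is to reduce Corollary \ref{coro4.1} to Proposition \ref{pro4.3} and Theorem \ref{thm4.1} by replacing each proper t-subnorm with a continuous Archimedean t-norm that dominates it.

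The argument splits according to which of $S_{1},S_{2}$ is proper. By Proposition \ref{pro2.1} (i) and (ii), each $S_{i}$ with $s_{i}(1)=0$ is already a continuous Archimedean t-norm, while each $S_{i}$ with $s_{i}(1)>0$ is a continuous, Archimedean proper t-subnorm. If both are proper, the claim is exactly Proposition \ref{pro4.3}. So it remains to treat the case where, say, $S_{1}$ is proper and $S_{2}$ is a t-norm; the symmetric case is identical by commutativity of $\max$.

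In that case I would follow the first half of the proof of Proposition \ref{pro4.3}, applied only to $S_{1}$. Take the normalized additive generator $s_{n_{1}}=s_{1}/s_{1}(1)$ from Definition \ref{def4.1}; by Proposition \ref{pro2.1} (v) it still generates $S_{1}$. Put $t_{1}=s_{n_{1}}-1$. This function is continuous, strictly decreasing and satisfies $t_{1}(1)=0$, so by Theorem \ref{theorem:2.1} it generates a continuous Archimedean t-norm $T_{1}$. The chain of (in)equalities in that proof is literally the same here and gives $T_{1}\geq S_{1}$. The key step is the replacement of $s_{n_{1}}(x)+s_{n_{1}}(y)-1$ by the larger quantity $s_{n_{1}}(x)+s_{n_{1}}(y)$ under the decreasing map $s_{n_{1}}^{-1}\circ\min(s_{n_{1}}(0),\cdot)$. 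Then $T_{1}$ and $T_{2}:=S_{2}$ are two continuous Archimedean t-norms, so Theorem \ref{thm4.1} provides a continuous Archimedean t-norm $T$ with $T\geq\max(T_{1},T_{2})\geq\max(S_{1},S_{2})$.

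There is no real obstacle; the only point needing care is the boundary of the range. We need $s_{n_{1}}^{-1}$ to be applied within $[1,s_{n_{1}}(0)]$ and $t_{1}^{-1}$ within $[0,t_{1}(0)]$. The inner minima with $s_{n_{1}}(0)$ and $t_{1}(0)=s_{n_{1}}(0)-1$ guarantee this, and they also handle the strict case $s_{n_{1}}(0)=\infty$ through the usual convention $\infty-1=\infty$. The non-constancy hypothesis ensures $s_{1}(1)<s_{1}(0)$, so $t_{1}$ is a genuine nonconstant generator.
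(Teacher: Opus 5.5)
Your proposal is correct and follows the route the paper indicates: Proposition \ref{pro4.3}, its construction $t_1=s_{n_1}-1$ applied to the proper t-subnorm, and Theorem \ref{thm4.1}, with the case split made explicit. One small aside: $s_1(1)<s_1(0)$ already follows from strict monotonicity, so your argument never actually uses the non-constancy hypothesis.
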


Proposition \ref{pro4.3} shows that for two proper t-subnorms with continuous, strictly decreasing additive generators, there always exists a continuous Archimedean t-norm serving as an upper bound. Next, we further raise the question: does there also exist a proper continuous Archimedean t-subnorm generated by a continuous, strictly decreasing additive generator serving as an upper bound? In what follows, we positively answer this problem by distinguishing two cases.

\subsection{In the case of proper and strict t-subnorms}
%In the case that both $S_{1}$ and $S_{2}$ are proper and strict, we completely show the existence of an upper bound of $S_{1}$ and $S_{2}$. In addition, we present an algorithm for finding an upper bound of $S_{1}$ and $S_{2}$.

Analogous to the proof of Theorem \ref{theorem:3.1}, the following lemma holds.
\begin{lemma}\label{lemm4.2}
Let $S_{1}:[0,1]^2\rightarrow[0,1]$ and $S_{2}:[0,1]^2\rightarrow[0,1]$ be two non-constant proper and strict t-subnorms with their respective normalized additive generators $s_{n_{1}},s_{n_{2}}:[0,1]\rightarrow[1, \infty]$. Then $S_{1}\leq S_{2}$ if and only if the function $s_{_{n_{2}}}\circ s_{n_{1}}^{-1}:[1,\infty]\rightarrow[1, \infty]$ is superadditive, i.e., for all $u,v\in[1, \infty]$ we have
\begin{equation*}
s_{n_{2}}\circ s_{n_{1}}^{-1}(u+v)\geq s_{n_{2}}\circ s_{n_{1}}^{-1}(u)+s_{n_{2}}\circ s_{n_{1}}^{-1}(v).
\end{equation*}
\end{lemma}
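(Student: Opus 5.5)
Since $S_1$ and $S_2$ are strict, Remark \ref{remark:4.1}(i) gives $s_{n_1}(0)=s_{n_2}(0)=\infty$. Hence each $s_{n_i}$ is a strictly decreasing continuous bijection from $[0,1]$ onto $[1,\infty]$. For all $x,y\in[0,1]$, Eq.\eqref{eq:1} then reduces to $S_i(x,y)=s_{n_i}^{-1}(s_{n_i}(x)+s_{n_i}(y))$, with the convention $s_{n_i}^{-1}(\infty)=0$; the truncation by $\min(s_{n_i}(0),\cdot)$ appearing in Theorem \ref{theorem:3.1} is vacuous. By Proposition \ref{pro2.1}(v), we may replace $s_i$ by $s_{n_i}$ without changing $S_i$. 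I would mimic the proof of Theorem \ref{theorem:3.1}, but apply $s_{n_2}$ rather than $s_{n_1}$ to both sides of the inequality. This is what turns subadditivity of $s_{n_1}\circ s_{n_2}^{-1}$ into superadditivity of its inverse $\varphi:=s_{n_2}\circ s_{n_1}^{-1}$.

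\textbf{Step 1 (reformulation).} $S_1\le S_2$ means $s_{n_1}^{-1}(s_{n_1}(x)+s_{n_1}(y))\le s_{n_2}^{-1}(s_{n_2}(x)+s_{n_2}(y))$ for all $x,y\in[0,1]$. Applying the strictly decreasing bijection $s_{n_2}$ reverses the inequality, so this is equivalent to
\[
\varphi\bigl(s_{n_1}(x)+s_{n_1}(y)\bigr)\ge s_{n_2}(x)+s_{n_2}(y).
\]
Next I substitute $u=s_{n_1}(x)$ and $v=s_{n_1}(y)$, which range over all of $[1,\infty]$ since $s_{n_1}$ is onto $[1,\infty]$. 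Then $s_{n_2}(x)=\varphi(u)$ and $s_{n_2}(y)=\varphi(v)$, so the condition reads $\varphi(u+v)\ge\varphi(u)+\varphi(v)$ for all $u,v\in[1,\infty]$. Note that $u+v\ge 2$ stays inside $[1,\infty]$, so no boundary case analysis like cases (i)/(ii) of Theorem \ref{theorem:3.1} is needed.

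\textbf{Step 2 (both directions).} Because the substitution $(x,y)\mapsto(u,v)$ is a bijection $[0,1]^2\to[1,\infty]^2$ and every step is an equivalence, both implications follow at once: $S_1\le S_2$ holds if and only if $\varphi$ is superadditive on $[1,\infty]$.

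\textbf{Expected obstacle.} The only delicate point is the handling of $\infty$. When $x=0$ or $y=0$, we have $u=\infty$ or $v=\infty$, and then $\varphi(\infty)=\infty$ because $s_{n_1}^{-1}(\infty)=0$ and $s_{n_2}(0)=\infty$. In that case both sides of the superadditivity inequality equal $\infty$, and both $S_i(x,y)=0$, so the equivalence is trivially preserved. I would treat this case separately at the start, fix the arithmetic conventions on $[1,\infty]$ explicitly, and then run Step 1 for finite $u,v$, that is, for $x,y\in(0,1]$.
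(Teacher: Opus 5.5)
Your proposal is correct and is the paper's intended route. The paper gives no separate proof of this lemma and only says it holds analogously to the proof of Theorem \ref{theorem:3.1}. Applying $s_{n_2}$ instead of $s_{n_1}$ to both sides, with the substitution $u=s_{n_1}(x)$, $v=s_{n_1}(y)$, is exactly that analogue. Your explicit treatment of the value $\infty$, and your remark that the $\min(s_{n_i}(0),\cdot)$ truncation is vacuous in the strict case, supply details the paper leaves implicit.
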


The following example illustrates Lemma \ref{lemm4.2}.
\begin{example}\label{exp4.1}\emph{Let $S_{1}:[0,1]^2\rightarrow[0,1]$ with $S_{1}(x,y)=\frac{1}{2}xy$ and $S_{2}:[0,1]^2\rightarrow[0,1]$ with $S_{2}(x,y)=\frac{xy}{x+y-0.5xy}$, which are generated by normalized additive generators $s_{n_{1}}:[0,1]\rightarrow[1, \infty]$ with $s_{n_{1}}(x)=-\frac{\ln x}{\ln2}+1$ and $s_{n_{2}}:[0,1]\rightarrow[1, \infty]$ with $s_{n_{2}}(x)=\frac{2}{x}-1$, respectively. Obviously, both $S_{1}$ and $S_{2}$ are proper and strict. A simple computation shows that the function $s_{n_{1}} \circ s_{n_{2}}^{-1}:[1,\infty]\rightarrow[1,\infty]$, given by $(s_{n_{1}}\circ s_{n_{2}}^{-1})(u)=\frac{\ln (u+1)}{\ln2}$, is subadditive for all $u,v\in[1, \infty]$ and the function $s_{n_{2}} \circ s_{n_{1}}^{-1}:[1,\infty]\rightarrow[1,\infty]$, given by $(s_{n_{2}}\circ s_{n_{1}}^{-1})(u)=2^{u}-1$, is superadditive for all $u,v\in[1, \infty]$. Therefore, by Theorem \ref{theorem:3.1} or Lemma \ref{lemm4.2} we have $S_{1}\leq S_{2}$.}
\end{example}

Based on Lemma \ref{lemm4.2}, we can establish the following theorem.
\begin{theorem}\label{theorem:4.1}
Let $S_{1}:[0,1]^2\rightarrow[0,1]$ and $S_{2}:[0,1]^2\rightarrow[0,1]$ be two non-constant proper and strict t-subnorms with continuous, strictly decreasing additive generators. Then there exists a non-constant proper and strict t-subnorm $S_{u}:[0,1]^2\rightarrow[0,1]$ generated by a continuous, strictly decreasing additive generator such that
$$S_{u}\geq \max(S_{1}, S_{2}).$$
\end{theorem}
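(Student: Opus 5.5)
The plan is to build $S_u$ directly from a suitable normalized additive generator $s_u:[0,1]\rightarrow[1,\infty]$, continuous and strictly decreasing with $s_u(1)=1$ and $s_u(0)=\infty$. Any such $s_u$ generates a non-constant, proper and strict t-subnorm by Proposition \ref{pro2.1} (ii)--(iv), since $s_u(0)=\infty>2=2s_u(1)$. Then I will verify $S_i\leq S_u$ for $i=1,2$ using Lemma \ref{lemm4.2}, i.e.\ by showing that $s_u\circ s_{n_i}^{-1}:[1,\infty]\rightarrow[1,\infty]$ is superadditive. Unwinding the definitions, this is equivalent to the pointwise condition
\[
s_u(S_i(x,y))\geq s_u(x)+s_u(y)\quad\mbox{for all }x,y\in[0,1],
\]
and I will work with this form in the verification. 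Here $s_{n_1},s_{n_2}$ are the normalized additive generators of $S_1,S_2$, so that $S_i(x,y)=s_{n_i}^{-1}(s_{n_i}(x)+s_{n_i}(y))$ with $s_{n_i}(1)=1$ and $s_{n_i}(0)=\infty$.

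The guiding idea comes from the proof of Proposition \ref{pro4.3}. In the scale $t_i=s_{n_i}-1$, $S_i$ acts as ``$a+b+1$'', while a t-norm acts as ``$a+b$''. So a proper strict $S_u$ can sit above both $S_i$ provided its own ``$+1$'' defect is absorbed by this unit gap. I would first combine the two generators into $g=\max(s_{n_1},s_{n_2})$. This $g$ is continuous and strictly decreasing with $g(1)=1$ and $g(0)=\infty$, and $g\geq s_{n_i}$, so that
\[
g(S_i(x,y))\geq s_{n_i}(S_i(x,y))=s_{n_i}(x)+s_{n_i}(y).
\]
I would then take $s_u=\varphi\circ g$ for an increasing concave bijection $\varphi:[1,\infty]\rightarrow[1,\infty]$ with $\varphi(1)=1$, chosen to grow slowly, for instance $\varphi(w)=1+\log_2 w$ or $\varphi(w)=1+c\ln w$ with a small $c>0$. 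With this choice, $s_u(x)+s_u(y)$ stays comparable to $\varphi$ of a single large argument, whereas $s_u(S_i(x,y))$ sees the sum.

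The main obstacle is the mismatch between $g$ and $s_{n_i}$. The lower bound $g(S_i(x,y))\geq s_{n_i}(x)+s_{n_i}(y)$ involves $s_{n_i}$, but the right-hand side $\varphi(g(x))+\varphi(g(y))$ involves $g$, and $g(x)$ may be governed by the other generator, so it can exceed $s_{n_i}(x)$ by an uncontrolled factor. I would first try to control the ratio $g/s_{n_i}$. If it is not bounded, I would instead replace $\max$ by a composite built in the $t_i$-scale, for example $g=1+\psi^{-1}$ where $\psi$ generates a continuous Archimedean t-norm $T\geq\max(T_1,T_2)$ from Theorem \ref{thm4.1}, and then use the inequality $t(S_i(x,y))\geq t(T_i(x,y))$ shifted by the unit gap. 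The delicate part is to show that this gap stays uniformly positive near $0$ on the $\varphi$-scale, and this is where the choice of $\varphi$ (or of a scaling constant $c$) has to be tuned. Once superadditivity of $s_u\circ s_{n_i}^{-1}$ is established for $i=1,2$, Lemma \ref{lemm4.2} gives $S_u\geq\max(S_1,S_2)$, which completes the proof.
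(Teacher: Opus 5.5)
\textbf{There is a genuine gap: the route you commit to fails, and the fallback is not carried out.} Your reduction is fine: normalize, then check $s_u(S_i(x,y))\geq s_u(x)+s_u(y)$ via Lemma \ref{lemm4.2}.

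\emph{Why the main route fails.} The step $g=\max(s_{n_1},s_{n_2})$ is only a change of variables. With $\theta=s_{n_2}\circ s_{n_1}^{-1}$, both $g\circ s_{n_1}^{-1}=\max(\mathrm{id},\theta)$ and $g\circ s_{n_2}^{-1}=\max(\theta^{-1},\mathrm{id})$ are again increasing bijections of $[1,\infty]$. So finding $\varphi$ that makes both $\varphi\circ g\circ s_{n_i}^{-1}$ superadditive is exactly the original problem. Moreover, your tuning of $\varphi$ goes the wrong way. Take $S_1=S_2$: then $g=s_{n_1}$ and $s_u\circ s_{n_1}^{-1}=\varphi$. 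For $\varphi(w)=1+\log_2 w$ you get $\varphi(a+b)\leq\varphi(a)+\varphi(b)$ on $[1,\infty]$, because $a+b\leq 2ab$; for instance $\varphi(4)=3<4=2\varphi(2)$. By Theorem \ref{theorem:3.1} this gives $S_u\leq S_1$, a \emph{lower} bound. For $\varphi(w)=1+c\ln w$ with $c<1/\ln 2$, it fails already at $x=y=1$, since $s_u(S_1(1,1))=\varphi(2)<2=s_u(1)+s_u(1)$. Superadditivity of $s_u\circ s_{n_i}^{-1}$ needs $s_u$ to grow at least as fast as each $s_{n_i}$ in the ratio sense. That is exactly what the paper's chain-infimum $D$ enforces: since $D(x,y)\leq\min_i m_i^{-1}(x)/m_i^{-1}(y)$, each $m\circ m_i(u)/u$ is nondecreasing.

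\emph{How the fallback closes.} Your fallback contains the idea that works, but you never extract the inequality that makes it go through. Also, the difficulty you flag ``near $0$'' is not where it lies. Note that $1+\psi^{-1}$ is a type error; you need $1+c\psi$.
\begin{itemize}
\item Let $T\geq\max(T_1,T_2)$ be given by Theorem \ref{thm4.1}, where $T_i$ is generated by $t_i=s_{n_i}-1$. $T$ is strict, because a nilpotent t-norm has zero divisors and cannot dominate the strict $T_1$. Let $\psi$ be its additive generator.
\item By Remark \ref{remark:3.1}, $t_i\circ\psi^{-1}$ is subadditive. Hence its inverse $\rho_i=\psi\circ t_i^{-1}$ is a superadditive increasing bijection of $[0,\infty]$ with $\rho_i(0)=0$, and
\[ \rho_i(a+b+1)\geq\rho_i(a)+\rho_i(b)+\rho_i(1)\quad\mbox{for all }a,b\geq 0. \]
So the unit gap is worth $\rho_i(1)>0$ uniformly, not only near $0$.
\item Set $s_u=1+c\psi$ with $c\geq 1/\min(\rho_1(1),\rho_2(1))$. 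Then $s_u\circ s_{n_i}^{-1}(u)=1+c\rho_i(u-1)$, which is superadditive on $[1,\infty]$, and Lemma \ref{lemm4.2} concludes.
\end{itemize}
Here $\varphi$ is affine with a large slope $c$, not concave and slowly growing; a logarithmic $\varphi$ would destroy the argument.

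\emph{Comparison with the paper.} Once repaired, this proof is much shorter than the paper's. However, it shifts all the work onto the cited t-norm result, Theorem \ref{thm4.1}. The paper's construction of $D(x,y)$ is self-contained and is what yields Algorithm \ref{alg:4.1}.
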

\begin{proof}Let $s_{n_{1}}, s_{n_{2}}:[0,1]\rightarrow[1,\infty]$ be the normalized additive generators of $S_{1}$ and $S_{2}$, respectively. Meanwhile, let $s:[0,1]\rightarrow[1,\infty]$ be a strictly decreasing bijection.

Consider the strictly decreasing bijection $f:[0,1]\rightarrow[1,\infty]$ given by $f(x)=\frac{1}{x}$ (taking into account $\frac{1}{\infty}=0$ and $\frac{1}{0}=\infty$). Then, the superadditivity of composite function $s\circ s_{n_{i}}^{-1}$ is equivalent with the superadditivity of composite function $m\circ m_{i}$, $i=1,2$, where $m=s\circ f^{-1}$ and $m_{i}=f\circ s_{n_{i}}^{-1}$, $i=1,2$, are strictly increasing bijections from $[1,\infty]$ to $[1,\infty]$. Therefore, it suffices to show that for two strictly increasing bijections $m_{1},m_{2}:[1,\infty]\rightarrow[1,\infty]$ there exists a strictly increasing bijection $m:[1,\infty]\rightarrow[1,\infty]$ such that both $m\circ m_{1}$ and $m\circ m_{2}$ are superadditive from Lemma \ref{lemm4.2}.

For two elements $1\leq x\leq y\leq \infty$, let
\begin{equation}\label{eq:6}
D(x,y)=
\begin{cases}
1 & \hbox{if }\ x=y,\\
\mbox{inf}\{h(x_{0}, x_{1}, \ldots, x_{n-1}, x_{n})| x=x_{0}<x_{1}<\cdots <x_{n}=y \mbox{ with }n\in\mathbb{N}\} & \hbox{if }\ {x<y},
\end{cases}
\end{equation}
where $x_{0}<x_{1}<\cdots <x_{n}$ is a finite sequence and
\begin{equation}\label{eq:7}
h(x_{0}, x_{1}, \ldots, x_{n-1}, x_{n})=\prod_{i=1}^{n}\mbox{min}\left(\frac{m_{1}^{-1}(x_{i-1})}{m_{1}^{-1}(x_{i})},\frac{m_{2}^{-1}(x_{i-1})}{m_{2}^{-1}(x_{i})}\right).
\end{equation}
It is evident that for all $i=1,2,\ldots,n$ we have $0<\frac{m_{1}^{-1}(x_{i-1})}{m_{1}^{-1}(x_{i})}<1$ and $0<\frac{m_{2}^{-1}(x_{i-1})}{m_{2}^{-1}(x_{i})}<1$ since both $m_{1}^{-1}$ and $m_{2}^{-1}$ are strictly increasing bijections from $[1,\infty]$ to $[1,\infty]$.

Further, let us define a function $m:[1,\infty]\rightarrow[1,\infty]$ by
\begin{equation}\label{eq:8}
m(x)=\frac{1}{D(1,x)}.
\end{equation}
In what follows, we shall prove that $m$ is the function we are seeking in four steps.\\
\newcounter{step}
\setcounter{step}{1} (\Roman{step}) Firstly, we claim that for all $x<y$,
\begin{equation}\label{eq:9}
\frac{m_{1}^{-1}(x)\cdot m_{2}^{-1}(x)}{m_{1}^{-1}(y)\cdot m_{2}^{-1}(y)}\leq D(x,y) \leq \mbox{min}\left(\frac{m_{1}^{-1}(x)}{m_{1}^{-1}(y)}, \frac{m_{2}^{-1}(x)}{m_{2}^{-1}(y)}\right).
\end{equation}
Indeed, since $0<\frac{m_{1}^{-1}(x_{i-1})}{m_{1}^{-1}(x_{i})}<1$ and $0<\frac{m_{2}^{-1}(x_{i-1})}{m_{2}^{-1}(x_{i})}<1$, we have
$$h(x_{i-1}, x_{i})=\mbox{min}\left(\frac{m_{1}^{-1}(x_{i-1})}{m_{1}^{-1}(x_{i})},\frac{m_{2}^{-1}(x_{i-1})}{m_{2}^{-1}(x_{i})}\right)>\frac{m_{1}^{-1}(x_{i-1})\cdot m_{2}^{-1}(x_{i-1})}{m_{1}^{-1}(x_{i})\cdot m_{2}^{-1}(x_{i})},$$
which implies
$$h(x_{0}, \ldots, x_{n})>\frac{m_{1}^{-1}(x_{0})\cdot m_{2}^{-1}(x_{0})}{m_{1}^{-1}(x_{n})\cdot m_{2}^{-1}(x_{n})}.$$
Hence, by Eq.(\ref{eq:6}) we get $D(x,y)\geq\frac{m_{1}^{-1}(x)\cdot m_{2}^{-1}(x)}{m_{1}^{-1}(y)\cdot m_{2}^{-1}(y)}$ for all $x<y$. Meanwhile, it is obvious that $D(x,y)\leq h(x,y)=\mbox{min}\left(\frac{m_{1}^{-1}(x)}{m_{1}^{-1}(y)}, \frac{m_{2}^{-1}(x)}{m_{2}^{-1}(y)}\right).$ \\
\stepcounter{step} (\Roman{step}) Secondly, it is evident that $D(x,y)\leq D(x,z)\cdot D(z,y)$ for any $z\in(x,y)$. On the other hand, for any sequence $x=x_{0}<x_{1}<\cdots <x_{n}=y$, we distinguish two cases as below.
\begin{itemize}
  \item If $z=x_{i}$ for some $i\in\{1,2,\ldots,n-1\}$, then
  $$h(x_{0},\ldots,x_{n})=h(x_{0},\ldots,x_{i})\cdot h(x_{i},\ldots,x_{n})=h(x_{0},\ldots,z)\cdot h(z,\ldots,x_{n}).$$
  \item If $z\in(x_{i},x_{i+1})$ for some $i\in\{0,1,\ldots,n-1\}$, then from Eq.(\ref{eq:7}) we have
  $$h(x_{i},z,x_{i+1})=h(x_{i},z)\cdot h(z,x_{i+1})\leq \frac{m_{1}^{-1}(x_{i})}{m_{1}^{-1}(z)}\cdot \frac{m_{1}^{-1}(z)}{m_{1}^{-1}(x_{i+1})}=\frac{m_{1}^{-1}(x_{i})}{m_{1}^{-1}(x_{i+1})},$$
  and similarly $h(x_{i},z,x_{i+1})\leq\frac{m_{2}^{-1}(x_{i})}{m_{2}^{-1}(x_{i+1})}$, thus
  \begin{align*} h(x_{0},\ldots,x_{n})&=h(x_{0},\ldots,x_{i})\cdot h(x_{i},x_{i+1})\cdot h(x_{i+1},\ldots,x_{n})\\ &\geq h(x_{0},\ldots,x_{i})\cdot h(x_{i},z,x_{i+1})\cdot h(x_{i+1},\ldots,x_{n})\\ &= h(x_{0},\ldots,x_{i})\cdot h(x_{i},z)\cdot h(z,x_{i+1})\cdot h(x_{i+1},\ldots,x_{n})\\ &=h(x_{0},\ldots,z)\cdot h(z,\ldots,x_{n}).\end{align*}
\end{itemize}
Therefore, $D(x,y)\geq D(x,z)\cdot D(z,y)$ for any $z\in(x,y)$. In summary,
\begin{equation}\label{eq:10}
D(x,y)=D(x,z)\cdot D(z,y)
\end{equation}
for any $1\leq x<z<y\leq \infty$. \\
\stepcounter{step} (\Roman{step}) Furthermore, we assert that $m:[1,\infty]\rightarrow[1,\infty]$ is a strictly increasing bijection. In fact, $D(x,y)<1$ for any $x<y$ since both $m_{1}^{-1}$ and $m_{2}^{-1}$ are strictly increasing bijections, and from Eq.(\ref{eq:10}) we have
$$D(1,y)=D(1,x)\cdot D(x,y),$$
i.e.,
\begin{equation}\label{eq:11}\frac{1}{m(y)}=\frac{1}{m(x)}\cdot D(x,y)
\end{equation}
 because of Eq.(\ref{eq:8}), hence $m(y)>m(x)$ whenever $x<y$. Therefore, $m$ is strictly increasing.

Additionally, the continuity of $m_{1}^{-1}$ and $m_{2}^{-1}$ yields that for $1\leq x<y\leq \infty$,
$$\lim_{x\rightarrow y^{-}}D(x,y)=\lim_{x\rightarrow y^{-}}h(x,y)=\lim_{y\rightarrow x^{+}}D(x,y)=\lim_{y\rightarrow x^{+}}h(x,y)=1.$$
Further, Eqs.\eqref{eq:9} and \eqref{eq:11} imply that
$$m(y)\cdot\frac{m_{1}^{-1}(x)\cdot m_{2}^{-1}(x)}{m_{1}^{-1}(y)\cdot m_{2}^{-1}(y)}\leq m(x) \leq h(x,y)\cdot m(y),$$
and consequently for all $1<y\leq\infty$,
$$m(y)=\lim_{x\rightarrow y^{-}}m(y)\cdot \frac{m_{1}^{-1}(x)\cdot m_{2}^{-1}(x)}{m_{1}^{-1}(y)\cdot m_{2}^{-1}(y)}\leq \lim_{x\rightarrow y^{-}}m(x)\leq \lim_{x\rightarrow y^{-}}h(x,y)\cdot m(y)=m(y),$$
i.e., $\displaystyle\lim_{x\rightarrow y^{-}}m(x)=m(y)$. Similarly, we have $\displaystyle\lim_{y\rightarrow x^{+}}m(y)=m(x)$ for all $1<x<\infty$.

Note that $m(1)=1$ and $m(\infty)=\infty$ since $D(1,\infty)=0$ and $\frac{1}{0}=\infty$. Therefore, $m$ is continuous on $[1,\infty]$.

In summary, $m:[1,\infty]\rightarrow[1,\infty]$ is a strictly increasing bijection.\\
\stepcounter{step} (\Roman{step}) Finally, we shall show that both $m\circ m_{1}$ and $m\circ m_{2}$ are superadditive.

From Eq.(\ref{eq:11}) we have for all $x<y$
\begin{equation*}
m(x)=D(x,y)\cdot m(y).
\end{equation*}
On the other hand, by Eq.\eqref{eq:9}, we have $$D(x,y)\leq \mbox{min}\left(\frac{m_{1}^{-1}(x)}{m_{1}^{-1}(y)}, \frac{m_{2}^{-1}(x)}{m_{2}^{-1}(y)}\right)\leq \frac{m_{1}^{-1}(x)}{m_{1}^{-1}(y)}.$$ Thus for all $x,y\in[1,\infty]$,
\begin{align*}
m(m_{1}(x))&=D(m_{1}(x), m_{1}(x+y))\cdot m(m_{1}(x+y))\\
&\leq \frac{m_{1}^{-1}(m_{1}(x))}{m_{1}^{-1}(m_{1}(x+y))}\cdot m(m_{1}(x+y))\\
&=\frac{x}{x+y}m(m_{1}(x+y)),
\end{align*}
and
$$m(m_{1}(y))\leq \frac{y}{x+y}m(m_{1}(x+y)),$$
yielding $m\circ m_{1}(x+y)\geq m\circ m_{1}(x)+m\circ m_{1}(y)$, i.e., $m\circ m_{1}$ is superadditive. Similarly we can show the superadditivity of $m\circ m_{2}$ from $D(x,y)\leq \mbox{min}\left(\frac{m_{1}^{-1}(x)}{m_{1}^{-1}(y)}, \frac{m_{2}^{-1}(x)}{m_{2}^{-1}(y)}\right)\leq \frac{m_{2}^{-1}(x)}{m_{2}^{-1}(y)}$ for all $x,y\in[1,\infty]$.
\end{proof}

By induction, the following corollary immediately follows from Theorem \ref{theorem:4.1}.
\begin{corollary}\label{coro4.2}
Let $S_{i}:[0,1]^2\rightarrow[0,1]$, $i=1,2,\ldots,n$, be a finite family of non-constant proper and strict t-subnorms with continuous, strictly decreasing additive generators. Then there exists a non-constant proper and strict t-subnorm $S_{u}:[0,1]^2\rightarrow[0,1]$ generated by a continuous, strictly decreasing additive generator such that
$$S_{u}\geq \max(S_{1}, S_{2},\ldots,S_{n}).$$
\end{corollary}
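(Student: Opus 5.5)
The plan is to argue by induction on $n$, with Theorem \ref{theorem:4.1} as the only nontrivial ingredient. For $n=1$ we may take $S_{u}=S_{1}$, since $S_1$ is itself a non-constant proper and strict t-subnorm with a continuous, strictly decreasing additive generator. For $n=2$ the statement is exactly Theorem \ref{theorem:4.1}.

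For the inductive step, assume $n\geq 2$ and that the claim holds for every family of $n$ such t-subnorms. Let $S_1,\ldots,S_{n+1}$ be given. By the induction hypothesis there is a non-constant proper and strict t-subnorm $S'$, generated by a continuous, strictly decreasing additive generator, with $S'\geq\max(S_1,\ldots,S_n)$. The pair $S'$, $S_{n+1}$ again satisfies all the hypotheses of Theorem \ref{theorem:4.1}: both are non-constant, proper and strict, and both have continuous, strictly decreasing additive generators. Theorem \ref{theorem:4.1} therefore supplies a t-subnorm $S_u$ of the same kind with $S_u\geq\max(S',S_{n+1})$. Transitivity of the pointwise order then gives
\begin{equation*}
S_u\geq \max(S',S_{n+1})\geq\max(S_1,\ldots,S_{n+1}),
\end{equation*}
which completes the induction.

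The only point needing care, and hence the main obstacle, is that the class is closed under the operation, so the induction can be iterated. This is guaranteed precisely because Theorem \ref{theorem:4.1} outputs an upper bound with all the required attributes: non-constant, proper, strict, and generated by a continuous, strictly decreasing additive generator. As a side remark, one could avoid induction by defining $D$ in the proof of Theorem \ref{theorem:4.1} with a minimum over $n$ ratios $m_i^{-1}(x_{k-1})/m_i^{-1}(x_k)$. The same four steps would go through, with the lower bound in Eq.\eqref{eq:9} replaced by the product of all $n$ ratios. The inductive route is shorter, however.
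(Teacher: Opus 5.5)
Your proof is correct and follows the paper's route: the paper simply says the corollary follows from Theorem \ref{theorem:4.1} by induction, and you have written that induction out. You also correctly identify the key point, namely that Theorem \ref{theorem:4.1} returns an upper bound in the same class as its inputs; your side remark about building $D$ with a minimum over all $n$ ratios is sound but not needed.
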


By duality, we have a completely analogous result about the lower bounds of t-superconorms generated by continuous, strictly increasing additive generators as the following corollary.
\begin{corollary}\label{coro4.3}
Let $M_{i}:[0,1]^2\rightarrow[0,1]$, $i=1,2,\ldots,n$, be a finite family of non-constant proper and strict t-superconorms with continuous, strictly increasing additive generators. Then there exists a non-constant proper and strict t-superconorm $M_{l}:[0,1]^2\rightarrow[0,1]$ generated by a continuous, strictly increasing additive generator such that
$$M_{l}\leq \min(M_{1}, M_{2},\ldots,M_{n}).$$
\end{corollary}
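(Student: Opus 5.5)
The plan is to obtain Corollary \ref{coro4.3} from Corollary \ref{coro4.2} by the standard duality $S\mapsto S^{d}$, $S^{d}(x,y)=1-S(1-x,1-y)$, which interchanges t-subnorms and t-superconorms. For each $M_i$ I set $S_i(x,y)=1-M_i(1-x,1-y)$. I then check that $S_i$ is a non-constant proper and strict t-subnorm with a continuous, strictly decreasing additive generator, apply Corollary \ref{coro4.2}, and dualize the resulting upper bound back.

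\emph{Step 1: transporting generators.} Take a continuous, strictly increasing additive generator $g_i:[0,1]\to[0,\infty]$ of $M_i$. By analogy with Definition \ref{def2.8}, this means $M_i(x,y)=g_i^{(-1)}(g_i(x)+g_i(y))$, where the pseudo-inverse of an increasing function is $g^{(-1)}(y)=\sup\{x\mid g(x)<y\}$; in the continuous case this is $g_i^{-1}(\min(g_i(1),\cdot))$. Define $s_i(x)=g_i(1-x)$. This $s_i$ is continuous and strictly decreasing, and a direct substitution gives
\[
1-M_i(1-x,1-y)=1-g_i^{-1}\bigl(\min(g_i(1),g_i(1-x)+g_i(1-y))\bigr)=s_i^{-1}\bigl(\min(s_i(0),s_i(x)+s_i(y))\bigr)=S_i(x,y).
\]
So $s_i$ generates $S_i$. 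Commutativity, associativity and monotonicity carry over under duality, and $M_i\ge\max$ becomes $S_i\le\min$.

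\emph{Step 2: transporting the adjectives.} By the dual reading of the definitions, $M_i$ is proper iff $M_i(0,0)>0$, i.e.\ iff $g_i(0)>0$. This is equivalent to $s_i(1)>0$, i.e.\ $S_i$ is proper by Proposition \ref{pro2.1}(ii). Similarly, strictness of $M_i$ corresponds to $g_i(1)=\infty=s_i(0)$, which is Proposition \ref{pro2.1}(iii). Non-constancy of $M_i$ (that is, $M_i\ne$ the constant $1$) corresponds to $S_i\neq Z$.

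\emph{Step 3: applying Corollary \ref{coro4.2} and dualizing back.} Corollary \ref{coro4.2} gives $S_u$ with generator $s$ satisfying $S_u\ge\max_i S_i$. Set $g(x)=s(1-x)$ and $M_l(x,y)=1-S_u(1-x,1-y)$. By Step 1 read in reverse, $g$ is a continuous, strictly increasing generator of $M_l$. By Step 2, $M_l$ is a non-constant proper and strict t-superconorm. Finally, $M_l(x,y)=1-S_u(1-x,1-y)\le 1-S_i(1-x,1-y)=M_i(x,y)$ for every $i$, which is the claimed inequality.

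The only step needing care is Step 2. The paper states the notions "proper", "strict" and "non-constant" for t-superconorms only implicitly through duality. So I would fix them explicitly as the duals of Definition \ref{def2.7} and of properness, and verify the endpoint conditions on $g_i$ through the dual of Proposition \ref{pro2.1}. The rest is routine substitution.
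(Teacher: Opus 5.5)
Your proposal is correct and follows the paper's route. The paper obtains Corollary \ref{coro4.3} simply ``by duality'' from Corollary \ref{coro4.2}, and your Steps 1--3 make that duality explicit: you transport generators via $s_i(x)=g_i(1-x)$, match properness, strictness and non-constancy through the endpoint conditions of Proposition \ref{pro2.1}, and dualize the upper bound $S_u$ back to a lower bound $M_l$. One cosmetic simplification: properness transfers directly, since $M_i$ is a t-conorm exactly when its dual is a t-norm, so you need not invoke the dual of Mesiarov\'{a}'s $M(0,0)>0$ criterion.
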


Let $S_{1}:[0,1]^2\rightarrow[0,1]$ and $S_{2}:[0,1]^2\rightarrow[0,1]$ be two non-constant proper and strict t-subnorms with continuous, strictly decreasing additive generators. Then from the proof of Theorem \ref{theorem:4.1}, we can summarize the following algorithm for obtaining an upper bound of both $S_{1}$ and $S_{2}$.

\begin{algorithm}[H]
\caption{\textbf{Finding an upper bound for $S_{1}$ and $S_{2}$}}
\label{alg:4.1}
\noindent
\begin{tabular}{p{0.15\linewidth}p{0.75\linewidth}}
\toprule
\textbf{Input} & Two non-constant proper and strict t-subnorms $S_{1}, S_{2}$ with their respective continuous, strictly decreasing additive generators $s_1$ and $s_2$ \\
\midrule
\textbf{Output} & A non-constant proper and strict t-subnorm $S_{u}$ with a continuous, strictly decreasing additive generator $s_n$\\
\midrule
\textbf{Step 1:} & Compute the normalized additive generators $s_{n_{1}}$ and $s_{n_{2}}$ of $S_{1}$ and $S_{2}$ by Eq.\eqref{eq5}, respectively. \\
\textbf{Step 2:} & Compute $m_{i}:[1,\infty]\rightarrow[1,\infty]$ defined by $m_{i}(x)=(f\circ s_{n_{i}}^{-1})(x)$, $i=1,2$, where $f:[0,1]\rightarrow[1,\infty]$ given by $f(x)=\frac{1}{x}$. \\
\textbf{Step 3:} & Compute $m_{i}^{-1}$, $i=1,2.$ \\
\textbf{Step 4:} & For all $x,y\in[1,\infty]$ with $x<y$, compute $D(x,y)$ defined by Eq.(\ref{eq:6}). \\
\textbf{Step 5:} & Compute $m:[1,\infty]\rightarrow[1,\infty]$ defined by Eq.(\ref{eq:8}). \\
\textbf{Step 6:} & Compute the normalized additive generators $s_{n}:[0,1]\rightarrow[1,\infty]$ by $s_{n}(x)=(m\circ f)(x)$. \\
\textbf{Step 7:} & Compute $S_{u}:[0,1]^2\rightarrow[0,1]$ by $S_{u}(x,y)=s_{n}^{(-1)}(s_{n}(x) + s_{n}(y))$. \\
\textbf{Step 8:} & End. \\
\bottomrule
\end{tabular}
\end{algorithm}

 Although Algorithm \ref{alg:4.1} isn't efficient because of Step 4, from the practical point of view, Step 4 of Algorithm 4.1 can be implemented through a numerical iteration method, whose core idea is to use an equidistant partition of the interval $[1, \infty]$ and repeatedly double the number of subintervals $[x_{i},x_{i+1}]$ with $i\in\{1,2,\ldots,n-1\}$, monotonically approximating the infimum until the absolute difference between two consecutive computational results falls below a given precision threshold. Even if the number of iterations required by the above method may grow logarithmically with the required precision, it is completely feasible for commonly used engineering-grade precision ($\varepsilon = 10^{-6}$ to $10^{-8}$). In particular, the following remark tells us the case that we can speed Algorithm \ref{alg:4.1}.
\begin{remark}\label{remak4.3}
In the proof of Theorem \ref{theorem:4.1}, if both $m_{1}^{-1}$ and $m_{2}^{-1}$ are further piecewise differentiable and
$$\frac{(m_{1}^{-1})'(x)}{m_{1}^{-1}(x)}\geq \frac{(m_{2}^{-1})'(x)}{m_{2}^{-1}(x)}$$
for all $x\in(x_{i},x_{i+1})$, then
$$D(x,y)=\frac{m_{1}^{-1}(x)}{m_{1}^{-1}(y)}$$
for all $x,y\in[x_{i},x_{i+1}]$ with $x<y$.
\end{remark}
\begin{proof}
Let $f(x)=\ln m_{1}^{-1}(x)$ and $g(x)=\ln m_{2}^{-1}(x)$. Then $\frac{(m_{1}^{-1})'(x)}{m_{1}^{-1}(x)}\geq \frac{(m_{2}^{-1})'(x)}{m_{2}^{-1}(x)}$ is equivalent to $f'(x)\geq g'(x)$ for all $x\in(x_{i},x_{i+1})$, and consequently
$$f(y)-f(x)=\int_{x}^{y}f'(t)\mbox{d}t\geq \int_{x}^{y}g'(t)\mbox{d}t=g(y)-g(x)$$
for all $x,y\in[x_{i},x_{i+1}]$ with $x<y$. Further,
$$\ln m_{1}^{-1}(y)-\ln m_{1}^{-1}(x)\geq \ln m_{2}^{-1}(y)-\ln m_{2}^{-1}(x),$$
i.e., $$\ln\frac{m_{1}^{-1}(x)}{m_{1}^{-1}(y)}\leq \ln\frac{m_{2}^{-1}(x)}{m_{2}^{-1}(y)},$$
or equivalently,
$$\frac{m_{1}^{-1}(x)}{m_{1}^{-1}(y)}\leq \frac{m_{2}^{-1}(x)}{m_{2}^{-1}(y)}.$$
Therefore, by Eqs.\eqref{eq:6} and \eqref{eq:7}, a simple calculation results in
$$D(x,y)=\frac{m_{1}^{-1}(x)}{m_{1}^{-1}(y)}.$$
\end{proof}

\begin{example}\label{exp4.2}\emph{Let $S_{1}:[0,1]^2\rightarrow[0,1]$ with $S_{1}(x,y)=\frac{1}{2}xy$ and $S_{2}:[0,1]^2\rightarrow[0,1]$ with $$S_{2}(x,y)=\frac{2}{1+\left[(\frac{2-x}{x})^{0.6}+(\frac{2-y}{y})^{0.6}\right]^{\frac{1}{0.6}}},$$
generated by their respective continuous, strictly decreasing additive generators $s_{1}:[0,1]\rightarrow[0,\infty]$ with $s_{1}(x)=-\ln \frac{x}{2}$ and $s_{2}:[0,1]\rightarrow[0,\infty]$ with $s_{2}(x)=(\frac{2}{x}-1)^{0.6}$. Obviously, both $S_{1}$ and $S_{2}$ are proper strict t-subnorms and they are incomparable (see Fig. \ref{fig2}). \\
Step1: The normalized additive generators are $s_{n_{1}}:[0,1]\rightarrow[1, \infty]$ with $s_{n_{1}}(x)=-\frac{\ln x}{\ln2}+1$ and $s_{n_{2}}:[0,1]\rightarrow[1, \infty]$ with $s_{n_{2}}(x)=(\frac{2}{x}-1)^{0.6}$, respectively. \\
Step2: $m_{1}(x)=f\circ s_{n_{1}}^{-1}(x)=\frac{1}{2^{1-x}}$ and $m_{2}(x)=f\circ s_{n_{2}}^{-1}(x)=\frac{x^{\frac{5}{3}}+1}{2}$. \\
Step3: $m_{1}^{-1}(x)=1+\log_{2}x$ and $m_{2}^{-1}(x)=(2x-1)^{\frac{3}{5}}$. \\
Step4: Obviously, $m_{1}^{-1}$ and $m_{2}^{-1}$ are differentiable, and $\frac{(m_{1}^{-1})'(x)}{m_{1}^{-1}(x)}=\frac{1}{x\ln(2x)}$, $\frac{(m_{2}^{-1})'(x)}{m_{2}^{-1}(x)}=\frac{6}{5(2x-1)}$. The transcendental equation $\frac{1}{x\ln(2x)}=\frac{6}{5(2x-1)}$ has a unique solution, denoted as $x_{0}$ ($x_{0}\approx 1.544$).
\begin{itemize}
  \item When $x\in[1,x_{0})$, we have $\frac{(m_{1}^{-1})'(x)}{m_{1}^{-1}(x)}\geq \frac{(m_{2}^{-1})'(x)}{m_{2}^{-1}(x)}$, then $D(x,y)=\frac{m_{1}^{-1}(x)}{m_{1}^{-1}(y)}=\frac{1+\log_{2}x}{1+\log_{2}y}$ for $1\leq x<y\leq x_{0}$ by Remark \ref{remak4.3}.
  \item When $x\in[x_{0},\infty]$, we have $\frac{(m_{1}^{-1})'(x)}{m_{1}^{-1}(x)}\leq \frac{(m_{2}^{-1})'(x)}{m_{2}^{-1}(x)}$, then $D(x,y)=\frac{m_{2}^{-1}(x)}{m_{2}^{-1}(y)}=\left(\frac{2x-1}{2y-1}\right)^{\frac{3}{5}}$ for $x_{0}\leq x<y\leq \infty$ by Remark \ref{remak4.3}.
\end{itemize}
Step5: From Eqs.\eqref{eq:8} and \eqref{eq:10} we have
\begin{itemize}
  \item $m(x)=\frac{1}{D(1,x)}=1+\log_{2}x$ whenever $x\in[1,x_{0})$.
  \item $m(x)=\frac{1}{D(1,x)}=\frac{1}{D(1,x_{0})\cdot D(x_{0},x)}=(1+\log_{2}x_{0})\cdot\frac{(2x-1)^{\frac{3}{5}}}{(2x_{0}-1)^{\frac{3}{5}}}$ since $D(1,x_{0})=\frac{1}{1+\log_{2}x_{0}}$ and $D(x_{0},x)=\left(\frac{2x_{0}-1}{2x-1}\right)^{\frac{3}{5}}$ whenever $x\in[x_{0},\infty]$.
\end{itemize}
Step6: Because $s_{n}(x)=m\circ f(x)$ and $f(x)=\frac{1}{x}$, we get
\begin{equation*}
s_{n}(x)=
\begin{cases}
(1+\log_{2}x_{0})\cdot\frac{(\frac{2}{x}-1)^{\frac{3}{5}}}{(2x_{0}-1)^{\frac{3}{5}}} & \hbox{if }\ x\in[0,\frac{1}{x_{0}}],\\
1-\log_{2}x & \hbox{if }\ x\in[\frac{1}{x_{0}},1].
\end{cases}
\end{equation*} It is evident that $s_{n}$ is a strictly decreasing bijection from $[0,1]$ to $[1,\infty]$ (see Fig. \ref{fig3}).\\
Step7: Using Eq.(\ref{eq:1}) and substituting $s_{n}$, we can construct a proper and strict t-subnorm $S_{u}$. By Theorem \ref{theorem:4.1}, $S_{u}\geq \max(S_{1}, S_{2})$. \\
Step8: End.}
\end{example}

\begin{figure}[H]
\centering
\includegraphics[width=0.7\textwidth, keepaspectratio]{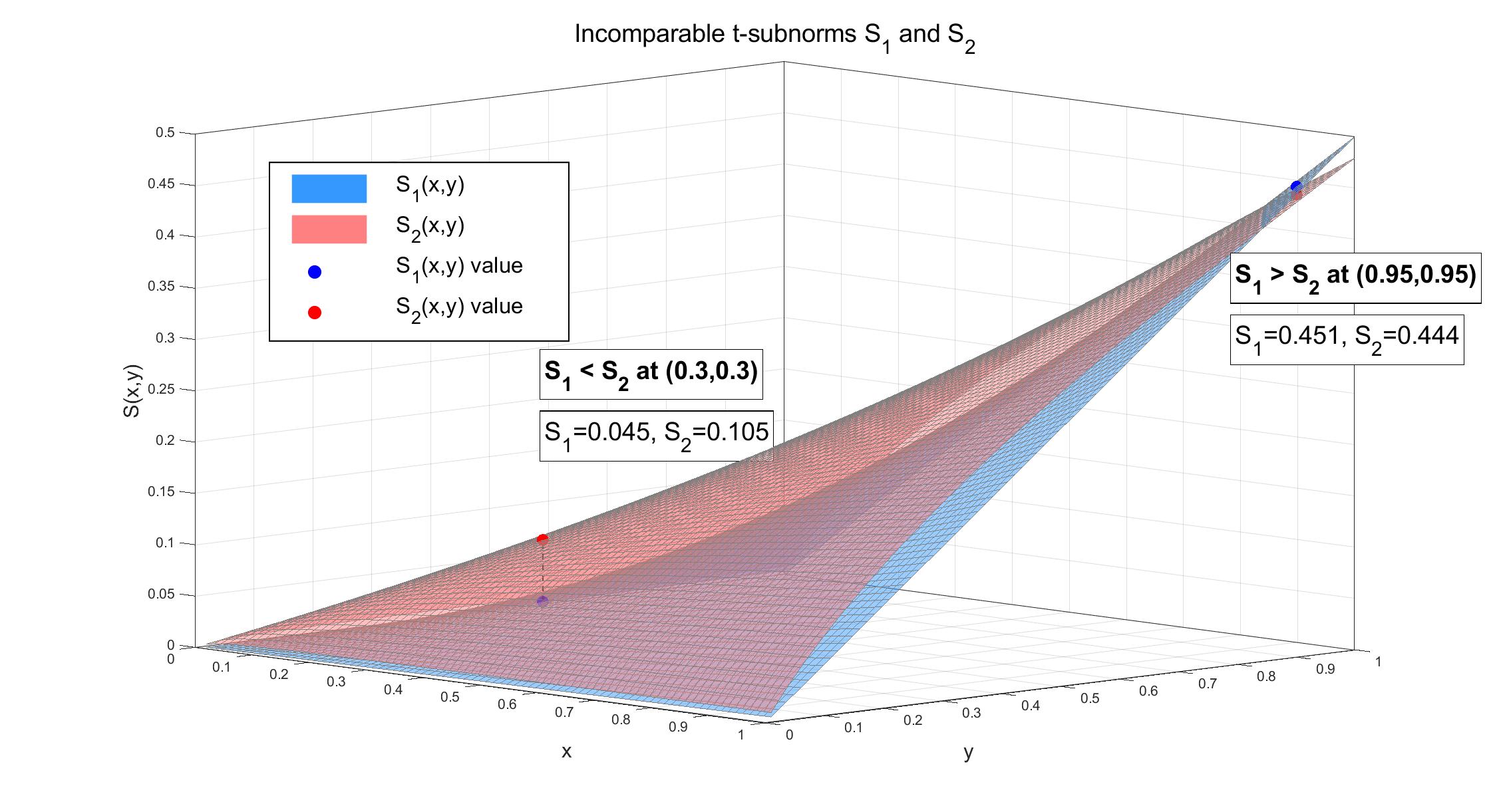}
\caption{3D plot of two proper and strict t-subnorms in Example \ref{exp4.2}.}
\label{fig2}
\end{figure}

\begin{figure}[H]
\centering
\includegraphics[width=0.6\textwidth, keepaspectratio]{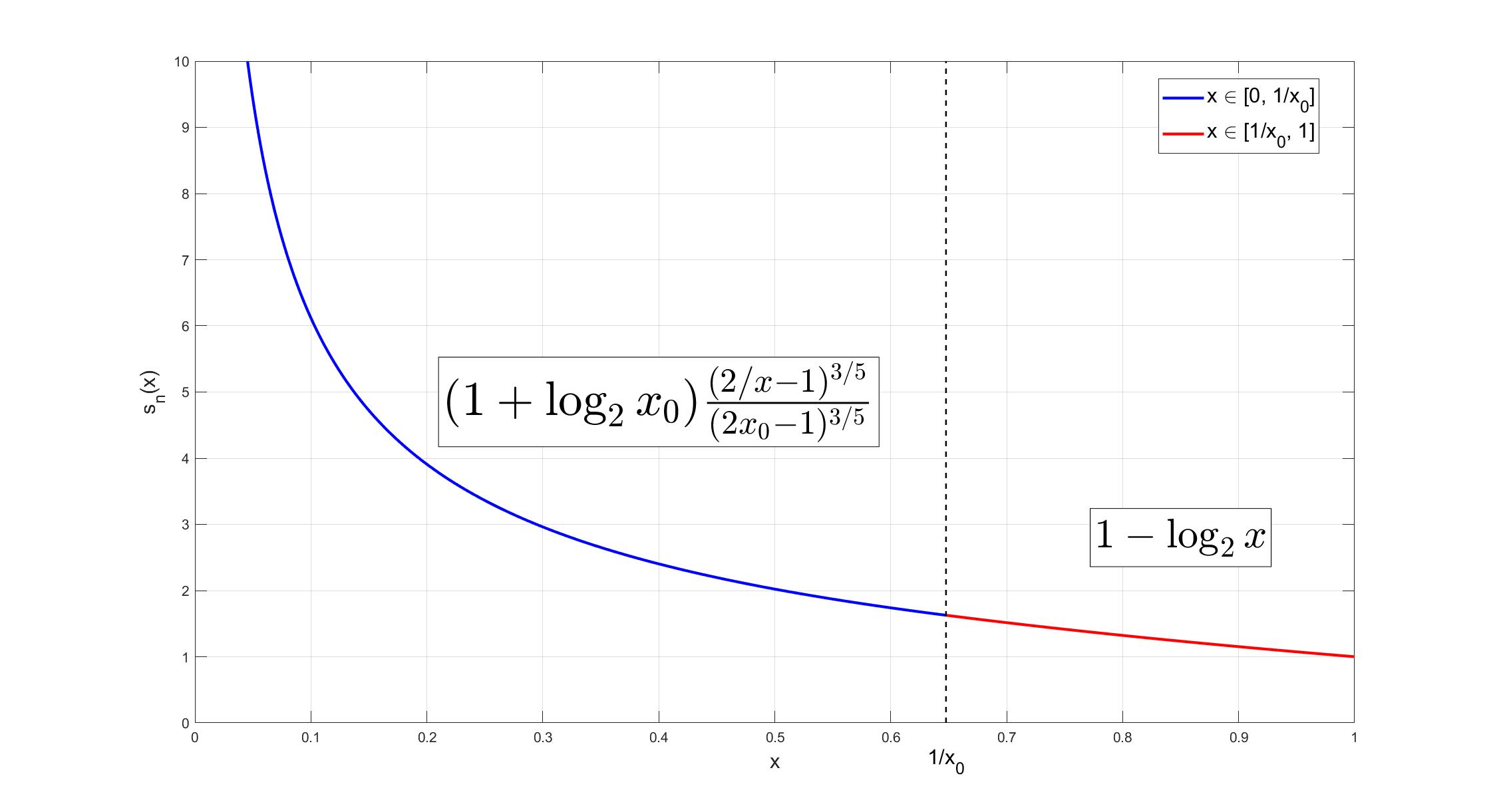}
\caption{Additive generator $s_{n}$ of $S_{u}$ in Example \ref{exp4.2}.}
\label{fig3}
\end{figure}

In summary, from Theorem \ref{thm4.1}, Proposition \ref{pro4.3}, Corollary \ref{coro4.1} and Theorem \ref{theorem:4.1} we immediately get the following consequence.
\begin{theorem}\label{theorem:4.2}
Let $S_{1}:[0,1]^2\rightarrow[0,1]$ and $S_{2}:[0,1]^2\rightarrow[0,1]$ be two t-subnorms with continuous, strictly decreasing additive generators. If both $S_{1}$ and $S_{2}$ are strict, then there exists a strict t-subnorm $S_{u}:[0,1]^2\rightarrow[0,1]$ generated by a continuous, strictly decreasing additive generator such that
$$S_{u}\geq \max(S_{1}, S_{2}).$$
\end{theorem}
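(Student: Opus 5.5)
I will split according to whether $S_{1}$ and $S_{2}$ are proper t-subnorms or t-norms, using Proposition \ref{pro2.1} (ii)--(iv) to read off the relevant type from the generators.

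First I handle the degenerate case of a constant subnorm. If some $S_i=Z$, then by Proposition \ref{pro2.1} (iv) we have $s_i(0)\le 2s_i(1)$. Strictness forces $s_i(0)=\infty$, so this is impossible. Hence both $S_{1}$ and $S_{2}$ are non-constant, and every earlier result stated for non-constant subnorms applies.

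\emph{Case 1: both are proper.} Then Theorem \ref{theorem:4.1} gives directly a non-constant, proper and strict t-subnorm $S_u$, generated by a continuous, strictly decreasing additive generator, with $S_u\ge\max(S_1,S_2)$.

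\emph{Case 2: both are t-norms.} By Proposition \ref{pro2.1} (ii),(iii), $s_i(1)=0$ and $s_i(0)=\infty$, so $S_1,S_2$ are strict t-norms, hence continuous Archimedean. Theorem \ref{thm4.1} (in the form of the classical result of \cite{EP2000}) supplies a continuous Archimedean t-norm $T\ge\max(S_1,S_2)$. The one thing to check is that $T$ can be taken strict. Since $T\ge S_1$ and $S_1$ is strict, $T(x,y)\ge S_1(x,y)>0$ for $x,y>0$. So $T$ has no zero divisors and cannot be nilpotent. A continuous Archimedean t-norm is either strict or nilpotent, so $T$ is strict. Its continuous additive generator $t$ from Theorem \ref{theorem:2.1} is strictly decreasing with $t(0)=\infty$, which is exactly the required form, and we take $S_u=T$.

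\emph{Case 3: one is proper and the other is a t-norm.} By Corollary \ref{coro4.1} there is a continuous Archimedean t-norm $T\ge\max(S_1,S_2)$. Looking at its proof, $T$ is obtained from Theorem \ref{thm4.1} applied to $T_1$ and $T_2$. Here $T_i$ is $S_i$ itself when $S_i$ is a t-norm, and otherwise $T_i$ has generator $s_{n_i}-1$. That generator satisfies $t_i(0)=\infty$ because $S_i$ is strict, so each $T_i$ is a strict t-norm. The positivity argument of Case 2 again rules out nilpotency of $T$, so $T$ is strict and we take $S_u=T$.

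The main obstacle is Cases 2 and 3: Theorem \ref{thm4.1} only promises a continuous Archimedean upper bound, not a strict one. My plan is to resolve this by the no-zero-divisor argument above, which uses only $T\ge S_1$ and the strictness of $S_1$. Should a more constructive route be preferred, one could instead rerun the construction of Theorem \ref{theorem:4.1} with the generators $t_i$ (taking $f$ onto $[0,\infty]$ appropriately), but the positivity argument seems simpler.
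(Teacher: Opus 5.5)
Your proof is correct and matches the paper's route: the paper gets this statement by combining Theorem~\ref{theorem:4.1} (both proper) with Theorem~\ref{thm4.1} and Corollary~\ref{coro4.1} (via Proposition~\ref{pro4.3}) for the cases involving a t-norm, which is your three-case split. Your no-zero-divisor argument shows that the continuous Archimedean t-norm bound is strict, because it dominates a strict $S_i$ and is therefore positive on $(0,1]^2$; the paper leaves this point implicit, and your detour through the proof of Corollary~\ref{coro4.1} in Case 3 is unnecessary, since that argument needs only $T\ge S_1$.
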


\subsection{In the case of proper and nilpotent t-subnorms}
%In the case that both $S_{1}$ and $S_{2}$ are proper and nilpotent, we partially demonstrate the existence of an upper bound and, moreover, provide an algorithm for its construction.
Analogous to the proof of Theorem \ref{theorem:3.1}, we have the following lemma.
\begin{lemma}\label{lemm4.3}
Let $S_{1}:[0,1]^2\rightarrow[0,1]$ and $S_{2}:[0,1]^2\rightarrow[0,1]$ be two non-constant proper and nilpotent t-subnorms with their respective normalized additive generators $s_{n_{1}}:[0,1]\rightarrow[1, s_{n_{1}}(0)]$ and $s_{n_{2}}:[0,1]\rightarrow[1, s_{n_{2}}(0)]$. Then $S_{1}\leq S_{2}$ if and only if the function $s_{_{n_{2}}}\circ s_{n_{1}}^{-1}:[1,s_{n_{1}}(0)]\rightarrow[1, s_{n_{2}}(0)]$ is superadditive, i.e., for all $u,v\in[1, s_{n_{1}}(0)]$ with $u+v\in[1, s_{n_{1}}(0)]$ we have
\begin{equation*}
s_{n_{2}}\circ s_{n_{1}}^{-1}(u+v)\geq s_{n_{2}}\circ s_{n_{1}}^{-1}(u)+s_{n_{2}}\circ s_{n_{1}}^{-1}(v).
\end{equation*}
\end{lemma}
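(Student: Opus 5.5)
We would derive Lemma \ref{lemm4.3} from Theorem \ref{theorem:3.1} (via Remark \ref{remark:4.1}(ii)) by passing to the inverse function. First, we normalize. By Proposition \ref{pro2.1}(v), $s_{n_{i}}=s_i/s_i(1)$ generates the same $S_i$, so Theorem \ref{theorem:3.1} applies directly to $s_{n_{1}},s_{n_{2}}$. It gives that $S_{1}\leq S_{2}$ if and only if $\varphi:=s_{n_{1}}\circ s_{n_{2}}^{-1}:[1,s_{n_{2}}(0)]\rightarrow[1,s_{n_{1}}(0)]$ is subadditive, i.e.
$$\varphi(a+b)\leq\varphi(a)+\varphi(b)\quad\text{whenever } a,b,a+b\in[1,s_{n_{2}}(0)].$$
Since both normalized generators are strictly decreasing continuous bijections onto $[1,s_{n_{i}}(0)]$, $\varphi$ is a strictly increasing continuous bijection. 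Its inverse is $\psi:=s_{n_{2}}\circ s_{n_{1}}^{-1}:[1,s_{n_{1}}(0)]\rightarrow[1,s_{n_{2}}(0)]$. The task therefore reduces to showing that $\varphi$ is subadditive on its domain exactly when $\psi$ is superadditive on its domain.

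Suppose $\varphi$ is subadditive, and take $u,v\in[1,s_{n_{1}}(0)]$ with $u+v\in[1,s_{n_{1}}(0)]$. Put $a=\psi(u)$ and $b=\psi(v)$. Then:
\begin{itemize}
\item If $a+b\leq s_{n_{2}}(0)$, subadditivity gives $\varphi(a+b)\leq u+v$. Applying the increasing map $\psi$ yields $a+b\leq\psi(u+v)$, which is the superadditivity inequality.
\item If $a+b>s_{n_{2}}(0)$, then $a+b\geq s_{n_{2}}(0)\geq\psi(u+v)$ trivially, which is the wrong direction.
\end{itemize}
The converse implication is symmetric: given $a,b,a+b$ in $[1,s_{n_{2}}(0)]$, set $u=\varphi(a)$ and $v=\varphi(b)$. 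If $u+v\leq s_{n_{1}}(0)$, superadditivity of $\psi$ gives $\psi(u+v)\geq a+b$, hence $\varphi(a+b)\leq u+v$. If instead $u+v>s_{n_{1}}(0)$, then $\varphi(a+b)\leq s_{n_{1}}(0)<u+v$ holds trivially. So this direction closes completely.

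The main obstacle is the boundary case $a+b>s_{n_{2}}(0)$ in the first direction. Nilpotency makes the ranges bounded, so truncation at $s_{n_{i}}(0)$ matters, unlike the strict case in Lemma \ref{lemm4.2}. To handle it, I would not go through $\varphi$ there but argue directly from the order. Because $S_1\leq S_2$ and both generators are decreasing, we have
$$s_{n_{2}}(S_1(x,y))\geq s_{n_{2}}(S_2(x,y))=\min\bigl(s_{n_{2}}(0),\,s_{n_{2}}(x)+s_{n_{2}}(y)\bigr).$$
Choose $x=s_{n_{1}}^{-1}(u)$ and $y=s_{n_{1}}^{-1}(v)$. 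Since $u+v\leq s_{n_{1}}(0)$, we get $S_1(x,y)=s_{n_{1}}^{-1}(u+v)$. Hence
$$\psi(u+v)\geq\min\bigl(s_{n_{2}}(0),\,\psi(u)+\psi(v)\bigr).$$
When $\psi(u)+\psi(v)\leq s_{n_{2}}(0)$ this is exactly the desired inequality, which recovers the first case above.

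The remaining subcase, $u+v\leq s_{n_{1}}(0)$ but $\psi(u)+\psi(v)>s_{n_{2}}(0)$, only yields $\psi(u+v)=s_{n_{2}}(0)$, and I expect that alone does not suffice. I would try to exclude this configuration for comparable nilpotent subnorms, arguing that $S_2(x,y)=0$ while $S_1(x,y)>0$ (or $=0$ only at the boundary) would contradict $S_1\leq S_2$. If that exclusion fails, I would expect the lemma to need the superadditivity inequality read with a $\min(s_{n_{2}}(0),\cdot)$ truncation, mirroring Eq.~\eqref{eq:4}, and I would state the equivalence in that form.
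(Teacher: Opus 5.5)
\paragraph{Gap in the forward direction.} Your converse direction is fine: superadditivity of $\psi=s_{n_2}\circ s_{n_1}^{-1}$ gives subadditivity of $\varphi$, and then $S_1\le S_2$ by Theorem~\ref{theorem:3.1}. You also correctly located where the analogy with Theorem~\ref{theorem:3.1} breaks. In the forward direction the truncation $\min(s_{n_2}(0),\cdot)$ sits on the wrong side of the inequality. But you leave exactly that case open. Your fallback of restating the lemma with a truncated superadditivity would prove a different statement, not Lemma~\ref{lemm4.3}. So, as written, the implication $S_1\le S_2\Rightarrow$ ``$\psi$ superadditive'' is not established. The paper gives no argument here beyond ``analogous to Theorem~\ref{theorem:3.1}'', and this boundary step is precisely what that analogy does not supply for free.

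\paragraph{The missing idea.} The bad configuration cannot occur, by strict monotonicity, continuity and non-constancy. Write $c_i=s_{n_i}(0)$. By Proposition~\ref{pro2.1}(iv), non-constancy means $c_1,c_2>2$. Start from your (correct) inequality $\psi(u+v)\ge\min(c_2,\psi(u)+\psi(v))$ for $u,v\ge 1$, $u+v\le c_1$.
\begin{itemize}
\item If $u+v<c_1$, then $\psi(u+v)<\psi(c_1)=c_2$, because $\psi$ is a strictly increasing bijection onto $[1,c_2]$. So the minimum cannot be $c_2$, and you get $\psi(u)+\psi(v)\le\psi(u+v)$ at once. In other words, the conclusion $\psi(u+v)=s_{n_2}(0)$ that you considered insufficient already forces $u+v=c_1$.
\item Only the boundary $u+v=c_1$ remains. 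Since $c_1>2$, one of $u,v$, say $u$, exceeds $1$. Choose $u_k\in[1,u)$ with $u_k\to u$ and apply the previous case to $(u_k,v)$. Letting $k\to\infty$ and using continuity of $\psi$ gives $\psi(u)+\psi(v)\le\psi(c_1)=\psi(u+v)$.
\end{itemize}

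You can equally stay with $\varphi$. Suppose $a+b>c_2$ while $u+v\le c_1$. Choose $1\le a'\le a$ and $1\le b'\le b$ with $a'+b'=c_2$ and at least one inequality strict; this is possible since $c_2\ge 2$. Subadditivity and strict monotonicity then give $c_1=\varphi(c_2)\le\varphi(a')+\varphi(b')<\varphi(a)+\varphi(b)=u+v\le c_1$, a contradiction.

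Either way the lemma holds exactly as stated, and no truncated reformulation is needed. The same shrinking trick, using $s_2(0)\ge 2s_2(1)$, is what covers the case $u+v>s_2(0)$ in the converse of Theorem~\ref{theorem:3.1}, which you rely on. The paper's proof of that theorem does not check this case explicitly.
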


Based on Lemma \ref{lemm4.3}, we can establish the following theorem.

\begin{theorem}\label{theorem:4.3}
Let $S_{1}:[0,1]^2\rightarrow[0,1]$ and $S_{2}:[0,1]^2\rightarrow[0,1]$ be two non-constant proper and nilpotent t-subnorms with their respective normalized additive generators $s_{_{n_{1}}}$ and $s_{_{n_{2}}}$. If $s_{n_{1}}(0)=s_{_{n_{2}}}(0)$, then there exists a non-constant proper and nilpotent t-subnorm $S_{u}:[0,1]^2\rightarrow[0,1]$ generated by a continuous, strictly decreasing additive generator such that
$$S_{u}\geq \max(S_{1}, S_{2}).$$
\end{theorem}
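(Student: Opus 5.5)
Let $a=s_{n_1}(0)=s_{n_2}(0)\in(1,\infty)$. The condition $a>1$ holds by Remark \ref{remark:4.1}(i), and the fact that $S_1,S_2$ are non-constant gives $a>2$ by Proposition \ref{pro2.1}(iv). The plan mirrors the proof of Theorem \ref{theorem:4.1}, transported to the compact interval $[1,a]$. Using $f(x)=1/x$ is no longer appropriate, since $s_{n_i}^{-1}$ maps $[1,a]$ onto $[0,1]$. I will instead work directly with the strictly increasing bijections
$$m_i=s_{n_i}\circ s_{n_1}^{-1}\circ\cdots$$
More precisely, I take $g_i:[1,a]\to[0,1]$, $g_i=s_{n_i}^{-1}$, and reparametrize by $\phi(x)=a+1-\dots$. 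The cleanest version is this. By Lemma \ref{lemm4.3}, it suffices to find a strictly decreasing continuous bijection $s:[0,1]\to[1,a]$ such that both $s\circ s_{n_i}^{-1}:[1,a]\to[1,a]$ are superadditive on the admissible domain $\{u,v,u+v\in[1,a]\}$. Put $m_i=s_{n_i}\circ s_{n_1}^{-1}$... It is simpler still to set $p_i=s_{n_i}^{-1}\circ\psi$ with a common increasing reference. The essential point is that the composite maps $h_i=s_{n_i}^{-1}$ are decreasing bijections $[1,a]\to[0,1]$. I seek $s$ with $s(h_i(x))$ superadditive in $x$. Writing $m=s\circ h_1$ and $m_2'=h_1^{-1}\circ h_2$ reduces the problem to the following: given the identity $\mathrm{id}$ and a strictly increasing bijection $k=s_{n_1}\circ s_{n_2}^{-1}$ of $[1,a]$ onto itself (the hypothesis $s_{n_1}(0)=s_{n_2}(0)$ is exactly what makes this a self-bijection of $[1,a]$ fixing both endpoints), find a strictly increasing bijection $m:[1,a]\to[1,a]$ with $m$ and $m\circ k$ both superadditive.

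For this I copy the multiplicative path construction. For $1\le x<y\le a$ I set
$$D(x,y)=\inf\prod_{j=1}^n\min\Big(\frac{x_{j-1}}{x_j},\frac{k^{-1}(x_{j-1})}{k^{-1}(x_j)}\Big)$$
over chains $x=x_0<\dots<x_n=y$, with $D(x,x)=1$. I then put $m(x)=D(1,x)^{-1}\cdot c$, normalized afterwards. Steps (I)--(III) of the proof of Theorem \ref{theorem:4.1} go through verbatim. They give the two-sided bound \eqref{eq:9}, the multiplicativity $D(x,y)=D(x,z)D(z,y)$, and the facts that $m$ is continuous and strictly increasing with $m(1)=1$ (after taking $c=1$). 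Step (IV) then gives, for $u,v,u+v$ in the domain, $m(u)\le \frac{u}{u+v}m(u+v)$, and the same with $k^{-1}$ in place of the identity. These yield the superadditivity of $m$ and of $m\circ k$ on the admissible region.

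The main obstacle is that $m(a)=D(1,a)^{-1}=:b$ is finite but generally not equal to $a$. Consequently $m$ maps $[1,a]$ onto $[1,b]$ rather than onto $[1,a]$. Since superadditivity only needs $m(u+v)\ge m(u)+m(v)$, I will not insist that the range be $[1,a]$. I simply define $s=m\circ s_{n_1}:[0,1]\to[1,b]$. This is a continuous strictly decreasing function with $s(1)=1>0$ and $s(0)=b<\infty$, so it generates a proper nilpotent t-subnorm $S_u$ by Proposition \ref{pro2.1}. Two things must then be checked. First, $S_u$ is non-constant, i.e.\ $b>2$. This follows from superadditivity: $b=m(a)\ge m(1)+m(a-1)\ge 2+\dots>2$, using $a>2$ and the strict increase of $m$. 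Second, Lemma \ref{lemm4.3} (and the proof of Theorem \ref{theorem:3.1}) must still apply even though the two generators have different values at $0$; the lemma only requires its superadditivity condition on $[1,s_{n_i}(0)]$, which is exactly what we have. Finally, $s\circ s_{n_1}^{-1}=m$ and $s\circ s_{n_2}^{-1}=m\circ k$ are both superadditive, so Lemma \ref{lemm4.3} gives $S_i\le S_u$ for $i=1,2$.
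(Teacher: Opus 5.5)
Your proposal is correct and is essentially the paper's proof. The paper reparametrizes by the linear map $f(x)=(1-c)x+c$ rather than by $s_{n_1}$, but the path-infimum $D$ depends only on the ratios $s_{n_i}(t_{j-1})/s_{n_i}(t_j)$ along chains in $[0,1]$, so both choices give exactly the same generator; your explicit check that $b=m(a)\ge m(1)+m(a-1)>2$, which makes $S_u$ non-constant so that Lemma~\ref{lemm4.3} applies, fills in a point the paper only asserts. Before writing it up, remove the abandoned false starts (the fragments defining $m_i$, $\phi$ and $p_i$).
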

\begin{proof}Because $S_{1}$ and $S_{2}$ are proper and nilpotent, by Remark \ref{remark:4.1}(i) we have $1<s_{n_{1}}(0)=s_{_{n_{2}}}(0)< \infty$. Without loss of generality we assume that $s_{n_{1}}(0)=s_{_{n_{2}}}(0)=c$ with $c\in(1,\infty)$, namely, both $s_{_{n_{1}}}$ and $s_{_{n_{2}}}$ are continuous, strictly decreasing bijections from $[0,1]$ to $[1,c]$. Define a strictly decreasing bijection $f:[0,1]\rightarrow[1,c]$ by
$$f(x)=(1-c)x+c$$
and set $\xi_{i}=f\circ s_{n_{i}}^{-1}$, $i=1,2$. Then each $\xi_{i}$ is a strictly increasing bijection from $[1,c]$ to $[1,c]$. Denote its inverse by $\psi_{i}=\xi_{i}^{-1}=s_{n_{i}}\circ f^{-1}$. In the next, we shall construct a strictly increasing bijection $\xi:[1,c]\rightarrow[1,d]$ with $d>1$ such that both $\xi\circ \xi_{1}$ and $\xi\circ \xi_{2}$ are superadditive.

Indeed, for any finite sequence $1\leq x_{0}<x_{1}<\cdots <x_{n}\leq c$ define
\begin{equation*}
h(x_{0}, x_{1}, \ldots, x_{n-1}, x_{n})=\prod_{i=1}^{n}\mbox{min}\left(\frac{\psi_{1}(x_{i-1})}{\psi_{1}(x_{i})},\frac{\psi_{2}(x_{i-1})}{\psi_{2}(x_{i})}\right),
\end{equation*}
and for any $1\leq x\leq y\leq c$ let
\begin{equation}\label{eq:12}
D(x,y)=
\begin{cases}
1 & \hbox{if }\ x=y,\\
\mbox{inf}\{h(x_{0}, x_{1}, \ldots, x_{n-1}, x_{n})| x=x_{0}<x_{1}<\cdots <x_{n}=y \mbox{ with }n\in\mathbb{N}\} & \hbox{if }\ {x<y}.
\end{cases}
\end{equation}
Similar to Theorem \ref{theorem:4.1}, the function $D$ enjoys the following properties:
\begin{enumerate}
\item[(i)] For any $z\in(x,y)$, $D(x,y)=D(x,z)\cdot D(z,y)$.
\item[(ii)] For any $x<y$,
\begin{equation*}
\frac{\psi_{1}(x)\cdot\psi_{2}(x)}{\psi_{1}(y)\cdot\psi_{2}(y)}\leq D(x,y) \leq \mbox{min}\left(\frac{\psi_{1}(x)}{\psi_{1}(y)}, \frac{\psi_{2}(x)}{\psi_{2}(y)}\right).
\end{equation*}
  In particular, $D(x,y)<1$ whenever $x<y$.
\end{enumerate}
Now define a function $\xi:[1,c]\rightarrow[1,\infty)$ by
\begin{equation}\label{eq:13}
\xi(x)=\frac{1}{D(1,x)}.
\end{equation}
From (i) we obtain
\begin{equation*}
\xi(x)=\xi(y)\cdot D(x,y)
\end{equation*}
 for any $x<y$. Then $\xi(1)=1$ and, the function $\xi$ is strictly increasing since $D(x,y)<1$ whenever $x<y$. Set $d=\xi(c)=\frac{1}{D(1,c)}$. Thus $1< d <\infty$. Hence $\xi$ is a strictly increasing bijection from $[1,c]$ to $[1,d]$. The proof of the superadditivity of $\xi\circ \xi_{i}$, $i=1,2$, is entirely analogous to that of Theorem \ref{theorem:4.1}.

Finally, define a mapping $s_{n}:[0,1]\rightarrow[1,d]$ by $s_{n}=\xi\circ f$. Because $\xi$ is strictly increasing and $f$ strictly decreasing, $s_{n}$ is strictly decreasing. The fact that both $\xi$ and $f$ are continuous indicates that $s_{n}$ is continuous. Moreover, $s_{n}(1)=\xi\circ f(1)=\xi(1)=1$ and $s_{n}(0)=\xi\circ f(0)=\xi(c)=d$. Hence $s_{n}$ is a continuous, strictly decreasing bijection and therefore a normalized additive generator of a proper nilpotent non-constant t-subnorm $S_{u}$, in which $1< d <\infty$ guarantees the nilpotency of $S_{u}$ by Remark \ref{remark:4.1}(i)). Meanwhile, from $s_{n}\circ s_{n_{i}}^{-1}=(\xi\circ f)\circ(f^{-1}\circ\xi_i)=\xi\circ\xi_{i}$ and \( \xi\circ\xi_i \) being superadditive, both $s_{n}\circ s_{n_{1}}^{-1}$ and $s_{n}\circ s_{n_{2}}^{-1}$ are superadditive. Consequently $S_{u}\ge S_{i}$ for $i=1,2$ from Lemma \ref{lemm4.3}, i.e., $S_{u}\ge\max(S_{1},S_{2})$.
\end{proof}

\begin{remark}\label{remark:4.3} \emph{ In Theorem \ref{theorem:4.3}, the condition $s_{n_{1}}(0)=s_{_{n_{2}}}(0)$ cannot be dropped, in general. For example, define the functions $s_{n_{1}}:[0,1]\rightarrow[1,2]$ with $s_{n_{1}}(x)=2-x$ and $s_{n_{2}}:[0,1]\rightarrow[1,3]$ with $s_{n_{2}}(x)=3-2x$. Assume that there exists a function $f:[0,1]\rightarrow[1,c]$ such that both $f\circ s_{n_{1}}^{-1}:[1,2]\rightarrow[1,c]$ and $f\circ s_{n_{2}}^{-1}:[1,3]\rightarrow[1,c]$ are strictly increasing bijections. These result in $c=2$ and $c=3$, respectively, contradiction.}
\end{remark}

Let $S_{1}:[0,1]^2\rightarrow[0,1]$ and $S_{2}:[0,1]^2\rightarrow[0,1]$ be two non-constant proper and nilpotent t-subnorms with continuous, strictly decreasing additive generators. Then from the proof of Theorem \ref{theorem:4.3}, we can summarize an algorithm to find an upper bound of both $S_{1}$ and $S_{2}$ as follows.

\begin{algorithm}[H]
\caption{\textbf{Finding an upper bound for $S_{1}$ and $S_{2}$}}
\label{alg:4.2}
\noindent
\begin{tabular}{p{0.15\linewidth}p{0.75\linewidth}}
\toprule
\textbf{Input} & Two non-constant proper and nilpotent t-subnorms $S_{1}, S_{2}$ with their respective continuous, strictly decreasing additive generators $s_1$ and $s_2$, satisfying $s_{n_{1}}(0)=s_{_{n_{2}}}(0)$ \\
\midrule
\textbf{Output} & A non-constant proper and nilpotent t-subnorm $S_{u}$ with a continuous, strictly decreasing additive generator $s_n$ \\
\midrule
\textbf{Step 1:} & Compute the normalized additive generators $s_{n_{1}}$ and $s_{n_{2}}$ of $S_{1}$ and $S_{2}$ by Eq.\eqref{eq5}, respectively. \\
\textbf{Step 2:} & Compute $\xi_{i}:[1,c]\rightarrow [1,c]$ defined by $\xi_{i}=f\circ s_{n_{i}}^{-1}$, $i=1,2$, where $f:[0,1]\rightarrow[1,c]$ is given by $f(x)=(1-c)x+c$. \\
\textbf{Step 3:} & Compute $\psi_{i}=\xi_{i}^{-1}$, $i=1,2.$ \\
\textbf{Step 4:} & For all $x,y\in[1,c]$ with $x<y$, compute $D(x,y)$ defined by Eq.(\ref{eq:12}). \\
\textbf{Step 5:} & Compute $\xi:[1,c]\rightarrow[1,d]$ defined by Eq.(\ref{eq:13}). \\
\textbf{Step 6:} & Compute the normalized additive generators $s_{n}:[0,1]\rightarrow[1,d]$ by $s_{n}(x)=(\xi\circ f)(x)$. \\
\textbf{Step 7:} & Compute $S_{u}:[0,1]^2\rightarrow[0,1]$ by $S_{u}(x,y)=s_{n}^{(-1)}(s_{n}(x) + s_{n}(y))$. \\
\textbf{Step 8:} & End. \\
\bottomrule
\end{tabular}
\end{algorithm}

Note that Algorithm \ref{alg:4.2} has a very large computational complexity since Step 4.
\begin{example}\label{exp4.3}\emph{Let $S_{1}:[0,1]^2\rightarrow[0,1]$ with
$$S_{1}(x,y)=\mbox{max}\left(2-\sqrt[3]{(2-x)^{3}+(2-y)^{3}},0\right)$$
and $S_{2}:[0,1]^2\rightarrow[0,1]$ with
$$S_{2}(x,y)=\mbox{max}\left(\frac{x+y-\frac{1}{2^{\frac{7}{8}}-1}+(2^{\frac{7}{8}}-1)xy}{2},0\right)$$
generated by their respective continuous, strictly decreasing additive generators $s_{1}:[0,1]\rightarrow[\frac{1}{8},1]$ with $s_{1}(x)=(1-\frac{x}{2})^{3}$ and $s_{2}:[0,1]\rightarrow[\frac{1}{8},1]$ with $s_{2}(x)=1-\frac{\ln(1+(2^{\frac{7}{8}}-1)x)}{\ln2}$. Obviously, both $S_{1}$ and $S_{2}$ are proper nilpotent t-subnorms and they are incomparable (see Fig. \ref{fig4}). \\[5pt]
Step1: The normalized additive generators are $s_{n_{1}}:[0,1]\rightarrow[1, 8]$ with $s_{n_{1}}(x)=(2-x)^{3}$ and $s_{n_{2}}:[0,1]\rightarrow[1, 8]$ with $s_{n_{2}}(x)=8(1-\frac{\ln(1+(2^{\frac{7}{8}}-1)x)}{\ln2})$, respectively. Thus, $c=s_{n_{1}}(0)=s_{_{n_{2}}}(0)=8$. \\[5pt]
Step2: $f(x)=-7x+8$, $\xi_{1}(x)=f\circ s_{n_{1}}^{-1}(x)=7\sqrt[3]{x}-6$ and $\xi_{2}(x)=f\circ s_{n_{2}}^{-1}(x)=-7\cdot \frac{2^{1-\frac{x}{8}}-1}{2^{\frac{7}{8}}-1}+8$. \\[5pt]
Step3: $\psi_{1}(x)=\xi_{1}^{-1}(x)=(\frac{6+x}{7})^{3}$ and $\psi_{2}(x)=\xi_{2}^{-1}(x)=8(1-\frac{\ln(1+\frac{\delta-1}{7}(8-x))}{\ln2})$, where $\delta = 2^{\frac{7}{8}}$. \\[5pt]
Step4: Obviously, $\psi_{1}$ and $\psi_{2}$ are differentiable, $\frac{\psi_{1}'(x)}{\psi_{1}(x)}=\frac{3}{6+x}$ and
$$\frac{\psi_2'(x)}{\psi_2(x)} = \frac{\delta - 1}{7} \cdot \frac{1}{\left[1 + \frac{\delta - 1}{7}(8 - x)\right] \ln\left( \frac{2}{1+\frac{\delta - 1}{7}(8-x)} \right)}.$$
The transcendental equation $\frac{\psi_{1}'(x)}{\psi_{1}(x)}=\frac{\psi_{2}'(x)}{\psi_{2}(x)}$ has a unique solution, denoted as $x_{0}$ ($x_{0}\approx 2.943$).
\begin{itemize}
  \item When $x\in[1,x_{0})$, we have $\frac{\psi_{1}'(x)}{\psi_{1}(x)}<\frac{\psi_{2}'(x)}{\psi_{2}(x)}$, then $D(x,y)=\frac{\psi_{2}(x)}{\psi_{2}(y)}=\frac{\ln2-\ln(1+\frac{\delta-1}{7}(8-x))}{\ln2-\ln(1+\frac{\delta-1}{7}(8-y))}$ for $1\leq x<y\leq x_{0}$.
  \item When $x\in[x_{0},8]$, we have $\frac{\psi_{1}'(x)}{\psi_{1}(x)}>\frac{\psi_{2}'(x)}{\psi_{2}(x)}$, then $D(x,y)=\frac{\psi_{1}(x)}{\psi_{1}(y)}=(\frac{6+x}{6+y})^{3}$ for $x_{0}\leq x<y\leq 8$.
\end{itemize}
Step5: Further, we have
\begin{itemize}
  \item $\xi(x)=\frac{1}{D(1,x)}=8(1-\frac{\ln(1+\frac{\delta-1}{7}(8-x))}{\ln2})$ whenever $x\in[1,x_{0})$.
  \item $\xi(x)=\frac{1}{D(1,x)}=\frac{1}{D(1,x_{0})\cdot D(x_{0},x)}=8(1-\frac{\ln(1+\frac{\delta-1}{7}(8-x_{0}))}{\ln2})\cdot (\frac{6+x}{6+x_{0}})^{3}$ since $D(1,x_{0})=\frac{\ln2}{8(\ln2-\ln(1+\frac{\delta-1}{7}(8-x_{0})))}$ and $D(x_{0},x)=(\frac{6+x_{0}}{6+x})^{3}$ whenever $x\in[x_{0},8]$.
\end{itemize}
Step6: By $s_{n}(x)=\xi\circ f(x)$ and $f(x)=-7x+8$, we get
\begin{equation*}
 s_{n}(x)=\begin{cases}
8(1-\frac{\ln(1+\frac{\delta-1}{7}(8-x_{0}))}{\ln2})\cdot (\frac{14-7x}{6+x_{0}})^{3} & \hbox{if }\ x\in[0,\frac{8-x_{0}}{7}],\\[5pt]
8(1-\frac{\ln(1+(\delta-1)x)}{\ln2}) & \hbox{if }\ x\in[\frac{8-x_{0}}{7},1].
\end{cases}
\end{equation*} It is evident that $s_{n}$ is a strictly decreasing bijection from $[0,1]$ to $[1,d]$, where $1<d=\frac{1}{D(1,8)}<\infty$ (see Fig. \ref{fig5}). \\[5pt]
Step7: Using Eq.(\ref{eq:1}) and substituting $s_{n}$, we can construct a proper and nilpotent t-subnorm $S_{u}$. By Theorem \ref{theorem:4.3}, $S_{u}\geq \max(S_{1}, S_{2})$. \\[5pt]
Step8: End.}
\end{example}

\begin{figure}[H]
\centering
\includegraphics[width=0.8\textwidth, keepaspectratio]{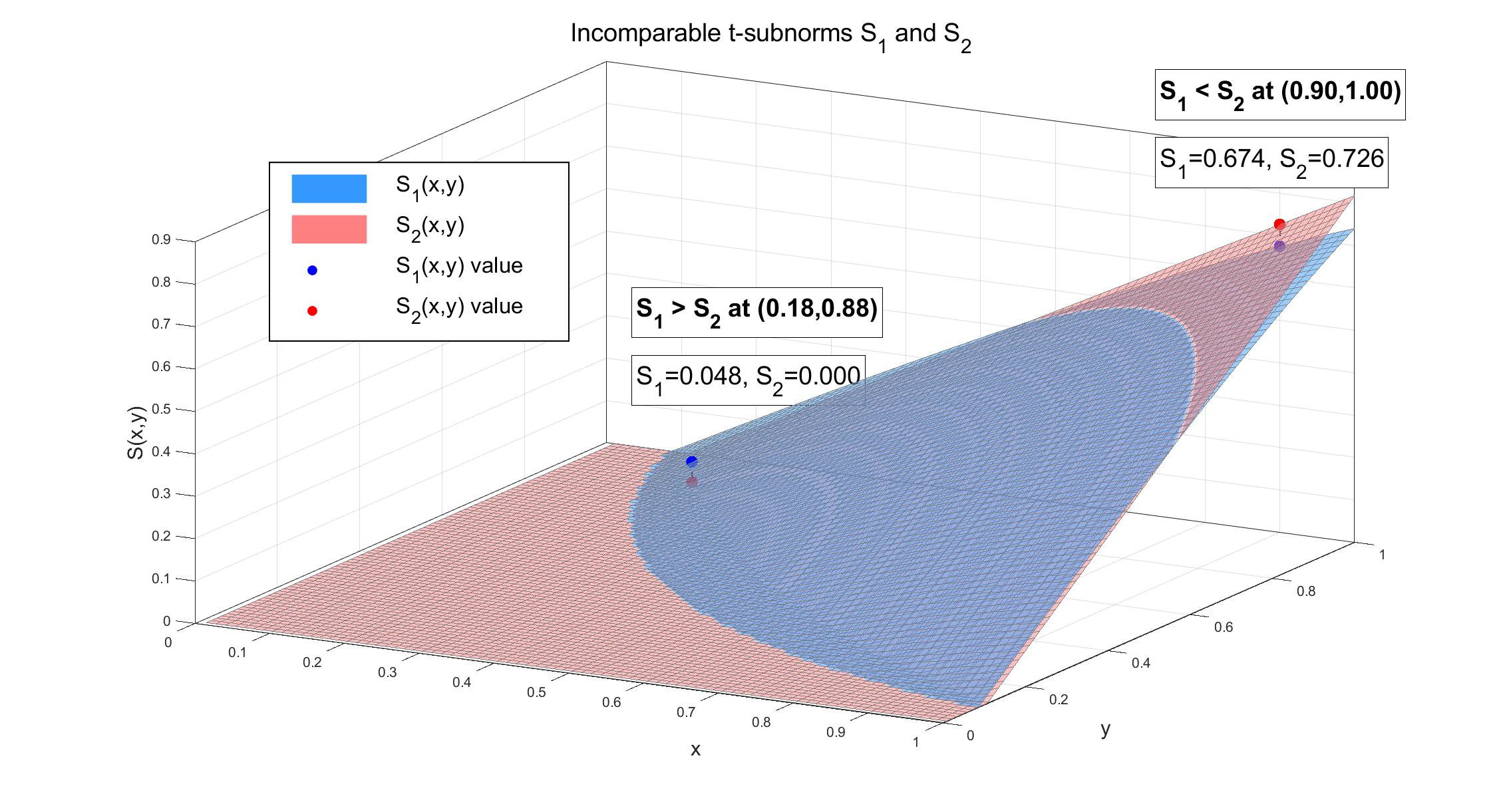}
\caption{3D plot of two proper and nilpotent t-subnorms in Example \ref{exp4.3}.}
\label{fig4}
\end{figure}

\begin{figure}[H]
\centering
\includegraphics[width=0.8\textwidth, keepaspectratio]{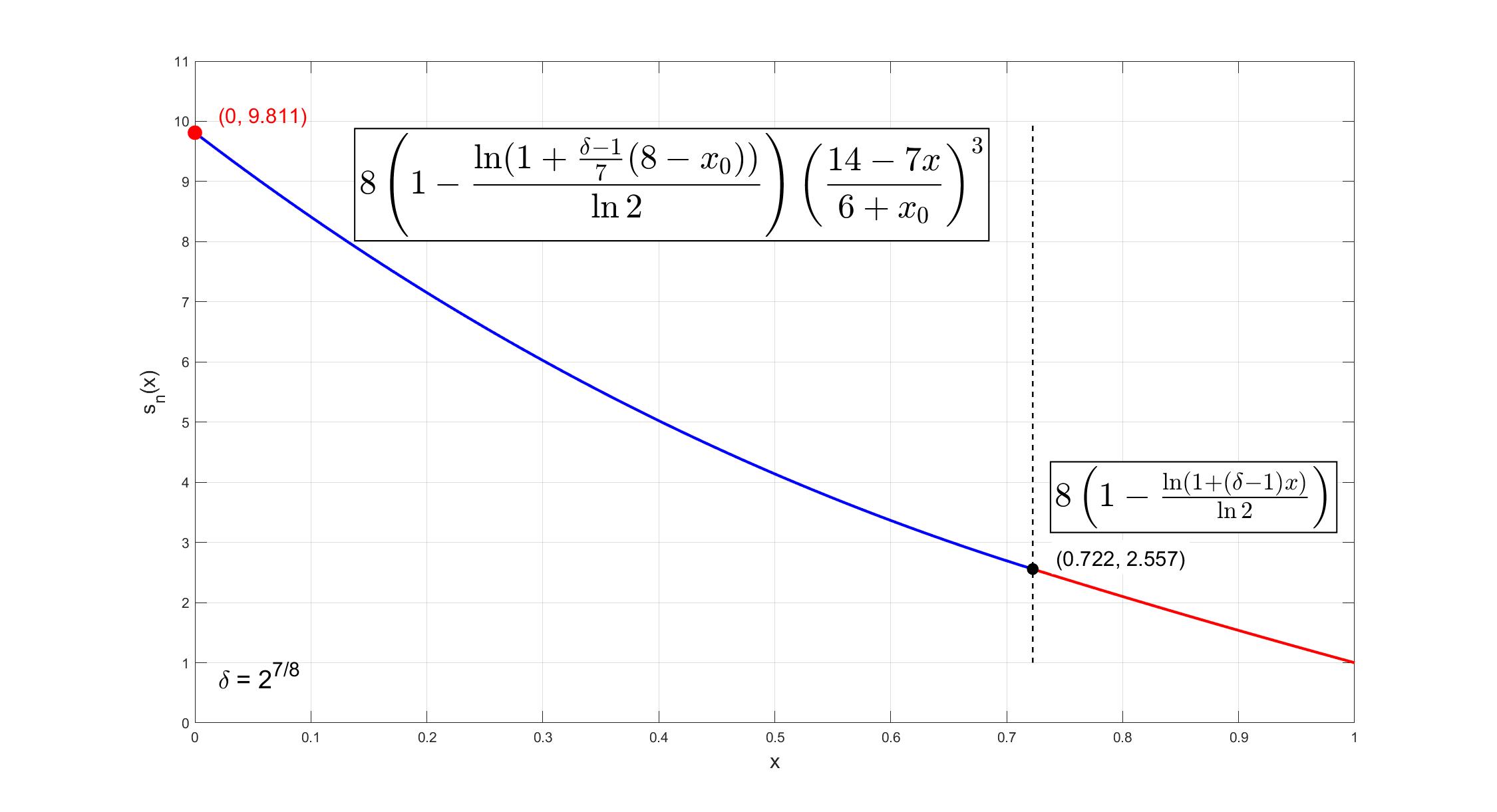}
\caption{Additive generator $s_{n}$ of $S_{u}$ in Example \ref{exp4.3}.}
\label{fig5}
\end{figure}

\section{Lower bounds of t-subnorms with continuous, strictly decreasing additive generators}
This section focuses on the existence of a lower bound of both t-subnorms $S_{1}$ and $S_{2}$  with continuous, strictly decreasing additive generators, and an algorithm is given for calculating it.

The following lemma is a basis of the coming discussion.
\begin{lemma}\label{lemm5.1}
Let $f:(0,\infty)\rightarrow(0,\infty)$. If the function $\frac{f(x)}{x}$ is non-increasing on $(0,\infty)$, then $f$ is subadditive.
\end{lemma}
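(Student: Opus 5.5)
The plan is a direct elementary argument from the monotonicity of $\frac{f(x)}{x}$. Fix arbitrary $x,y\in(0,\infty)$. Since $x+y>x$ and $x+y>y$ and $\frac{f(t)}{t}$ is non-increasing on $(0,\infty)$, we get
\begin{equation*}
\frac{f(x+y)}{x+y}\leq \frac{f(x)}{x}\quad\mbox{and}\quad \frac{f(x+y)}{x+y}\leq \frac{f(y)}{y}.
\end{equation*}
Multiplying the first inequality by $x>0$ and the second by $y>0$ preserves the inequalities, which yields
\begin{equation*}
\frac{x}{x+y}f(x+y)\leq f(x)\quad\mbox{and}\quad \frac{y}{x+y}f(x+y)\leq f(y).
\end{equation*}

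Adding these two inequalities, the left-hand sides combine to $\frac{x+y}{x+y}f(x+y)=f(x+y)$, so $f(x+y)\leq f(x)+f(y)$ for all $x,y\in(0,\infty)$. This is exactly subadditivity of $f$ on $(0,\infty)$; note that $x+y$ automatically lies in the domain, so no extra range condition is needed, unlike in Theorem \ref{theorem:3.1}.

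This is the same weighting trick used in step (IV) of the proof of Theorem \ref{theorem:4.1}, where $m(m_{1}(x))\leq \frac{x}{x+y}m(m_{1}(x+y))$ gave superadditivity. There is no real obstacle here. The only points to check are that $x,y$ are strictly positive, so that division and multiplication by them are legitimate and order-preserving, and that the positivity of the values of $f$ is not even needed for the argument.
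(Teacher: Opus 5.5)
Your proof is correct and is essentially identical to the paper's. Both compare $\frac{f(x+y)}{x+y}$ with $\frac{f(x)}{x}$ and $\frac{f(y)}{y}$, multiply through by $x$ and $y$, and add the results; your side remark that positivity of the values of $f$ is not needed is also accurate.
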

\begin{proof}
Since $\frac{f(x)}{x}$ is non-increasing on $(0,\infty)$, for any $x,y\in(0,\infty)$ we have
$$\frac{f(x)}{x}\geq\frac{f(x+y)}{x+y} \quad\mbox{and}\quad \frac{f(y)}{y}\geq\frac{f(x+y)}{x+y},$$
or, equivalently,
$$f(x)\geq\frac{x}{x+y}f(x+y) \quad\mbox{and}\quad f(y)\geq\frac{y}{x+y}f(x+y).$$
Therefore, $f(x)+f(y)\geq f(x+y)$, i.e., $f$ is subadditive.
\end{proof}

\subsection{In the case of proper and strict t-subnorms}
%In the case that both $S_{1}$ and $S_{2}$ are proper and strict with continuous, strictly decreasing additive generators, we completely show the existence of a lower bound of both $S_{1}$ and $S_{2}$. In addition, we present an algorithm for finding a lower bound for $S_{1}$ and $S_{2}$.

\begin{theorem}\label{theorem:5.1}
Let $S_{1}:[0,1]^2\rightarrow[0,1]$ and $S_{2}:[0,1]^2\rightarrow[0,1]$ be two non-constant proper and strict t-subnorms with continuous, strictly decreasing additive generators. Then there exists a non-constant proper and strict t-subnorm $S_{l}:[0,1]^2\rightarrow[0,1]$ generated by a continuous, strictly decreasing additive generator such that
$$S_{l}\leq \min(S_{1}, S_{2}).$$
\end{theorem}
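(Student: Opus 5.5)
By Theorem \ref{theorem:3.1} with normalized generators (Remark \ref{remark:4.1}(ii)), $S_{l}\le S_{i}$ holds if and only if $s_{l}\circ s_{n_{i}}^{-1}:[1,\infty]\rightarrow[1,\infty]$ is subadditive, where $s_{l}$ is the normalized generator of $S_{l}$. So, dually to Theorem \ref{theorem:4.1}, I will construct one continuous strictly decreasing bijection $s:[0,1]\rightarrow[1,\infty]$ such that both $g_{i}=s\circ s_{n_{i}}^{-1}$, $i=1,2$, are subadditive. Then $S_{l}$ generated by $s$ is proper because $s(1)=1>0$, strict because $s(0)=\infty$ (Proposition \ref{pro2.1}), and non-constant because $s(0)=\infty>2=2s(1)$. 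Subadditivity will come from Lemma \ref{lemm5.1}: it suffices that $g_{i}(u)/u$ is non-increasing in $u$. Substituting $u=s_{n_{i}}(x)$, and using that $s_{n_{i}}$ is decreasing, this is equivalent to
\[
x\mapsto \frac{s(x)}{s_{n_{i}}(x)} \text{ being non-decreasing on } (0,1],\qquad i=1,2,
\]
that is, $\frac{s(x)}{s(y)}\ge \frac{s_{n_{i}}(x)}{s_{n_{i}}(y)}$ whenever $x<y$.

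Passing to logarithms, put $a_{i}=\ln s_{n_{i}}$ and $L=\ln s$. All three are decreasing, with $a_{i}(1)=0$ and $a_{i}(0)=\infty$. The condition says that on every interval the drop of $L$ is at most the smaller of the drops of $a_{1}$ and $a_{2}$. I will take $L$ to be the largest such function, exactly as the function $D$ was built in Theorem \ref{theorem:4.1}. For $0\le x<y\le 1$ define
\[
D(x,y)=\inf\Big\{\prod_{k=1}^{n}\min\Big(\frac{s_{n_{1}}(x_{k})}{s_{n_{1}}(x_{k-1})},\frac{s_{n_{2}}(x_{k})}{s_{n_{2}}(x_{k-1})}\Big)\ \Big|\ x=x_{0}<x_{1}<\cdots<x_{n}=y\Big\},
\]
and set $s(x)=1/D(x,1)$. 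Steps (I)--(III) of the proof of Theorem \ref{theorem:4.1} carry over verbatim:
\begin{itemize}
\item[(a)] the two-sided bound $\prod_{i}\frac{s_{n_{i}}(y)}{s_{n_{i}}(x)}\le D(x,y)\le \min_{i}\frac{s_{n_{i}}(y)}{s_{n_{i}}(x)}$;
\item[(b)] multiplicativity $D(x,y)=D(x,z)D(z,y)$ for $x<z<y$;
\item[(c)] hence $s$ is strictly decreasing and continuous, with $s(1)=1$.
\end{itemize}
The upper bound in (a), together with $s(x)=s(y)/D(x,y)$, gives $\frac{s(x)}{s(y)}\ge \frac{s_{n_{i}}(x)}{s_{n_{i}}(y)}$, which is the monotonicity needed. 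The analogue of Step (IV) then reads $g_{i}(u)\ge \frac{u}{u+v}\,g_{i}(u+v)$, and adding the two such inequalities yields subadditivity.

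\textbf{Main obstacle: proving $s(0)=\infty$, i.e.\ $D(0,1)=0$.} This is needed for strictness. The lower bound in (a) does not force it, since the drop of $\ln s$ over $[0,1]$ is the integral of the smaller of the two rates at which $\ln s_{n_{1}}$ and $\ln s_{n_{2}}$ fall, and this might stay finite even though each rate integrates to infinity. My first attempt is a perturbation that keeps both properties. If $D(0,1)=\delta>0$, choose a continuous strictly decreasing $\varphi:[0,1]\rightarrow[1,\infty]$ such that $\varphi(0)=\infty$, $\varphi(1)=1$, and $\varphi\ge$ both the drop of $\ln s$ and the needed slack; then replace $s$ by
\[
\tilde s=\max\big(s,\ \theta\circ\min(s_{n_{1}},s_{n_{2}})\big),
\]
where $\theta$ is a concave increasing bijection of $[1,\infty]$ with $\theta(u)/u$ non-increasing, for instance $\theta(u)=1+\ln u$. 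Each piece makes $g_{i}(u)/u$ non-increasing, and I would verify that taking the maximum preserves this, because it is a property of ratios along a common monotone reparametrisation. If that check fails, I would fall back on restricting the chains in $D$ to the region near $0$ where one $s_{n_{i}}$ dominates, in the spirit of Remark \ref{remak4.3}, and handle the two regimes separately.
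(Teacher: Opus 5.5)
Your construction is oriented the wrong way. For $x<y$, ``$s/s_{n_i}$ non-decreasing'' means $\frac{s(x)}{s(y)}\le\frac{s_{n_i}(x)}{s_{n_i}(y)}$. Your ``that is'' reverses this inequality. Your verbal form, that $\ln s$ drops by at most the smaller of the two drops, is the correct one.

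The $D$ you copy from Theorem~\ref{theorem:4.1} is an infimum of products of minima of ratios. Through the upper bound in (a) it gives $\frac{s(x)}{s(y)}=\frac{1}{D(x,y)}\ge\max_i\frac{s_{n_i}(x)}{s_{n_i}(y)}$. So $\ln s$ drops at least as fast as both $\ln s_{n_i}$, each $g_i(u)/u$ is non-decreasing, and each $g_i$ is superadditive. By Lemma~\ref{lemm4.2} this yields $S_l\ge\max(S_1,S_2)$. In fact your $s$ coincides with the upper-bound generator $s_n=m\circ f$ of Theorem~\ref{theorem:4.1}; substitute $x_j\mapsto 1/x_j$ in the chains. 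This is also why $s(0)=\infty$ is automatic for your $D$, since $D(0,1)\le 1/s_{n_i}(0)=0$. And it is why the inequality $g_i(u)\ge\frac{u}{u+v}g_i(u+v)$ you assert in Step (IV) is the reverse of what actually follows.

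Reversing the construction (a supremum of products of maxima, i.e.\ the minimum of the Stieltjes measures of $\ln s_{n_1}$ and $\ln s_{n_2}$) does not help, and your ``main obstacle'' is fatal. Requiring \emph{both} $g_i(u)/u$ to be non-increasing forces $\ln s(x)-\ln s(y)\le\int_x^y\min(\varphi_1,\varphi_2)\,dt$, where $\varphi_i=-(\ln s_{n_i})'$. Choose continuous $\varphi_1,\varphi_2>0$ with $\min(\varphi_1,\varphi_2)\le 1$ everywhere, but with $\int\varphi_1\ge 1$ over $(2^{-k-1},2^{-k}]$ for even $k$ and $\int\varphi_2\ge 1$ over it for odd $k$. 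Then $s_{n_i}(x)=\exp\int_x^1\varphi_i(t)\,dt$ are legitimate normalized generators of non-constant proper strict t-subnorms. Yet every $s$ meeting your two ratio conditions has $s(0)\le e\,s(1)<\infty$.

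This rules out the perturbation: $\tilde s(0)=\infty$, so $\tilde s$ cannot satisfy both ratio conditions, whatever happens when you check the maximum. The fallback fails too, because dominance alternates infinitely often near $0$. (When the two measures are mutually singular, only constant $s$ survive.)

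The missing idea is the paper's asymmetric treatment of the two generators. Star-shapedness (Lemma~\ref{lemm5.1}) is imposed only on $g_1=s\circ s_{n_1}^{-1}$. The other composition, $g_2=s\circ s_{n_2}^{-1}$, is made subadditive by slow growth: in \eqref{eq:14} it increases by at most $\frac{1}{2g_2(2^{k-1})}\le\frac12$ per doubling. Together with $g_2\ge g_2(1)=1$, for $1\le x\le y$ and $2^{k-1}\le y<2^k$ this gives $g_2(x+y)\le g_2(2^{k+1})\le g_2(2^{k-1})+1\le g_2(y)+g_2(x)$. Taking the minimum with the ratio branch, plus affine interpolation in $\theta=s_{n_1}\circ s_{n_2}^{-1}$, keeps $g_1(u)/u$ non-increasing. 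Since $\ln s$ is then constrained only by $\ln s_{n_1}$, the recursion can still diverge: whichever branch is eventually chosen forces $g_2\to\infty$ (Statement C).
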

\begin{proof}Let $s_{n_{1}}, s_{n_{2}}:[0,1]\rightarrow[1,\infty]$ be the normalized additive generators of $S_{1}$ and $S_{2}$, respectively. From Theorem \ref{theorem:3.1}, it suffices to show that there exists a strictly decreasing bijection $s:[0,1]\rightarrow[1,\infty]$ such that both $s\circ s_{n_{1}}^{-1}$ and $s\circ s_{n_{2}}^{-1}$ are subadditive, which will be completed by splitting in three steps. \\
\setcounter{step}{1} (\Roman{step}) Firstly, we define function $s\circ s_{n_{2}}^{-1}:[1,\infty]\rightarrow[1,\infty]$ as follows.
\begin{itemize}
  \item If $x=2^{k}$, $k=0,1,2,3,\ldots$, define
  \begin{equation}\label{eq:14}
 s\circ s_{n_{2}}^{-1}(2^{k})=\begin{cases}
1 & \ k=0,\\
\mbox{min}\left(s\circ s_{n_{2}}^{-1}(2^{k-1})\cdot \frac{s_{n_{1}}\circ s_{n_{2}}^{-1}(2^{k})}{s_{n_{1}}\circ s_{n_{2}}^{-1}(2^{k-1})}, s\circ s_{n_{2}}^{-1}(2^{k-1})+\frac{1}{2\cdot s\circ s_{n_{2}}^{-1}(2^{k-1})}\right) & \mbox{otherwise}.
\end{cases}
\end{equation}

  \item If $x\in[2^{k},2^{k+1}]$, $k=0,1,2,3,\ldots$, define
  \begin{equation}\label{eq:15}
  s\circ s_{n_{2}}^{-1}(x)=A\cdot \left(s_{n_{1}}\circ s_{n_{2}}^{-1}\right)(x)+B,
  \end{equation}
where both $A=\frac{s\circ s_{n_{2}}^{-1}(2^{k+1})-s\circ s_{n_{2}}^{-1}(2^{k})}{s_{n_{1}}\circ s_{n_{2}}^{-1}(2^{k+1})-s_{n_{1}}\circ s_{n_{2}}^{-1}(2^{k})}$ and $B=s\circ s_{n_{2}}^{-1}(2^{k})-A\cdot \left(s_{n_{1}}\circ s_{n_{2}}^{-1}\right)(2^{k})$ are constant because of the well-defined $s\circ s_{n_{2}}^{-1}(2^{k})$, $k=0,1,2,3,\ldots$, as well as the known composite function $s_{n_{1}}\circ s_{n_{2}}^{-1}$.
\end{itemize}

Then we have the following three statements:\\
{\bf A}. The function $s\circ s_{n_{1}}^{-1}$ is linear on $[s_{n_{1}}\circ s_{n_{2}}^{-1}(2^{k}),s_{n_{1}}\circ s_{n_{2}}^{-1}(2^{k+1})]$ for $k=0,1,2,3,\ldots$.

Indeed, let $y=s_{n_{1}}\circ s_{n_{2}}^{-1}(x), x\in[2^{k},2^{k+1}]$. Then $x=s_{n_{2}}\circ s_{n_{1}}^{-1}(y), y\in [s_{n_{1}}\circ s_{n_{2}}^{-1}(2^{k}),s_{n_{1}}\circ s_{n_{2}}^{-1}(2^{k+1})]$. From Eq.(\ref{eq:15}) we get
$$s\circ s_{n_{1}}^{-1}(y)=\left(s\circ s_{n_{2}}^{-1}\right)\circ \left(s_{n_{2}}\circ s_{n_{1}}^{-1}\right)(y)=A\cdot \left(s_{n_{1}}\circ s_{n_{2}}^{-1}\right)\circ \left(s_{n_{2}}\circ s_{n_{1}}^{-1}\right)(y)=Ay+B,$$
showing that $s\circ s_{n_{1}}^{-1}$ is linear. \\
 {\bf B}. $s\circ s_{n_{2}}^{-1}$ is strictly increasing and continuous on $[1,\infty)$.

 It is clear that $A>0$ since $s\circ s_{n_{2}}^{-1}$ and $s_{n_{1}}\circ s_{n_{2}}^{-1}$ are monotonically increasing, which together with the continuity of $s_{n_{1}}\circ s_{n_{2}}^{-1}$ and Eq.(\ref{eq:15}) deduces that $s\circ s_{n_{2}}^{-1}$ is strictly increasing and continuous on $[1,\infty)$.\\
{\bf C}. $s\circ s_{n_{2}}^{-1}(\infty)=\infty$.

In fact, assuming that $\displaystyle\lim_{x\rightarrow\infty}s\circ s_{n_{2}}^{-1}(x)=L<\infty$ with $L>1$, we have
  \begin{equation}\label{eq:16}
  \lim_{k\rightarrow\infty}\left(s\circ s_{n_{2}}^{-1}(2^{k+1})-s\circ s_{n_{2}}^{-1}(2^{k})\right)=0.
  \end{equation}
From Eq.(\ref{eq:14}), if there are subsequences $(k_i)_{i=1,\cdots, n,\cdots}$ of $\{0,1,\cdots, n,\cdots\}$ such that
$$s\circ s_{n_{2}}^{-1}(2^{k_i})= s\circ s_{n_{2}}^{-1}(2^{k_i-1})+\frac{1}{2\cdot s\circ s_{n_{2}}^{-1}\left(2^{k_i-1}\right)},$$
i.e.,
$$s\circ s_{n_{2}}^{-1}(2^{k_i})-s\circ s_{n_{2}}^{-1}(2^{k_i-1})= \frac{1}{2\cdot s\circ s_{n_{2}}^{-1}\left(2^{k_i-1}\right)},$$
 then $\displaystyle\lim_{k_i\rightarrow\infty}\frac{1}{2\cdot s\circ s_{n_{2}}^{-1}\left(2^{k_i-1}\right)}=\frac{1}{2L}>0$, contrary to Eq.(\ref{eq:16}). Therefore, there exists $k_{0}\in \mathbb{N}$ such that for all $k>k_{0}$,
$$s\circ s_{n_{2}}^{-1}(2^{k+1})=s\circ s_{n_{2}}^{-1}(2^{k})\cdot \frac{s_{n_{1}}\circ s_{n_{2}}^{-1}(2^{k+1})}{s_{n_{1}}\circ s_{n_{2}}^{-1}(2^{k})},$$
i.e., $$s\circ s_{n_{2}}^{-1}(2^{k+1})=s_{n_{1}}\circ s_{n_{2}}^{-1}(2^{k+1})\cdot \frac{s\circ s_{n_{2}}^{-1}(2^{k})}{s_{n_{1}}\circ s_{n_{2}}^{-1}(2^{k})}.$$
Then, by induction, for any $n\in \mathbb{N}$,
$$s\circ s_{n_{2}}^{-1}(2^{k+n})=\left(s_{n_{1}}\circ s_{n_{2}}^{-1}\right)(2^{k+n})\cdot \frac{s\circ s_{n_{2}}^{-1}(2^{k})}{s_{n_{1}}\circ s_{n_{2}}^{-1}(2^{k})}.$$
Recall that $s_{n_{1}}\circ s_{n_{2}}^{-1}:[1,\infty]\rightarrow[1,\infty]$ is a strictly increasing bijection. Thus $\displaystyle\lim_{n\rightarrow\infty}s_{n_{1}}\circ s_{n_{2}}^{-1}(2^{k+n})=\infty$. So that $\displaystyle\lim_{n\rightarrow\infty}s\circ s_{n_{2}}^{-1}(2^{k+n})=\infty$, contradicting the assumption $\displaystyle\lim_{x\rightarrow\infty}s\circ s_{n_{2}}^{-1}(x)=L<\infty$.

Statements B and C imply that $s\circ s_{n_{2}}^{-1}$ is a strictly increasing bijection from $[1,\infty]$ to $[1,\infty]$, which is equivalent to $s:[0,1]\rightarrow[1,\infty]$ being a strictly decreasing bijection.\\
\stepcounter{step} (\Roman{step}) We claim that $s\circ s_{n_{2}}^{-1}$ is subadditive.

Indeed, for any $x,y\in[1,\infty]$, without loss of generality, assume $1\leq x\leq y<\infty$ and let $2^{k-1}\leq y< 2^{k}$ with $k\in\mathbb{N}$. Since $s\circ s_{n_{2}}^{-1}$ is strictly increasing, we have
\begin{align*}
s\circ s_{n_{2}}^{-1}(x+y)&\leq s\circ s_{n_{2}}^{-1}(y+y)\leq s\circ s_{n_{2}}^{-1}(2^{k}+2^{k})=s\circ s_{n_{2}}^{-1}(2^{k+1})\\
&\leq s\circ s_{n_{2}}^{-1}(2^{k})+\frac{1}{2} \mbox{ by Eq. }(\ref{eq:14})\\
&\leq s\circ s_{n_{2}}^{-1}(2^{k-1})+\frac{1}{2}+\frac{1}{2}\\
&=s\circ s_{n_{2}}^{-1}(2^{k-1})+1\\
&=s\circ s_{n_{2}}^{-1}(2^{k-1})+s\circ s_{n_{2}}^{-1}(1)\mbox{ since }s\circ s_{n_{2}}^{-1}(1)=1\\
&\leq s\circ s_{n_{2}}^{-1}(x)+s\circ s_{n_{2}}^{-1}(y).\\
\end{align*}
If $y=\infty$, then it is a trivial that $s\circ s_{n_{2}}^{-1}(x+y)=s\circ s_{n_{2}}^{-1}(x)+s\circ s_{n_{2}}^{-1}(y)$.

Therefore, $s\circ s_{n_{2}}^{-1}$ is subadditive. \\
\stepcounter{step} (\Roman{step}) Finally, we shall show that $s\circ s_{n_{1}}^{-1}$ is also subadditive.

By Eq.(\ref{eq:14}) we have
$$s\circ s_{n_{2}}^{-1}(2^{k})=\left(s\circ s_{n_{1}}^{-1}\right)\circ \left(s_{n_{1}}\circ s_{n_{2}}^{-1}\right)(2^{k})\leq s\circ s_{n_{2}}^{-1}(2^{k-1})\cdot \frac{s_{n_{1}}\circ s_{n_{2}}^{-1}(2^{k})}{s_{n_{1}}\circ s_{n_{2}}^{-1}(2^{k-1})},$$
or, equivalently,
$$\frac{\left(s\circ s_{n_{1}}^{-1}\right)\circ \left(s_{n_{1}}\circ s_{n_{2}}^{-1}\right)(2^{k})}{s_{n_{1}}\circ s_{n_{2}}^{-1}(2^{k})}\leq \frac{s\circ s_{n_{2}}^{-1}(2^{k-1})}{s_{n_{1}}\circ s_{n_{2}}^{-1}(2^{k-1})}=\frac{\left(s\circ s_{n_{1}}^{-1}\right)\circ \left(s_{n_{1}}\circ s_{n_{2}}^{-1}\right)(2^{k-1})}{s_{n_{1}}\circ s_{n_{2}}^{-1}(2^{k-1})}.$$
Let $a=\left(s_{n_{1}}\circ s_{n_{2}}^{-1}\right)(2^{k})$ and $b=\left(s_{n_{1}}\circ s_{n_{2}}^{-1}\right)(2^{k-1})$. Since $s_{n_{1}}\circ s_{n_{2}}^{-1}$ is strictly increasing, we have $a>b$ and
$$\frac{s\circ s_{n_{1}}^{-1}(a)}{a}\leq \frac{s\circ s_{n_{1}}^{-1}(b)}{b},$$
which together with Statement A implies that $\frac{s\circ s_{n_{1}}^{-1}(x)}{x}$ is non-increasing on $[1,\infty]$. Therefore, $s\circ s_{n_{1}}^{-1}$ is subadditive by Lemma \ref{lemm5.1}. \\
\end{proof}

By induction, the following corollary immediately follows from Theorem \ref{theorem:5.1}.
\begin{corollary}\label{coro5.1}
Let $S_{i}:[0,1]^2\rightarrow[0,1]$, $i=1,2,\ldots,n$, be a finite family of non-constant proper and strict t-subnorms with continuous, strictly decreasing additive generators. Then there exists a non-constant proper and strict t-subnorm $S_{l}:[0,1]^2\rightarrow[0,1]$ generated by a continuous, strictly decreasing additive generator such that
$$S_{l}\leq \min(S_{1}, S_{2},\ldots,S_{n}).$$
\end{corollary}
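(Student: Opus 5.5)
The plan is induction on $n$, using Theorem \ref{theorem:5.1} as the inductive step. The only subtle point is that the lower bound produced at each stage must again satisfy the hypotheses of Theorem \ref{theorem:5.1}.

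\emph{Base cases.} For $n=1$ take $S_{l}=S_{1}$. For $n=2$ the statement is exactly Theorem \ref{theorem:5.1}.

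\emph{Inductive step.} Suppose $n\geq 3$ and that the claim holds for $n-1$. The inductive hypothesis gives a t-subnorm $S'$ with
$$S'\leq \min(S_{1},\ldots,S_{n-1}),$$
where $S'$ is non-constant, proper and strict, and is generated by a continuous, strictly decreasing additive generator. This means $S'$ and $S_{n}$ satisfy the hypotheses of Theorem \ref{theorem:5.1} verbatim. That theorem then yields a non-constant, proper and strict t-subnorm $S_{l}$, again with a continuous, strictly decreasing additive generator, such that $S_{l}\leq \min(S',S_{n})$.

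\emph{Conclusion.} The pointwise order on functions $[0,1]^2\rightarrow[0,1]$ is transitive, and $\min(\min(S_{1},\ldots,S_{n-1}),S_{n})=\min(S_{1},\ldots,S_{n})$. Therefore
$$S_{l}\leq \min(S',S_{n})\leq \min(S_{1},\ldots,S_{n}),$$
which completes the induction.

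\emph{The only real check.} It must be confirmed that each intermediate bound stays in the class required by Theorem \ref{theorem:5.1}. Nothing extra is needed here, because that theorem's conclusion explicitly asserts that its output is non-constant, proper and strict with a continuous, strictly decreasing additive generator. So the closure property comes for free, and I do not expect any genuine obstacle.
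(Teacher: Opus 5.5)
Your proposal is correct and is the same argument as the paper, which simply says the corollary follows from Theorem~\ref{theorem:5.1} by induction. You take that theorem as the inductive step and note that its output is again a non-constant, proper and strict t-subnorm with a continuous, strictly decreasing additive generator, which is exactly the closure property the induction needs.
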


By duality, we have a completely analogous result as the following corollary.
\begin{corollary}\label{coro5.2}
Let $M_{i}:[0,1]^2\rightarrow[0,1]$, $i=1,2,\ldots,n$, be a finite family of non-constant proper and strict t-superconorms with continuous, strictly increasing additive generators. Then there exists a non-constant proper and strict t-superconorm $M_{u}:[0,1]^2\rightarrow[0,1]$ generated by a continuous, strictly increasing additive generator such that
$$M_{u}\geq \max(M_{1}, M_{2},\ldots,M_{n}).$$
\end{corollary}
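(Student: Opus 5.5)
This corollary follows from Corollary \ref{coro5.1} by the standard duality between t-subnorms and t-superconorms. No new construction is needed: we transport the lower bound across the duality.

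First we fix the duality precisely. For a binary operation $M$ on $[0,1]$, define $S(x,y)=1-M(1-x,1-y)$. Commutativity, associativity and monotonicity transfer directly. The condition $M(x,y)\geq\max(x,y)$ becomes $S(x,y)\leq\min(x,y)$. Hence $M$ is a t-superconorm if and only if $S$ is a t-subnorm, and pointwise order is reversed: $M\geq M'$ if and only if $S\leq S'$.

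On generators, let $m:[0,1]\to[0,\infty]$ be a continuous, strictly increasing additive generator of $M$, with $M(x,y)=m^{(-1)}(m(x)+m(y))$, where $m^{(-1)}$ is the pseudo-inverse for non-decreasing functions. Put $s(x)=m(1-x)$. Then $s$ is continuous and strictly decreasing, and a direct check on the pseudo-inverses gives $s^{(-1)}(u)=1-m^{(-1)}(u)$. Therefore $S(x,y)=s^{(-1)}(s(x)+s(y))$. The correspondence $m\leftrightarrow s$ is a bijection between the two classes of generators.

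The qualifiers also match under this correspondence:
\begin{itemize}
\item \emph{Non-constant.} $M$ differs from the constant $1$ operation if and only if $S\neq Z$.
\item \emph{Proper.} $M(0,0)>0$ if and only if $S(1,1)<1$, which by the criterion of \cite{AM2004} and Proposition \ref{pro2.1} (ii) means $s(1)=m(0)>0$.
\item \emph{Strict.} The defining strict-monotonicity conditions correspond, or, at the level of generators, $m(1)=\infty$ if and only if $s(0)=\infty$, using Proposition \ref{pro2.1} (iii).
\end{itemize}

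Now let $M_1,\dots,M_n$ be as in the corollary, and let $S_i$ be their duals. By the previous two paragraphs, the $S_i$ form a finite family of non-constant proper strict t-subnorms with continuous, strictly decreasing generators. Corollary \ref{coro5.1} gives such an $S_l$ with generator $s_l$ and $S_l\leq\min_i S_i$.

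Define $M_u(x,y)=1-S_l(1-x,1-y)$, generated by $m_u(x)=s_l(1-x)$. It lies in the required class by the correspondence above. Since duality reverses order, $S_l\leq S_i$ for every $i$ gives $M_u\geq M_i$ for every $i$, that is, $M_u\geq\max(M_1,\dots,M_n)$.

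The only point needing care is the pseudo-inverse identity $s^{(-1)}(u)=1-m^{(-1)}(u)$. For continuous strictly monotone generators it follows because both sides equal $1-m^{-1}(\min(u,m(1)))$, that is, the inverse of the generator truncated at its range endpoint, which is the same form used in the proof of Theorem \ref{theorem:3.1}.
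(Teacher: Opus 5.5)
Your proof is correct and follows the paper's route. The paper derives this corollary from Corollary \ref{coro5.1} with the single phrase ``by duality'', and you have written that duality out: the dual operation $1-M(1-x,1-y)$, the generator correspondence $s(x)=m(1-x)$ with $s^{(-1)}=1-m^{(-1)}$, the transfer of non-constancy, properness and strictness, and the reversal of order. One small precision: the expression $1-m^{-1}(\min(u,m(1)))$ only makes sense for $u\geq m(0)$. This is harmless, since every argument actually used has the form $u=m(x)+m(y)\geq m(0)$.
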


Let $S_{1}:[0,1]^2\rightarrow[0,1]$ and $S_{2}:[0,1]^2\rightarrow[0,1]$ be two non-constant proper and strict t-subnorms with continuous, strictly decreasing additive generators. Then from the proof of Theorem \ref{theorem:5.1}, we can summarize the following algorithm for finding a lower bound of both $S_{1}$ and $S_{2}$.

\begin{algorithm}[H]
\caption{\textbf{Finding a lower bound for $S_{1}$ and $S_{2}$}}\label{alg:5.1}
\noindent
\begin{tabular}{p{0.15\linewidth}p{0.75\linewidth}}
\toprule
\textbf{Input} &  Two non-constant proper and strict t-subnorms $S_{1}, S_{2}$ with their respective continuous, strictly decreasing additive generators $s_1$ and $s_2$. \\
\midrule
\textbf{Output} & A non-constant proper and strict t-subnorms $S_{l}$ with a continuous, strictly decreasing additive generator $s_n$.\\
\midrule
\textbf{Step 1:} & Compute the normalized additive generators $s_{n_{1}}$ and $s_{n_{2}}$ of $S_{1}$ and $S_{2}$ by Eq.\eqref{eq5}, respectively. \\[5pt]
\textbf{Step 2:} & Compute the function $s_{n_{1}}\circ s_{n_{2}}^{-1}:[1,\infty]\rightarrow[1,\infty]$. \\
\textbf{Step 3:} & Compute $s_{n_{1}}\circ s_{n_{2}}^{-1}(2^{k})$, $k=0,1,2,\ldots.$ \\[5pt]
\textbf{Step 4:} & Compute $s\circ s_{n_{2}}^{-1}(2^{k})$, $k=0,1,2,\ldots$, defined by Eq.(\ref{eq:14}). \\
\textbf{Step 5:} & Compute $s\circ s_{n_{2}}^{-1}(x)$, $x\in[2^{k},2^{k+1}]$, $k=0,1,2,\ldots$,  defined by Eq.(\ref{eq:15}). \\
\textbf{Step 6:} & Compute the normalized additive generators $s:[0,1]\rightarrow[1,\infty]$ by $s\circ s_{n_{2}}^{-1}$ and $s_{n_{2}}$. \\
\textbf{Step 7:} & Compute $S_{l}:[0,1]^2\rightarrow[0,1]$ by $S_{l}(x,y)=s^{(-1)}(s(x) + s(y))$. \\
\textbf{Step 8:} & End. \\
\bottomrule
\end{tabular}
\end{algorithm}

\begin{remark}\label{remark:5.1} \emph{In Theorem \ref{theorem:5.1}, in general, $S_{l}\notin \{S_{1}, S_{2}\}$ even if $S_{1}\leq S_{2}$ as shown by the following example.}
\end{remark}

\begin{example}\label{exp5.1}\emph{Consider $S_{1}$ and $S_{2}$ in Example \ref{exp4.1}. \\[5pt]
Step1: The normalized additive generators are $s_{n_{1}}:[0,1]\rightarrow[1, \infty]$ with $s_{n_{1}}(x)=-\frac{\ln x}{\ln2}+1$ and $s_{n_{2}}:[0,1]\rightarrow[1, \infty]$ with $s_{n_{2}}(x)=\frac{2}{x}-1$, respectively. \\[5pt]
Step2: The function $\theta(x)\triangleq s_{n_{1}} \circ s_{n_{2}}^{-1}(x)=\frac{\ln (x+1)}{\ln2}$. \\[5pt]
Step3: $\theta(1)=1$, $\theta(2)=\frac{\ln3}{\ln2}$, $\theta(4)=\frac{\ln5}{\ln2}$, $\theta(8)=\frac{2\ln3}{\ln2}$, \ldots, $\theta(2^{k})=\frac{\ln(2^{k}+1)}{\ln2}$, $k=0,1,2,\ldots.$\\[5pt]
Step4: Compute $s\circ s_{n_{2}}^{-1}(2^{k})$, $k=0,1,2,\ldots$.
\begin{itemize}
  \item $k=0$, $s\circ s_{n_{2}}^{-1}(1)=1\triangleq p_{0}$;
  \item $k=1$, $s\circ s_{n_{2}}^{-1}(2)=\mbox{min}\left(\frac{\ln3}{\ln2}, \frac{3}{2}\right)=\frac{3}{2}\triangleq p_{1}$;
  \item $k=2$, $s\circ s_{n_{2}}^{-1}(4)=\mbox{min}\left(\frac{3}{2}\cdot \frac{\ln5}{\ln3}, \frac{11}{6}\right)=\frac{11}{6}\triangleq p_{2}$;
  \item $k=3$, $s\circ s_{n_{2}}^{-1}(8)=\mbox{min}\left(\frac{11}{3}\cdot \frac{\ln3}{\ln5},\frac{139}{66}\right)=\frac{139}{66}\triangleq p_{3}$;
  \item $\ldots$
  \item $k=n$, $s\circ s_{n_{2}}^{-1}(2^{n})=\mbox{min}\left(p_{n-1}\cdot \frac{\theta(2^{n})}{\theta(2^{n-1})}, p_{n-1}+\frac{1}{2p_{n-1}}\right)\triangleq p_{n}$.
\end{itemize}
Step5: Compute $s\circ s_{n_{2}}^{-1}(x)$, $x\in[2^{k},2^{k+1}]$, $k=0,1,2,\ldots$.
\begin{itemize}
  \item $k=0$, i.e., $x\in[1,2]$, $A_{0}=\frac{\frac{3}{2}-1}{\frac{\ln3}{\ln2}-1}=\frac{\ln2}{2\ln\frac{3}{2}}\triangleq q_{1}$, $B_{0}=p_0-q_{1}=1-\frac{\ln2}{2\ln\frac{3}{2}}$. Then $s\circ s_{n_{2}}^{-1}(x)=A_{0}\cdot \theta(x)+B_{0}=\frac{\ln\frac{x+1}{2}}{2\ln\frac{3}{2}}+1$ for any $x\in[1,2]$;
  \item $k=1$, i.e., $x\in[2,4]$, $A_{1}=\frac{\frac{11}{6}-\frac{3}{2}}{\frac{\ln5}{\ln2}-\frac{\ln3}{\ln2}}=\frac{\ln2}{3\ln\frac{5}{3}}\triangleq q_{2}$, $B_{1}=p_{1}-q_{2}\cdot \frac{\ln3}{\ln2}=\frac{3}{2}-\frac{\ln3}{3\ln\frac{5}{3}}$. Then $s\circ s_{n_{2}}^{-1}(x)=A_{1}\cdot \theta(x)+B_{1}=\frac{\ln\frac{x+1}{3}}{3\ln\frac{5}{3}}+\frac{3}{2}$ for any $x\in[2,4]$;
  \item $k=2$, i.e., $x\in[4,8]$, $A_{2}=\frac{\frac{139}{66}-\frac{11}{6}}{\frac{2\ln3}{\ln5}-\frac{\ln5}{\ln2}}=\frac{3\ln2}{11\ln\frac{9}{5}}\triangleq q_{3}$, $B_{2}=p_2-q_{3}\cdot \frac{\ln5}{\ln2}=\frac{11}{6}-\frac{3\ln5}{11\ln\frac{9}{5}}$. Then $s\circ s_{n_{2}}^{-1}(x)=A_{2}\cdot \theta(x)+B_{2}=\frac{3\ln\frac{x+1}{5}}{11\ln\frac{9}{5}}+\frac{11}{6}$ for any $x\in[4,8]$;
  \item $\ldots$
  \item $s\circ s_{n_{2}}^{-1}(x)=A_{k}\cdot \theta(x)+B_{k}$ for any $x\in[2^{k},2^{k+1}]$, $k=0,1,2,\ldots$.
\end{itemize}
Step6: Compute the normalized additive generator $s:[0,1]\rightarrow[1,\infty]$ (see Fig. \ref{fig6}). Let $y=s_{n_{2}}(x)=\frac{2}{x}-1$. Then $s(x)=\left(s\circ s_{n_{2}}^{-1}\right)(y)$.
\begin{itemize}
  \item When $y\in[1,2]$, i.e., $x\in\left[\frac{2}{3}, 1\right]$, $s(x)=\frac{\ln\frac{1}{x}}{2\ln\frac{3}{2}}+1$;
  \item When $y\in[2,4]$, i.e., $x\in\left[\frac{2}{5}, \frac{2}{3}\right]$, $s(x)=\frac{\ln\frac{2}{3x}}{3\ln\frac{5}{3}}+\frac{3}{2}$;
  \item When $y\in[4,8]$, i.e., $x\in\left[\frac{2}{9}, \frac{2}{5}\right]$, $s(x)=\frac{3\ln\frac{2}{5x}}{11\ln\frac{9}{5}}+\frac{11}{6}$;
  \item $\cdots$.
\end{itemize}
Step7: Compute $S_{l}$ by $S_{l}(x,y)=s^{(-1)}(s(x)+s(y))$ for any $x,y\in[0,1]$. By Theorem \ref{theorem:5.1}, $S_{l}\leq \min(S_{1}, S_{2})$.}

\emph{One can verify that $S_{l}\notin \{S_{1}, S_{2}\}$.}
\end{example}

\begin{figure}[H]
\centering
\includegraphics[width=0.8\textwidth, keepaspectratio]{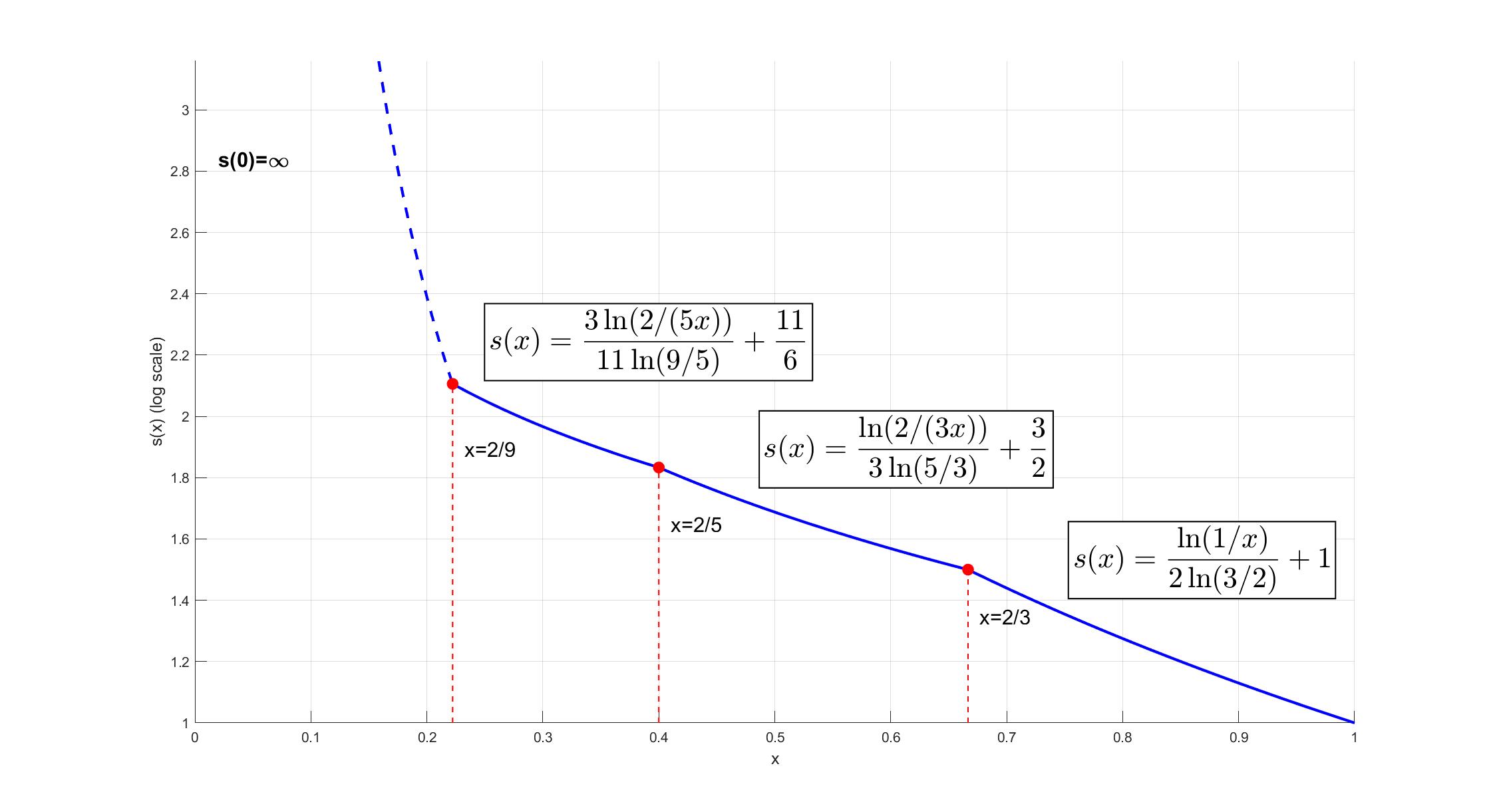}
\caption{Additive generator $s$ of $S_{l}$ in Example \ref{exp5.1}.}
\label{fig6}
\end{figure}

In summary, from Theorems \ref{theorem:4.1} and \ref{theorem:5.1} we immediately get the following corollary.
\begin{corollary}\label{coro5.3}
Let $S_{1}:[0,1]^2\rightarrow[0,1]$ and $S_{2}:[0,1]^2\rightarrow[0,1]$ be two non-constant proper and strict t-subnorms with continuous, strictly decreasing additive generators. Then there exist two non-constant proper and strict t-subnorms $S_{l}:[0,1]^2\rightarrow[0,1]$ and $S_{u}:[0,1]^2\rightarrow[0,1]$ generated by continuous, strictly decreasing additive generators such that
$$S_{l}\leq \min(S_{1}, S_{2})\leq \max(S_{1}, S_{2})\leq S_{u}.$$
\end{corollary}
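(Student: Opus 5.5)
This corollary simply combines Theorem \ref{theorem:4.1} and Theorem \ref{theorem:5.1}, so no new construction is needed. Let $S_{1}$ and $S_{2}$ be as in the statement: non-constant, proper and strict t-subnorms with continuous, strictly decreasing additive generators.

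The argument has three steps.

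\begin{enumerate}
\item[(a)] Apply Theorem \ref{theorem:5.1}. It gives a non-constant proper and strict t-subnorm $S_{l}$, generated by a continuous, strictly decreasing additive generator, with $S_{l}\leq \min(S_{1},S_{2})$. Concretely, this is the normalized generator $s$ built from the dyadic points $2^{k}$ via Eqs.~(\ref{eq:14}) and (\ref{eq:15}).
\item[(b)] Apply Theorem \ref{theorem:4.1} to the same pair. It gives a non-constant proper and strict t-subnorm $S_{u}$, generated by a continuous, strictly decreasing additive generator, with $S_{u}\geq \max(S_{1},S_{2})$. Here the generator is $s_{n}=m\circ f$, with $m(x)=1/D(1,x)$ as in Eq.~(\ref{eq:8}).
\item[(c)] Chain the inequalities. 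The middle one, $\min(S_{1},S_{2})\leq\max(S_{1},S_{2})$, holds pointwise on $[0,1]^{2}$ trivially. This yields
$$S_{l}\leq \min(S_{1},S_{2})\leq \max(S_{1},S_{2})\leq S_{u}.$$
\end{enumerate}

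The only thing to check is that the hypotheses of both theorems match the statement exactly, and they do: both assume two non-constant proper strict t-subnorms with continuous, strictly decreasing generators. Both conclusions also already state that the bound is non-constant, proper and strict. So there is no real obstacle. At most one might double-check that the constructed generators $s$ and $s_{n}$ are bijections onto $[1,\infty]$, so that the bounds are indeed proper ($s(1)=1>0$) and strict ($s(0)=\infty$) by Proposition \ref{pro2.1}. Those facts were established inside the proofs of the two theorems, in Statements B and C and in step (III) respectively.
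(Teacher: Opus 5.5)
Your proposal is correct and follows the paper's argument exactly: the corollary is obtained by combining the lower bound $S_l$ from Theorem~\ref{theorem:5.1} with the upper bound $S_u$ from Theorem~\ref{theorem:4.1}, plus the trivial pointwise inequality $\min(S_1,S_2)\le\max(S_1,S_2)$. Your extra check that both constructed generators satisfy $s(1)=1$ and $s(0)=\infty$, so that both bounds are proper, strict and non-constant by Proposition~\ref{pro2.1}, agrees with what those two proofs establish.
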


Similar to Theorem \ref{thm4.1}, we have the following result.
\begin{theorem}[\cite{VM2001}]\label{thm4.15}
Let $T_{1}:[0,1]^2\rightarrow[0,1]$ and $T_{2}:[0,1]^2\rightarrow[0,1]$ be two continuous Archimedean t-norms. Then there exists a continuous Archimedean t-norm $T$ such that $T\leq \min(T_{1}, T_{2})$.
\end{theorem}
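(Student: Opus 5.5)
We reduce the statement to a question about additive generators. By Theorem \ref{theorem:2.1}, choose continuous additive generators $t_1,t_2:[0,1]\rightarrow[0,\infty]$ of $T_1,T_2$, both strictly decreasing with $t_i(1)=0$. By Remark \ref{remark:3.1}, a continuous Archimedean t-norm $T$ with continuous additive generator $t$ satisfies $T\leq T_i$ if and only if $t\circ t_i^{-1}:[0,t_i(0)]\rightarrow[0,t(0)]$ is subadditive. So it suffices to construct one continuous, strictly decreasing $t$ with $t(1)=0$ such that both $t\circ t_1^{-1}$ and $t\circ t_2^{-1}$ are subadditive.

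The simplest candidate is a nilpotent lower bound, which avoids matching the behaviour at $\infty$. We normalize so that $t(0)=1$ and take $g_i=t\circ t_i^{-1}$. On the region $u\geq t_i(0)$ we read $g_i$ as the constant $1$. This is harmless: Theorem \ref{theorem:3.1} only needs the inequality when $u+v$ lies in the domain, and subadditivity there follows once $g_i$ is nondecreasing.

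We build $t$ as a Łukasiewicz-type generator that is ``steep near $1$''. Fix $a\in(0,1)$ and set $\varepsilon_i=t_i(a)>0$; this quantity is finite, including in the strict case. Define
$$t(x)=\max\Bigl(\frac{t_1(x)}{\varepsilon_1},\frac{t_2(x)}{\varepsilon_2},\,\text{capped at }1\Bigr)$$
on $[a,1]$, and extend $t$ to $[0,a]$ strictly decreasing from its value at $a$ up to $t(0)=1+\delta$. A cleaner version is $t=\min\bigl(1,\max(t_1/\varepsilon_1,t_2/\varepsilon_2)\bigr)$, followed by a small strictly decreasing perturbation on $[0,a]$.

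We then check subadditivity of $g_1(u)=t(t_1^{-1}(u))$. For $u\leq\varepsilon_1$ we have $g_1(u)\geq u/\varepsilon_1$. Hence $g_1(u)+g_1(v)\geq 1$ whenever $u+v\geq\varepsilon_1$, and since $g_1\leq 1+\delta$, subadditivity there needs only a slack argument. For $u+v<\varepsilon_1$ we need $g_1$ subadditive on a small interval, which brings in the term $t_2/\varepsilon_2$ composed with $t_1^{-1}$.

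The main obstacle is exactly this local subadditivity, because an arbitrary $t_2\circ t_1^{-1}$ need not be subadditive. To handle it we use the Lemma \ref{lemm5.1} technique and the dyadic construction from the proof of Theorem \ref{theorem:5.1}, now run toward $0$ instead of $\infty$. We define $\varphi=t\circ t_1^{-1}$ on the points $\varepsilon_12^{-k}$ by taking the maximum of ``doubling'' conditions, $\varphi(\varepsilon_12^{-k})\geq\tfrac12\varphi(\varepsilon_12^{-k+1})$, while also dominating $t_2\circ t_1^{-1}$ proportionally. Between dyadic points we interpolate piecewise linearly in the variable $t_2\circ t_1^{-1}$, as in Statement A of that proof. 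This makes $\varphi(x)/x$ non-increasing on each piece for both compositions simultaneously, and Lemma \ref{lemm5.1} then gives subadditivity. The delicate points are showing that the resulting $t$ stays continuous at $1$ (so that $\varphi(u)\to 0$ as $u\to 0$, using the geometric decay of the dyadic values) and, in the strict case, finiteness of $t(0)$. If this fails, the fallback is to cite the Mesiarová result \cite{VM2001} directly, as the paper does.
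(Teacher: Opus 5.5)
\textbf{There is a genuine gap.} The paper gives no proof of this statement; it imports it from \cite{VM2001}. Your closing fallback is therefore exactly what the paper does, and your reduction via Theorem~\ref{theorem:3.1} and Remark~\ref{remark:3.1} is fine. The constructive part, however, fails at its central step.

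\textbf{Why the star-shaped mechanism fails.} Interpolating affinely in $w=t_2\circ t_1^{-1}(u)$ makes $t\circ t_2^{-1}$ affine on each piece. That is all Statement~A in the proof of Theorem~\ref{theorem:5.1} gives, so an endpoint-ratio condition yields star-shapedness for that composition only. On the same piece $\varphi=t\circ t_1^{-1}=A\,(t_2\circ t_1^{-1})+B$ is arbitrary. The doubling inequality $\varphi(2u)\le 2\varphi(u)$ at the grid points says nothing about general pairs $u,v$.

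In fact, no $t$ can make both compositions star-shaped near $0$ in general. That $t\circ t_i^{-1}(u)/u$ is non-increasing near $u=0$ means $t/t_i$ is non-decreasing near $x=1$, i.e.\ $d(-\ln t)\le d(-\ln t_i)$ as Stieltjes measures there. Take $t_1(x)=1-x$ and $t_2(x)=1-c(x)$, with $c$ a strictly increasing continuous singular function (e.g.\ Minkowski's question-mark function). Then $d(-\ln t)$ is dominated both by an absolutely continuous measure and by a singular one, hence vanishes. This contradicts strict monotonicity of $t$. So Lemma~\ref{lemm5.1} can serve for at most one of the two compositions.

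In the proof of Theorem~\ref{theorem:5.1}, the other composition is handled by the increment bound (the second term of the minimum in Eq.~\eqref{eq:14}), combined with $x,y\ge 1$ and $s\circ s_{n_2}^{-1}(1)=1$. That is, it relies on properness. For t-norms $g_i(0)=0$, so this uniform slack disappears. The same issue undermines your ``slack argument'' for $u+v\ge\varepsilon_1$. For $u\ge\varepsilon_1$ and $v\to 0$ you need $g_i(u+v)-g_i(u)\le g_i(v)\to 0$ for both $i$ at once. That is a simultaneous modulus-of-continuity condition on your perturbation, not a slack estimate.

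\textbf{What is missing.} You need to obtain subadditivity from the increment form $g(u+v)-g(u)\le g(v)$ for $v\le u$, exploiting uniform continuity of $\theta=t_2\circ t_1^{-1}$ and $\theta^{-1}$ rather than star-shapedness. One route that should work:
\begin{enumerate}
\item[(a)] Let $g=t\circ t_1^{-1}$ equal $\ell_0+\eta\sigma$ on $[\delta_0,t_1(0)]$, with $\sigma$ strictly increasing from $0$ to $1$ and $0<\eta\le \ell_0/3$.
\item[(b)] Let $g$ rise linearly from $\ell_{k+1}$ to $\ell_k$ on $[\delta_{k+1},\delta_k]$, with slowly decaying levels $\ell_{k+1}=\tfrac23\ell_k$.
\item[(c)] Choose $\delta_j\le\delta_{j-1}/2$ recursively so small that two error terms stay below $\tfrac13\ell_j$. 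The first is the maximal slope of $g$ on $[\delta_j,\delta_0]$ (bounded in terms of $\delta_0,\dots,\delta_{j-1}$) times $\delta_j$, plus $\eta$ times the modulus of continuity of $\sigma$ at $\delta_j$. The second is that slope times the modulus of continuity of $\theta^{-1}$ at $\theta(\delta_j)$, plus $\eta$ times the modulus of $\sigma\circ\theta^{-1}$ at $\theta(\delta_j)$.
\end{enumerate}
A case check then splits off the part of the increment inside the shell containing the left endpoint, which is at most $\ell_j-g(v)$ (resp.\ $\ell_j-g(\theta^{-1}(z))$). This gives the increment inequality for both $g$ and $g\circ\theta^{-1}$. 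Then $t=g\circ t_1$ generates a nilpotent $T\le\min(T_1,T_2)$. Without an argument of this kind, your proposal amounts to the citation.
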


In conclusion, combining Theorems \ref{thm4.1} and \ref{thm4.15} and Corollary \ref{coro5.3}, we have the following theorem immediately.
\begin{theorem}\label{theorem:5.2}
Let $S_{1}:[0,1]^2\rightarrow[0,1]$ and $S_{2}:[0,1]^2\rightarrow[0,1]$ be two t-subnorms with continuous, strictly decreasing additive generators. If both $S_{1}$ and $S_{2}$ are strict, then there exist two strict t-subnorms $S_{l}:[0,1]^2\rightarrow[0,1]$ and $S_{u}:[0,1]^2\rightarrow[0,1]$ generated by continuous, strictly decreasing additive generators such that
$$S_{l}\leq \min(S_{1}, S_{2})\leq \max(S_{1}, S_{2})\leq S_{u}.$$
\end{theorem}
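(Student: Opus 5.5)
The plan is a case analysis on whether $S_1$ and $S_2$ are proper or are t-norms, combining earlier results. The upper bound $S_u$ is handled by Theorem \ref{theorem:4.2}, which already treats all strict pairs. So only the lower bound $S_l$ needs work, and we may build it in each case and then pair it with $S_u$.

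\emph{Degenerate cases.} By Proposition \ref{pro2.1}(iii), a strict $S_i$ has $s_i(0)=\infty$. Then Proposition \ref{pro2.1}(iv) gives $s_i(0)>2s_i(1)$, so $S_i\neq Z$ and both t-subnorms are non-constant.

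\emph{Case 1: both proper.} Corollary \ref{coro5.3} gives $S_l$ and $S_u$ directly.

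\emph{Case 2: both t-norms.} By Proposition \ref{pro2.1}(ii) we have $s_i(1)=0$, so $S_1$ and $S_2$ are strict continuous Archimedean t-norms. For the lower bound I would not cite Theorem \ref{thm4.15} as it stands, since it only yields a continuous Archimedean $T$, which might be nilpotent. Instead I would construct a strict one directly.
\begin{itemize}
\item By Theorem \ref{theorem:3.1} and Remark \ref{remark:3.1}, it suffices to find a continuous strictly decreasing $t$ with $t(1)=0$ and $t(0)=\infty$ such that $t\circ s_i^{-1}$ is subadditive on $[0,\infty]$ for $i=1,2$.
\item Concretely, I would take a strict lower bound $T'$ of $S_1,S_2$ from the literature cited with Theorem \ref{thm4.15} (\cite{VM2001}).
\item Alternatively, I would form the product-type construction $T'(x,y)=T_P(T''(x,y),\cdot)$. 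That route is riskier, so I would fall back on the generator approach.
\end{itemize}

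A cleaner route for Case 2 is to reduce to Case 1. Take $S_i'$ with normalized generator $1+s_i/c$ for a constant $c>0$. This is proper and strict, and by the computation in the proof of Proposition \ref{pro4.3} it satisfies $S_i'\le S_i$. Applying Theorem \ref{theorem:5.1} to $S_1'$ and $S_2'$ then gives a proper strict $S_l\le\min(S_1',S_2')\le\min(S_1,S_2)$. The upper bound comes from Theorem \ref{theorem:4.2}.

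\emph{Case 3: mixed, say $S_1$ proper and $S_2$ a t-norm.} Replace $S_2$ by $S_2'$ as above and apply Theorem \ref{theorem:5.1} to $S_1$ and $S_2'$.

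This reduction means the only new ingredient is the inequality $S'\le S$. It follows from the proof of Proposition \ref{pro4.3} read backwards. Writing $s'=1+t$, we get
\[
S'(x,y)=t^{-1}\bigl(\min(\infty,t(x)+t(y)+1)\bigr)\le t^{-1}\bigl(t(x)+t(y)\bigr)=S(x,y),
\]
because $t^{-1}$ is decreasing. So the main obstacle is just to check that $s'$ is a legitimate normalized generator of a proper strict t-subnorm: it is continuous, strictly decreasing, satisfies $s'(1)=1$ and $s'(0)=\infty$, and is proper by Proposition \ref{pro2.1}(ii). All of this is immediate.

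Combining the lower bounds from the three cases with the upper bound from Theorem \ref{theorem:4.2} proves the theorem.
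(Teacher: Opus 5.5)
Your argument is correct, but it does not follow the paper's route. The paper proves Theorem~\ref{theorem:5.2} in one line. It combines Theorem~\ref{thm4.1} and Theorem~\ref{thm4.15}, which give bounds for continuous Archimedean t-norms and are meant for the case where both $S_i$ are t-norms, with Corollary~\ref{coro5.3} for the case where both are proper.

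You take the upper bound directly from Theorem~\ref{theorem:4.2}. For the lower bound, you replace each strict t-norm $S_i$ by the proper strict t-subnorm generated by $1+s_i/c$, which lies below $S_i$ since $S_i'(x,y)=t^{-1}(t(x)+t(y)+1)\le t^{-1}(t(x)+t(y))$. You then apply Theorem~\ref{theorem:5.1}.

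This closes two gaps that the paper's citation leaves open:
\begin{itemize}
\item Theorem~\ref{thm4.15}, as stated, only guarantees a continuous Archimedean lower bound, which may be nilpotent. A nilpotent bound cannot be pushed down to a strict one, because a strict t-subnorm satisfies $S(x,y)>0$ for $x,y>0$.
\item The mixed case, where exactly one $S_i$ is proper, is covered neither by Corollary~\ref{coro5.3} nor by Theorem~\ref{thm4.15}.
\end{itemize}
Your shift handles both using only results proved in the paper.

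No such issue arises for the upper bound. Any continuous Archimedean t-norm above a strict $S_i$ has no zero divisors and is therefore strict, so citing Theorem~\ref{theorem:4.2} is safe.

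What the paper's route would give, if a strict version of Theorem~\ref{thm4.15} were available, is a lower bound that is itself a t-norm when both $S_i$ are t-norms. Your bound is always a proper t-subnorm, which the statement allows.

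Two small points. First, the shift $s\mapsto s+c$ works for every strict generator, proper or not, since $s^{-1}(s(x)+s(y)+c)\le s^{-1}(s(x)+s(y))$; so the case split for the lower bound is not needed. Second, the hedged bullets in your Case~2 (a strict bound ``from the literature'' and a product-type construction) are unsupported. Delete them, because the reduction already does the job.
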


By duality, an analogous result holds for t-superconorms generated by continuous, strictly increasing additive generators as follows.
\begin{corollary}\label{coro5.4}
Let $M_{1}:[0,1]^2\rightarrow[0,1]$ and $M_{2}:[0,1]^2\rightarrow[0,1]$ be two t-superconorms with continuous, strictly increasing additive generators. If both $M_{1}$ and $M_{2}$ are strict, then there exist two strict t-superconorms $M_{l}:[0,1]^2\rightarrow[0,1]$ and $M_{u}:[0,1]^2\rightarrow[0,1]$ generated by continuous, strictly increasing additive generators such that
$$M_{l}\leq \min(M_{1}, M_{2})\leq \max(M_{1}, M_{2})\leq M_{u}.$$
\end{corollary}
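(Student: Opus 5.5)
We derive Corollary \ref{coro5.4} from Theorem \ref{theorem:5.2} by the duality $M\mapsto M^{d}$, $M^{d}(x,y)=1-M(1-x,1-y)$, which exchanges t-subnorms and t-superconorms. The plan is to show that this duality respects additive generators, strictness and the pointwise order, and then transport the bounds.

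First, the generator correspondence. Let $M$ be a t-superconorm with a continuous, strictly increasing additive generator $m:[0,1]\to[0,\infty]$, so that $M(x,y)=m^{(-1)}(m(x)+m(y))$, where $m^{(-1)}(y)=m^{-1}(\min(y,m(1)))$. Put $s(x)=m(1-x)$. Then $s$ is continuous and strictly decreasing, and $s^{-1}(u)=1-m^{-1}(u)$ on $[s(1),s(0)]=[m(0),m(1)]$. Hence
\[
1-M(1-x,1-y)=1-m^{-1}(\min(m(1),m(1-x)+m(1-y)))=s^{-1}(\min(s(0),s(x)+s(y))),
\]
so $M^{d}$ is the t-subnorm generated by $s$ via Eq.~\eqref{eq:1}. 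The same computation read backwards shows that the dual of a t-subnorm with a continuous, strictly decreasing generator $s$ is a t-superconorm with the continuous, strictly increasing generator $m(x)=s(1-x)$.

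Second, strictness and order. Strictness of $M$ should be defined dually, namely $M$ is continuous and $M(x,y)<M(x,z)$ whenever $x<1$ and $y<z$. The substitution $x\mapsto 1-x$ turns this into strictness of $M^{d}$ in the sense of Definition \ref{def2.7}(ii); equivalently, $m(1)=\infty$ if and only if $s(0)=\infty$, as in Proposition \ref{pro2.1}(iii). The duality is also order-reversing: $M_{1}\le M_{2}$ if and only if $M_{1}^{d}\ge M_{2}^{d}$. It follows that $\min$ and $\max$ are swapped: $(\min(M_{1},M_{2}))^{d}=\max(M_{1}^{d},M_{2}^{d})$ and $(\max(M_{1},M_{2}))^{d}=\min(M_{1}^{d},M_{2}^{d})$.

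Third, the transport. Given $M_{1}$ and $M_{2}$, set $S_{i}=M_{i}^{d}$. These are strict t-subnorms with continuous, strictly decreasing generators. Theorem \ref{theorem:5.2} gives such strict $S_{l}$ and $S_{u}$ with
\[
S_{l}\le\min(S_{1},S_{2})\le\max(S_{1},S_{2})\le S_{u}.
\]
Define $M_{l}=S_{u}^{d}$ and $M_{u}=S_{l}^{d}$; by the first two steps they are strict t-superconorms with continuous, strictly increasing generators. Dualizing the chain, with the order reversed and $\min$ and $\max$ exchanged, yields
\[
M_{l}\le\min(M_{1},M_{2})\le\max(M_{1},M_{2})\le M_{u}.
\]

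The only delicate point is the pseudo-inverse in the first step. For an increasing generator one must use the convention $m^{(-1)}(y)=m^{-1}(\min(y,m(1)))$, and check that it matches the $\min(s(0),\cdot)$ truncation on the subnorm side. This holds because the reflection $x\mapsto 1-x$ carries the value $m(1)$ to $s(0)$.
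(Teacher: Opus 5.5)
Your proposal is correct and follows the same route as the paper, which obtains this corollary from Theorem~\ref{theorem:5.2} with the single phrase ``by duality.'' You supply the details the paper omits: the generator correspondence $m(x)=s(1-x)$ with matching truncations at $m(1)=s(0)$, preservation of strictness, and the order reversal that swaps $\min$ and $\max$, so that $M_{l}=S_{u}^{d}$ and $M_{u}=S_{l}^{d}$.
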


\subsection{In the case of proper and nilpotent t-subnorms}
%In the case that both $S_{1}$ and $S_{2}$ are proper and nilpotent, we also completely give a lower bound for $S_{1}$ and $S_{2}$.

\begin{theorem}\label{theorem:5.3}
Let $S_{1}:[0,1]^2\rightarrow[0,1]$ and $S_{2}:[0,1]^2\rightarrow[0,1]$ be two non-constant proper and nilpotent t-subnorms with continuous, strictly decreasing additive generators. Then there exists a proper and nilpotent t-subnorm $S_{l}:[0,1]^2\rightarrow[0,1]$ generated by a continuous, strictly decreasing additive generator such that
$$S_{l}\leq \min(S_{1}, S_{2}).$$
\end{theorem}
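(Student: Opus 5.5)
We plan to reduce the statement, via Theorem \ref{theorem:3.1}, to finding one continuous, strictly decreasing bijection $s:[0,1]\rightarrow[1,d]$ with $1<d<\infty$ such that, for $i=1,2$, the function $g_{i}=s\circ s_{n_{i}}^{-1}:[1,c_{i}]\rightarrow[1,d]$ is subadditive. Here $s_{n_{i}}:[0,1]\rightarrow[1,c_{i}]$ are the normalized additive generators from Definition \ref{def4.1}, and $1<c_{i}<\infty$ by Remark \ref{remark:4.1}(i). Once such an $s$ exists, $S_{l}(x,y)=s^{(-1)}(s(x)+s(y))$ is proper by Proposition \ref{pro2.1}(ii), since $s(1)=1>0$. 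It is nilpotent by Proposition \ref{pro2.1}(iii), since $s(0)=d<\infty$. Theorem \ref{theorem:3.1}, applied with the pair $(s,s_{n_{i}})$, then gives $S_{l}\leq S_{i}$ for $i=1,2$.

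We will look for $s$ in the simplest possible form, $s(x)=1+\lambda(1-x)$ with a constant $\lambda>0$. Two facts follow from non-constancy. By Proposition \ref{pro2.1}(iv), $c_{i}>2$. Also, subadditivity only has to be checked for $u,v\geq1$ with $w=u+v\leq c_{i}$. Write $x_{i}(w)=s_{n_{i}}^{-1}(w)$; it is continuous and strictly decreasing, with $x_{i}(1)=1$ and $x_{i}(c_{i})=0$. Then $g_{i}(w)=1+\lambda(1-x_{i}(w))$. Assume without loss of generality that $u\leq v$. Then $v\geq w/2$, and since $g_{i}$ is increasing and $g_{i}(u)\geq1$, we get $g_{i}(u)+g_{i}(v)\geq 1+g_{i}(w/2)$. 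So it suffices to have
$$g_{i}(w)\leq 1+g_{i}(w/2)\quad\text{for all } w\in[2,c_{i}],$$
which is equivalent to $\lambda\,(x_{i}(w/2)-x_{i}(w))\leq1$.

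The key step, and the only real obstacle, is to bound $\delta_{i}=\max_{w\in[2,c_{i}]}(x_{i}(w/2)-x_{i}(w))$ strictly below $1$; this is where we want the lower bound not to collapse to $Z$. The difference is continuous on the compact interval $[2,c_{i}]$, so the maximum is attained. The difference can equal $1$ only if $x_{i}(w/2)=1$ and $x_{i}(w)=0$, that is, only if $w=2$ and $w=c_{i}$ at once. This is impossible because $c_{i}>2$. Hence $\delta_{i}<1$.

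We then take $\lambda=1/\max(\delta_{1},\delta_{2})>1$, or any $\lambda>1$ if both $\delta_{i}=0$ (which cannot actually happen). This makes both $g_{1}$ and $g_{2}$ subadditive. It also gives $d=1+\lambda>2=2s(1)$, so $S_{l}$ is even non-constant by Proposition \ref{pro2.1}(iv), which is stronger than the statement requires. If we only wanted the weaker statement, any $\lambda\leq1$ would already work, because then $g_{i}\leq2\leq g_{i}(u)+g_{i}(v)$. That choice gives a valid but trivial bound, namely $S_{l}=Z$, so we prefer the construction above.
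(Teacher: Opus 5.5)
Your proof is correct, and it takes a genuinely different route from the paper's.

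\textbf{How the paper proceeds.} The paper reuses the machinery of Theorem \ref{theorem:5.1}. It defines $s\circ s_{n_{2}}^{-1}$ at the dyadic points (and at $s_{n_{2}}(0)$) by the recursion \eqref{eq:14}. It then interpolates affinely in the variable $s_{n_{1}}\circ s_{n_{2}}^{-1}$ via \eqref{eq:15}. Subadditivity of $s\circ s_{n_{2}}^{-1}$ comes from the increment bound $\frac{1}{2}$, and subadditivity of $s\circ s_{n_{1}}^{-1}$ comes from Lemma \ref{lemm5.1}.

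\textbf{How you proceed.} You fix the linear generator $s(x)=1+\lambda(1-x)$. You reduce both subadditivity requirements to the single condition $\lambda\,(x_{i}(w/2)-x_{i}(w))\leq 1$ on $[2,c_{i}]$. You then get $\delta_{i}<1$ from compactness together with $c_{i}>2$. Unwinding the pseudo-inverse, your bound is explicitly $S_{l}(x,y)=\max(0,x+y-1-\varepsilon)$ with $\varepsilon=\max(\delta_{1},\delta_{2})\in(0,1)$.

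\textbf{What your route buys.} You get a closed form that is symmetric in $S_{1},S_{2}$. It extends to any finite family without induction: just take the largest $\delta_{i}$. Most importantly, it guarantees non-constancy, which the paper's construction does not. In Example \ref{exp5.2} the paper ends with $Z$. Your recipe gives $\delta_{1}=2-\sqrt{2}$ and $\delta_{2}=\frac{3}{4}$, hence the non-trivial bound $S_{l}(x,y)=\max(0,x+y-\frac{7}{4})$. As you noticed, the statement as written is already witnessed by $Z$ (generated, e.g., by $2-x$). So your version is the substantive one.

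\textbf{What the paper's route buys.} It is uniform with the strict case. A linear generator can never satisfy $s(0)=\infty$, so your idea does not reprove Theorem \ref{theorem:5.1}, whereas the dyadic scheme handles both cases at once.

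\textbf{A small precision.} Restricting to pairs with $u+v\leq c_{i}$ is not itself a consequence of non-constancy. It is the form of the condition in Theorem \ref{theorem:3.1}, whose converse direction needs $c_{i}>2$ to dispose of pairs with $u+v>c_{i}$. Your estimate covers those pairs directly anyway. If $u\leq v$ and $u+v\geq c_{i}$, then $x_{i}(u)+x_{i}(v)\leq 1+x_{i}(c_{i}/2)\leq 1+\delta_{i}$, so $S_{l}$ vanishes wherever $S_{i}$ does.
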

\begin{proof}Since both $S_{1}$ and $S_{2}$ are proper and nilpotent, their normalized additive generators $s_{n_{i}}:[0,1]\rightarrow[1,s_{n_i}(0)]$ satisfy $1<s_{n_{i}}(0)<\infty$ for $i=1, 2$. By Theorem \ref{theorem:3.1}, it suffices to construct a strictly decreasing bijection $s:[0,1]\rightarrow[1,s(0)]$ such that both $s \circ s_{n_{1}}^{-1}$ and $s \circ s_{n_{2}}^{-1}$ are subadditive.

The construction follows precisely the proof of Theorem \ref{theorem:5.1}, with the only essential modification arising from the fact that the domain of $s_{n_{2}}$ is the finite interval $[1,s_{n_{2}}(0)]$ rather than $[1,\infty]$.

Let $k_{0}\in\mathbb{N}$ be the unique integer such that $2^{k_{0}} < s_{n_{2}}(0) \le 2^{k_{0}+1}$. We define the function $s\circ s_{n_{2}}^{-1}$ on $[1, s_{n_{2}}(0)]$ as follows:
\begin{itemize}
  \item Apply Eq.(\ref{eq:14}) to $k=1, 2, \dots, k_{0}$;
  \item $s\circ s_{n_{2}}^{-1}(s_{n_{2}}(0))=\mbox{min}\left(s\circ s_{n_{2}}^{-1}(2^{k_{0}})\cdot \frac{s_{n_{1}}\circ s_{n_{2}}^{-1}(s_{n_{2}}(0))}{s_{n_{1}}\circ s_{n_{2}}^{-1}(2^{k_{0}})}, s\circ s_{n_{2}}^{-1}(2^{k_{0}})+\frac{1}{2\cdot s\circ s_{n_{2}}^{-1}(2^{k_{0}})}\right)$;
\item  Apply Eq.(\ref{eq:15}) to each subinterval $[2^k,2^{k+1}]$ for $k=0, 1, \dots, k_{0}-1$, and on the final subinterval $[2^{k_{0}}, s_{n_{2}}(0)]$ we also use Eq.(\ref{eq:15}), just noting that we have to replace the respective $2^k$and $2^{k+1}$ by $2^{k_{0}}$ and $s_{n_{2}}(0)$.
\end{itemize}

Then $s\circ s_{n_{2}}^{-1}:[1,s_{n_{2}}(0)]\rightarrow[1,s(0)]$ is a continuous, strictly increasing bijection. Consequently, $s=(s \circ s_{n_{2}}^{-1}) \circ s_{n_{2}}:[0,1]\rightarrow[1, s(0)]$ is a continuous, strictly decreasing bijection.

The verification of subadditivity for both $s \circ s_{n_{2}}^{-1}$ and $s \circ s_{n_{1}}^{-1}$ proceeds exactly as in the proof of Theorem \ref{theorem:5.1}. This completes the proof.
\end{proof}

From the proof of Theorem \ref{theorem:5.3}, we can also derive an algorithm similar to Algorithm \ref{alg:5.1}, which is used to find a nilpotent lower bound of two proper nilpotent t-subnorms. Here we omit it.
\begin{remark}\label{remark:5.1} \emph{Sometimes, the t-subnorm $S_{l}$ obtained by Theorem \ref{theorem:5.3} may be constant, i.e., $S_{l}=Z$, as illustrated by the following example.}
\end{remark}

\begin{example}\label{exp5.2}\emph{Let $S_{1}:[0,1]^2\rightarrow[0,1]$ with $S_{1}(x,y)=\mbox{max}\left(2-\sqrt{(2-x)^{2}+(2-y)^{2}},0\right)$ and $S_{2}:[0,1]^2\rightarrow[0,1]$ with $S_{2}(x,y)=\mbox{max}\left(x+y-\frac{3}{2},0\right)$ generated by their continuous, strictly decreasing additive generators $s_{1}:[0,1]\rightarrow[\frac{1}{4},1]$ with $s_{1}(x)=(1-\frac{x}{2})^{2}$ and $s_{2}:[0,1]\rightarrow[\frac{1}{3},1]$ with $s_{2}(x)=1-\frac{2}{3}x$. Obviously,  both $S_{1}$ and $S_{2}$ are proper and nilpotent t-subnorms (see Fig. \ref{fig7}). \\[5pt]
Step1: The normalized additive generators are $s_{n_{1}}:[0,1]\rightarrow[1,4]$ with $s_{n_{1}}(x)=(2-x)^{2}$ and $s_{n_{2}}:[0,1]\rightarrow[1,3]$ with $s_{n_{2}}(x)=3-2x$, respectively. \\[5pt]
Step2: The function $\theta(x)\triangleq s_{n_{1}} \circ s_{n_{2}}^{-1}(x)=\frac{(1+x)^{2}}{4}$, and clearly $\theta$ is a function from $[1,3]$ to $[1,4]$. \\[5pt]
Step3: $\theta(1)=1$, $\theta(2)=\frac{9}{4}$, $\theta(3)=4$. \\[5pt]
Step4: Compute $s\circ s_{n_{2}}^{-1}(2^{k})$, where $k=0,1$, and $s\circ s_{n_{2}}^{-1}(3)$.
\begin{itemize}
  \item $k=0$, $s\circ s_{n_{2}}^{-1}(1)=1\triangleq p_{0}$;
  \item $k=1$, $s\circ s_{n_{2}}^{-1}(2)=\mbox{min}\left(\frac{9}{4}, \frac{3}{2}\right)=\frac{3}{2}\triangleq p_{1}$;
  \item $s\circ s_{n_{2}}^{-1}(3)=\mbox{min}\left(\frac{8}{3}, \frac{11}{6}\right)=\frac{11}{6}\triangleq p_{2}$.
  \end{itemize}
Step5: Compute $s\circ s_{n_{2}}^{-1}(x)$, $x\in[2^{k},2^{k+1}]$, $k=0,1$.
\begin{itemize}
  \item $k=0$, i.e., $x\in[1,2]$, $A_{0}=\frac{\frac{3}{2}-1}{\frac{9}{4}-1}=\frac{2}{5}\triangleq q_{1}$, $B_{0}=p_{0}-q_{1}=\frac{3}{5}$. Then $s\circ s_{n_{2}}^{-1}(x)=A_{0}\cdot \theta(x)+B_{0}=\frac{(1+x)^{2}}{10}+\frac{3}{5}$ for any $x\in[1,2]$;
  \item $k=1$, i.e., $x\in[2,3]$, $A_{1}=\frac{\frac{11}{6}-\frac{3}{2}}{4-\frac{9}{4}}=\frac{4}{21}\triangleq q_{2}$, $B_{1}=p_{1}-q_{2}\cdot \frac{9}{4}=\frac{15}{14}$. Then $s\circ s_{n_{2}}^{-1}(x)=A_{1}\cdot \theta(x)+B_{1}=\frac{(1+x)^{2}}{21}+\frac{15}{14}$ for any $x\in[2,3]$.
  \end{itemize}
Step6: Compute the normalized additive generator $s:[0,1]\rightarrow[1,\frac{11}{6}]$ (see Fig. \ref{fig8}). Let $y=s_{n_{2}}(x)=3-2x$. Then $s(x)=\left(s\circ s_{n_{2}}^{-1}\right)(y)$.
\begin{itemize}
  \item When $y\in[1,2]$, i.e., $x\in[\frac{1}{2},1]$, $s(x)=\frac{2}{5}(2-x)^{2}+\frac{3}{5}$;
  \item When $y\in[2,3]$, i.e., $x\in[0,\frac{1}{2}]$, $s(x)=\frac{4}{21}(2-x)^{2}+\frac{15}{14}$.
\end{itemize}
 Since $s(0)=\frac{11}{6}<2=2s(1)$, we have $S_{l}=Z$ by Proposition \ref{pro2.1}(iv).}
\end{example}

\begin{figure}[H]
\centering
\includegraphics[width=1.0\textwidth, keepaspectratio]{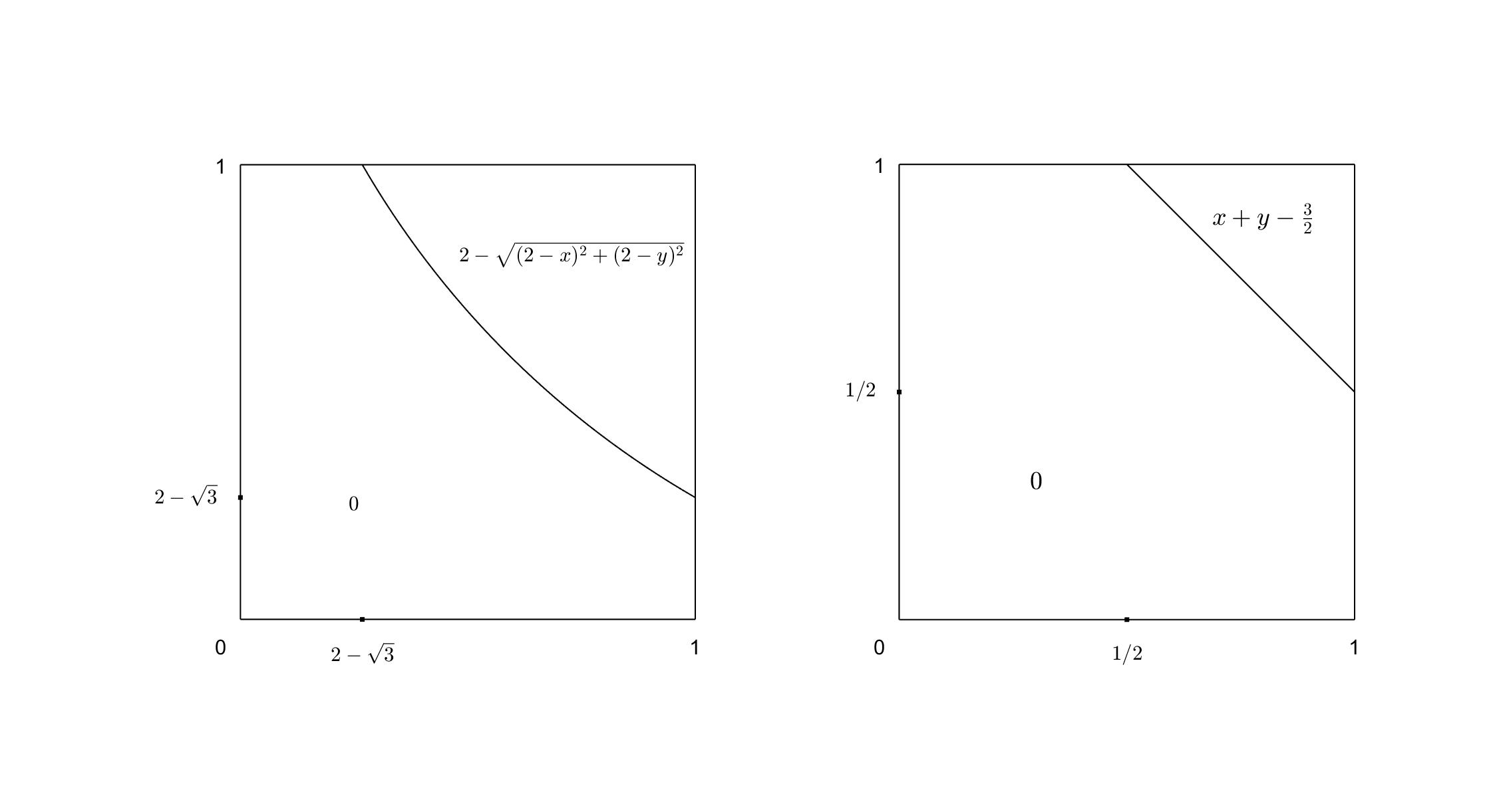}
\caption{two proper and nilpotent t-subnorms in Example \ref{exp5.2}.}
\label{fig7}
\end{figure}

\begin{figure}[H]
\centering
\includegraphics[width=0.8\textwidth, keepaspectratio]{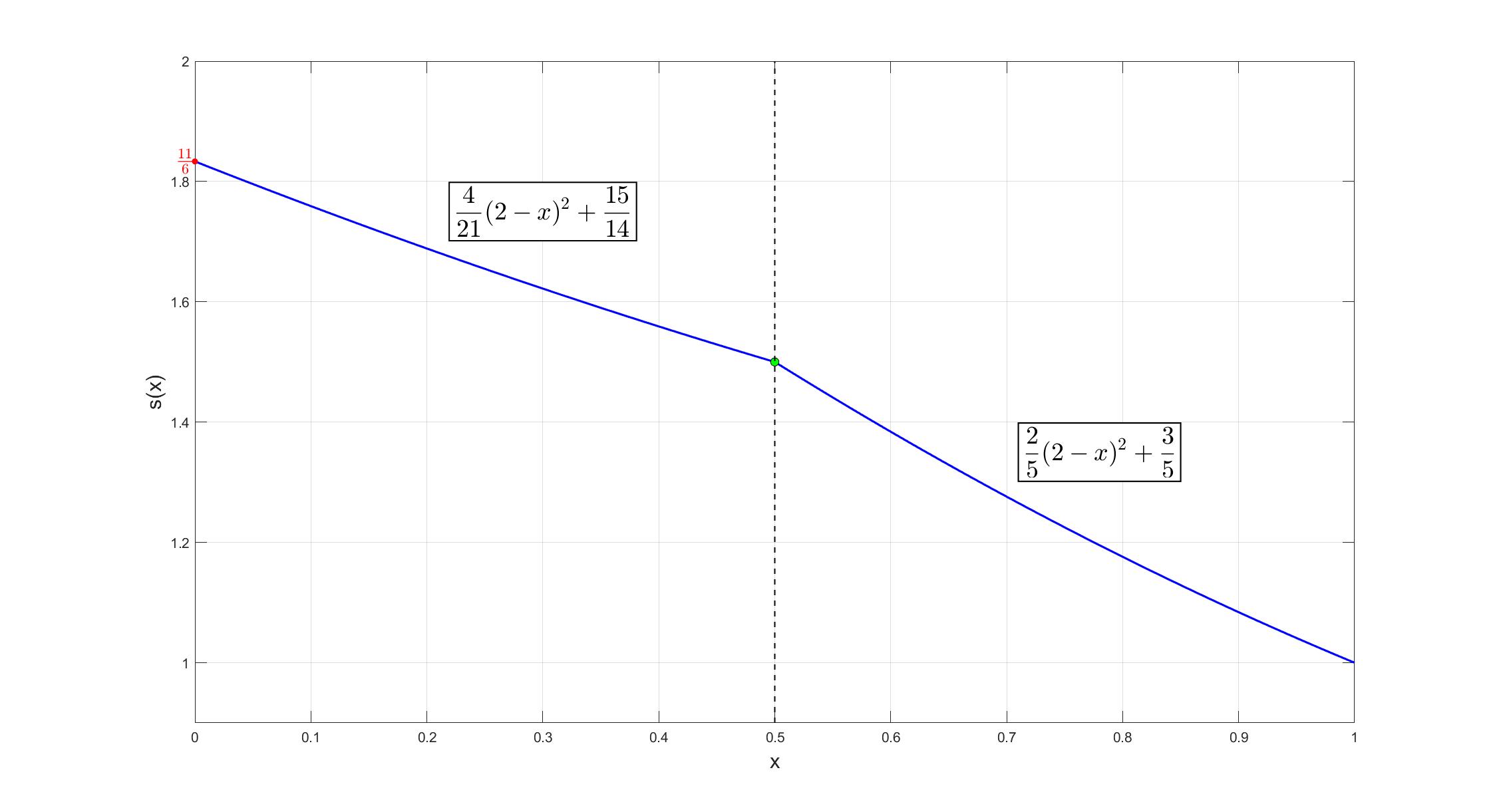}
\caption{Additive generator $s$ of $S_{l}$ in Example \ref{exp5.2}.}
\label{fig8}
\end{figure}

By induction, the following corollary is an immediate consequence of Theorem \ref{theorem:5.3}.
\begin{corollary}\label{coro5.5}
Let $S_{i}:[0,1]^2\rightarrow[0,1]$, $i=1,2,\ldots,n$, be a finite family of non-constant proper and nilpotent t-subnorms with continuous, strictly decreasing additive generators. Then there exists a proper and nilpotent t-subnorm $S_{l}:[0,1]^2\rightarrow[0,1]$ generated by a continuous, strictly decreasing additive generator such that
$$S_{l}\leq \min(S_{1}, S_{2},\ldots,S_{n}).$$
\end{corollary}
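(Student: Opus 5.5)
We argue by induction on $n$, with Theorem \ref{theorem:5.3} as the engine. For $n=1$ we may take $S_{l}=S_{1}$, since $S_{1}$ is itself proper and nilpotent with a continuous, strictly decreasing additive generator. For $n=2$ the claim is exactly Theorem \ref{theorem:5.3}.

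For the inductive step, suppose $n\geq 3$ and that a proper nilpotent lower bound $S'$ of $S_{1},\ldots,S_{n-1}$ has been found. We apply Theorem \ref{theorem:5.3} to the pair $S'$ and $S_{n}$. This gives $S_{l}\leq\min(S',S_{n})\leq\min(S_{1},\ldots,S_{n})$, where the last inequality holds because $S'\leq S_i$ pointwise for $i\le n-1$.

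The main obstacle is that Theorem \ref{theorem:5.3} requires both inputs to be \emph{non-constant}. However, it only outputs a proper nilpotent t-subnorm, which may equal $Z$ (Example \ref{exp5.2}). If at some stage $S'=Z$, then $S_{l}:=Z$ already works, provided $Z$ qualifies as proper and nilpotent with a continuous, strictly decreasing generator. It does: take $s(x)=2-x$, viewed as a map onto $[1,2]$. It is continuous and strictly decreasing, $s(1)=1>0$ so it is proper, and $s(0)=2<\infty$ so it is nilpotent by Proposition \ref{pro2.1}(ii),(iii). Moreover $s(0)\le 2s(1)$, so it generates $Z$ by Proposition \ref{pro2.1}(iv). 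Since $Z$ lies below every t-subnorm, this constant bound is valid.

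We therefore split the inductive step into two cases. If $S'\neq Z$, we apply Theorem \ref{theorem:5.3} directly. If $S'=Z$, we take $S_{l}=Z$ with the generator above. In both cases the output is a proper nilpotent t-subnorm generated by a continuous, strictly decreasing additive generator, which completes the induction.
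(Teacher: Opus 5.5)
Your proof is correct and follows the same route as the paper, which simply says the corollary follows from Theorem~\ref{theorem:5.3} by induction. Your case split for an intermediate bound $S'=Z$ adds a detail the paper skips: Theorem~\ref{theorem:5.3} requires non-constant inputs but can output $Z$, and you handle this by taking $S_l=Z$, which is proper and nilpotent with the continuous, strictly decreasing generator $s(x)=2-x$.
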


Furthermore, the lower bounds for proper t-subnorms with continuous, strictly decreasing additive generators follow simply from Theorem \ref{theorem:5.1}, Theorem \ref{theorem:5.3} and Corollary \ref{coro5.5} as follows.
\begin{proposition}\label{pro5.1}
Let $S_{i}:[0,1]^2\rightarrow[0,1]$, $i=1,2,\ldots,n$, be a finite family of non-constant proper t-subnorms with continuous, strictly decreasing additive generators. Then there exists a proper t-subnorm $S_{l}:[0,1]^2\rightarrow[0,1]$ generated by a continuous, strictly decreasing additive generator such that
$$S_{l}\leq \min(S_{1}, S_{2},\ldots,S_{n}).$$
\end{proposition}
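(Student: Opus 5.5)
We split the finite family $\{S_1,\ldots,S_n\}$ according to Proposition \ref{pro2.1}(iii): every member is either strict or nilpotent. Let $I_s$ be the set of indices of the strict ones and $I_p$ the set of indices of the nilpotent ones.

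If $I_p=\emptyset$, Corollary \ref{coro5.1} already gives a non-constant proper strict lower bound. If $I_s=\emptyset$, Corollary \ref{coro5.5} gives a proper nilpotent lower bound. So the only real work is the mixed case. There we first apply Corollary \ref{coro5.1} to $\{S_i\}_{i\in I_s}$. This yields a proper strict $S'$ with continuous, strictly decreasing generator and $S'\le\min_{i\in I_s}S_i$.

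Next we reduce to the nilpotent situation by finding a proper nilpotent t-subnorm below $S'$. Let $s'_n:[0,1]\to[1,\infty]$ be the normalized generator of $S'$. Take $s'':[0,1]\to[1,\infty)$ with $s''=g\circ s'_n$, where $g:[1,\infty]\to[1,\infty)$ is a continuous, strictly increasing, subadditive map with $g(1)=1$ and $g(\infty)<\infty$. For instance, use $g(u)=3-2/u$, which is concave with $g(0^+)$-extension through the origin direction harmless. More safely, check $g(u)/u$ non-increasing and invoke Lemma \ref{lemm5.1}.

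Then $s''\circ (s'_n)^{-1}=g$ is subadditive, so Remark \ref{remark:4.1}(ii) (equivalently Theorem \ref{theorem:3.1}) gives $S''\le S'$. Since $s''(0)=g(\infty)<\infty$ and $s''(1)=1>0$, the t-subnorm $S''$ is proper and nilpotent. It may equal $Z$, but the statement does not require non-constancy. Also note that $S''\neq Z$ exactly when $g(\infty)>2$; this holds for $g(u)=3-2/u$ because $g(\infty)=3$.

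Finally, apply Theorem \ref{theorem:5.3} or Corollary \ref{coro5.5} to the nilpotent family $\{S''\}\cup\{S_i\}_{i\in I_p}$, all of which are proper and nilpotent. This gives a proper nilpotent $S_l$ below all of them, hence below $S''\le S'\le S_i$ for $i\in I_s$. A subtle point is that Corollary \ref{coro5.5} assumes non-constant inputs, which is why we arrange $g(\infty)>2$.

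The main obstacle is checking that $g$ is subadditive on $[1,\infty]$ in the sense of Theorem \ref{theorem:3.1}, including the endpoint $\infty$ and the truncation at $s''(0)$. For $g(u)=3-2/u$, $g(u)/u=3/u-2/u^2$ is decreasing for $u\ge 4/3$ but not on $[1,4/3]$. So I would verify subadditivity directly: for $u,v\ge1$, $g(u)+g(v)\ge 2 > \ldots$. More precisely, $g(u)+g(v)\ge 2$, and $g(u+v)\le 3$. This crude bound fails, so instead I would use $g(u)=2-1/u$ (then $g(u)+g(v)\ge2\ge g(u+v)$ trivially). That makes $s''(0)=2=2s''(1)$, giving $Z$. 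To retain non-constancy, I would fall back on $g(u)=\min(u,\,3)$ smoothed. Concretely, take $g(u)=u$ on $[1,2]$ and $g(u)=3-2/u$ for $u\ge2$, then check $g(u)+g(v)\ge g(u+v)$ by cases $u+v\le 2$ (impossible unless $u=v=1$, giving $2\ge g(2)=2$) and otherwise $g(u)+g(v)\ge 2+ \ldots$, using $g\ge$ its value at the smaller argument. This casework is where I expect the effort to lie.
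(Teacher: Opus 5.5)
Your argument is correct, and it is more complete than the paper's. The paper disposes of Proposition~\ref{pro5.1} in one line by citing Theorem~\ref{theorem:5.1}, Theorem~\ref{theorem:5.3} and Corollary~\ref{coro5.5}. Those results only treat families that are entirely strict or entirely nilpotent, and no result in the paper produces a common lower bound of a strict and a nilpotent t-subnorm, so mixed families are left implicit.

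Your reduction supplies exactly that missing piece. You push the strict bound $S'$ down to a non-constant proper nilpotent $S''$ via $s''=g\circ s'_n$; then $S''\le S'$ by Theorem~\ref{theorem:3.1}, because $s''\circ(s'_n)^{-1}=g$. Finally you feed $S''$ into Corollary~\ref{coro5.5}.

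The step you left open is routine, and you discarded a valid candidate. The function $g(u)=3-2/u$ is already subadditive: with $a=1/u$, $b=1/v\in(0,1]$ one has $g(u)+g(v)-g(u+v)=3+\frac{2ab}{a+b}-2a-2b$. Its $a$-derivative is $\frac{2b^{2}}{(a+b)^{2}}-2<0$, and symmetrically in $b$, and it vanishes at $a=b=1$, so it is nonnegative. Your final piecewise $g$ also works: $g(u)/u$ equals $1$ on $[1,2]$ and $3/u-2/u^{2}$ on $[2,\infty)$, which is decreasing there, so the argument of Lemma~\ref{lemm5.1} applies. Cleanest of all is $g(u)=3u/(u+2)$, for which $g(u)/u=3/(u+2)$ is visibly decreasing. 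In every case the point $\infty$ is trivial: if $u=\infty$, then $g(u+v)=3=g(u)\le g(u)+g(v)$.

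Be aware, though, that your own observation that non-constancy is not required makes the proposition immediate. The function $s(x)=2-x$ is continuous and strictly decreasing with $s(1)=1>0$ and $s(0)=2s(1)$. So by Proposition~\ref{pro2.1}(ii),(iv) it generates the proper t-subnorm $Z$, which lies below every $S_i$. Moreover, Corollary~\ref{coro5.5} may itself return $Z$ (cf.\ Example~\ref{exp5.2}). The care you take to keep $S''$ non-constant therefore only serves to meet that corollary's hypothesis; it does not strengthen the conclusion.
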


Duality yields a fully analogous result on the upper bounds of t-superconorms generated by continuous, strictly increasing additive generators, as the following corollary.
\begin{corollary}\label{coro5.6}
Let $M_{i}:[0,1]^2\rightarrow[0,1]$, $i=1,2,\ldots,n$, be a finite family of non-constant proper t-superconorms with continuous, strictly increasing additive generators. Then there exists a proper t-superconorm $M_{u}$ generated by a continuous, strictly increasing additive generator such that
$$M_{u}\geq \max(M_{1}, M_{2},\ldots,M_{n}).$$
\end{corollary}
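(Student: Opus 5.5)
The plan is to obtain Corollary \ref{coro5.6} from Proposition \ref{pro5.1} by duality, in the same way Corollaries \ref{coro4.3} and \ref{coro5.2} were derived. First I will set up the duality map $M\mapsto M^{d}$ with $M^{d}(x,y)=1-M(1-x,1-y)$. It sends t-superconorms to t-subnorms and back, since commutativity, associativity and monotonicity are preserved, and $M\geq\max$ turns into $M^{d}\leq\min$. It also reverses order: $M\leq M'$ if and only if $M^{d}\geq M'^{d}$. It preserves properness, because $M(0,0)>0$ corresponds to $M^{d}(1,1)<1$. Finally, $M=Z^{d}$ (the constant $1$) if and only if $M^{d}=Z$, so non-constancy is preserved as well.

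Next comes the correspondence of generators. For a t-superconorm $M$ with a continuous, strictly increasing additive generator $m$ we have $M(x,y)=m^{(-1)}(m(x)+m(y))$, where the pseudo-inverse of the increasing function $m$ is $m^{(-1)}(y)=\inf\{x\mid m(x)>y\}$. Equivalently, $M(x,y)=m^{-1}(\min(m(1),m(x)+m(y)))$. Put $s(x)=m(1-x)$. Then $s$ is continuous and strictly decreasing, and
\[
1-M(1-x,1-y)=1-m^{-1}(\min(m(1),s(x)+s(y)))=s^{-1}(\min(s(0),s(x)+s(y))).
\]
So $M^{d}$ is generated by $s$ in the sense of Eq.~\eqref{eq:1}. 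The same computation run backwards shows that if a t-subnorm has a continuous, strictly decreasing generator $s$, then its dual has the continuous, strictly increasing generator $m(x)=s(1-x)$.

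With this in place, I apply Proposition \ref{pro5.1} to the family $M_{1}^{d},\ldots,M_{n}^{d}$. These are non-constant proper t-subnorms with continuous, strictly decreasing additive generators, so the proposition gives a proper t-subnorm $S_{l}$ with such a generator and $S_{l}\leq\min_{i}M_{i}^{d}$. Set $M_{u}=S_{l}^{d}$. By order reversal, $M_{u}\geq\max_{i}M_{i}$. Properness transfers through the duality, and $M_{u}$ is generated by the continuous, strictly increasing generator $x\mapsto s(1-x)$.

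The only step that needs care is the generator correspondence, in particular checking that the definition of the superconorm pseudo-inverse matches the min-truncation under the substitution $x\mapsto1-x$. Once that is verified, everything else is formal.
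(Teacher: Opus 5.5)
Your proposal is correct and follows the paper's route: the paper obtains Corollary~\ref{coro5.6} from Proposition~\ref{pro5.1} purely by the duality $M\mapsto 1-M(1-x,1-y)$. Your explicit check that $s(x)=m(1-x)$ carries the min-truncated pseudo-inverse of an increasing generator to that of a decreasing one fills in the one detail the paper leaves unstated. Incidentally, neither Proposition~\ref{pro5.1} nor this corollary requires the bound to be non-constant, so the constant-one superconorm, which is proper and generated for example by $m(x)=1+x$, already satisfies the claim.
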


\section{Conclusions}
 This article paid attention to the upper and lower bounds of t-subnorms generated by continuous, strictly decreasing additive generators. It not only established necessary and sufficient conditions for the comparison of two t-subnorms through their additive generators but also demonstrated the existence of strict (resp. nilpotent) upper and lower bounds of a finite family of strict (resp. nilpotent) t-subnorms generated by continuous, strictly decreasing additive generators. By duality, completely analogous results are derived for the strict (resp. nilpotent) t-superconorms generated by continuous, strictly increasing additive generators. In addition, this article provided the corresponding algorithms for computing those bounds, which offer practical tools for both theoretical analysis and applications in fields such as fuzzy logic and information aggregation. It is worth noting that our results are suitable for t-subnorms (resp. t-superconorms) generated by continuous, strictly decreasing (resp. increasing) additive generators. A more general problem is how to find the upper and lower bounds of t-subnorms (resp. t-superconorms) within class of t-subnorms (resp. t-superconorms) without such restrictions on additive generators.

%\section*{Acknowledgments}
%The authors would like to thank the anonymous referees for their valuable comments and suggestions.

\end{document}